\documentclass[a4paper,fleqn]{cas-sc}

\usepackage[authoryear,longnamesfirst]{natbib}

\usepackage{lipsum}
\usepackage{amsfonts}
\usepackage{graphicx}
\usepackage{epstopdf}
\usepackage{algorithm}
\usepackage{bm}

\usepackage{amsthm}
\newtheorem{definition}{Definition}
\newtheorem{mytheorem}{Theorem}
\newtheorem{lemma}{Lemma}
\newtheorem{corollary}{Corollary}

\usepackage{tikz}
\usetikzlibrary{matrix}
\usepackage{nicematrix, tikz}

\usepackage{todonotes}

\def\bbR{{\mathbb R}}
\def\BM{{\mathrm{BM}}_{(\ell,m)}^{n \times n}}
\def\SBM{{\mathrm{SBM}}_{\ell}^{n \times n}}
\def\BPS{{\mathrm{BPS}}_{(r,p),(\ell,m)}^{n \times n}}
\def\SBPS{{\mathrm{SBPS}}_{r,\ell}^{n \times n}}
\def\SBPS{{\mathrm{SBPS}}_{r,\ell}^{n \times n}}
\def\calP{{\mathcal P}}

\def\calH{{\mathcal H}}

\def\calS{{\mathcal S}}
\def\bfx{{\mathbf x}}
\def\bfy{{\mathbf y}}
\def\bfe{{\mathbf e}}
\def\bfb{{\mathbf b}}

\def\bfs{{\mathbf s}}
\def\bfv{{\mathbf v}}

\def\bfw{{\mathbf w}}

\def\bft{{\mathbf t}}
\def\bfr{{\mathbf r}}
\def\bfk{{\mathbf k}}
\def\bftau{{\bm\tau}}
\def\bfzero{{\bm0}}

\makeatletter
\@addtoreset{algorithm}{subsection}
\makeatother
\usepackage{algpseudocode}
\newtheorem{remark}{Remark}[section]

\usepackage{amsopn}
\DeclareMathOperator{\diag}{diag}
\DeclareMathOperator{\triu}{triu}
\DeclareMathOperator{\tril}{tril}

\ifpdf
  \DeclareGraphicsExtensions{.eps,.pdf,.png,.jpg}
\else
  \DeclareGraphicsExtensions{.eps}
\fi

\def\tsc#1{\csdef{#1}{\textsc{\lowercase{#1}}\xspace}}
\tsc{WGM}
\tsc{QE}
\tsc{EP}
\tsc{PMS}
\tsc{BEC}
\tsc{DE}

\begin{document}
\let\WriteBookmarks\relax
\def\floatpagepagefraction{1}
\def\textpagefraction{.001}

\shorttitle{QR for Banded-Plus-Semiseparable Matrices}

\shortauthors{T. Chen and S. Olver}

\title [mode = title]{The QR factorization of a banded-plus-semiseparable matrix is computable in linear complexity with Householder reflections}                      
\tnotemark[1,2]


\author[1]{Tao Chen}
\cormark[1] 
\ead{t.chen24@imperial.ac.uk}

\author[1]{ Sheehan Olver} 
\ead{s.olver@imperial.ac.uk}

\affiliation[1]{organization={Department of Mathematics, Imperial College London},
    addressline={South Kensington Campus}, 
    city={London},
    postcode={SW7 2AZ}, 
    country={United Kingdom}}

\cortext[cor1]{Corresponding author}

\begin{abstract}
We show that the QR factorization of a banded-plus-semiseparable (BPS) matrix is computable in optimal linear complexity with respect to the discretization size by showing that the intermediate stages of a QR factorization as computed using Householder reflection maintain a specific structure which has optimal storage.  While optimal complexity QR factorizations of BPS matrices are known via Given's rotations, our Householder-based framework enables adaptive, partial QR factorizations whose computational cost depends solely on the number of upper-triangularized columns rather than the global matrix size and represents the factors in a format consistent with LAPACK. This allows users to switch to highly optimized dense matrix operations for the final solve, creating a fast hybrid approach. We further deduce that the Compact WY format also has BPS structure and can be computed in optimal complexity.  Finally, for symmetric BPS matrices, we show that the reverse $RQ$ product preserves the BPS structure, establishing the algebraic closure of this matrix class under orthogonal similarity transformations and facilitating efficient chain-structured matrix decompositions. Numerical experiments validate the optimal linear complexity, confirm high numerical accuracy, demonstrate excellent computational efficiency via LAPACK integration, and show substantial speedups compared with existing hierarchical approaches. The algorithms have been implemented in an open-source Julia package, providing an efficient and accessible platform for practical use. 
\end{abstract}


\begin{highlights}
\item We propose a fast QR factorization algorithm for banded-plus-semiseparable matrices using Householder transformations.
\item The proposed algorithm achieves a strict linear computational complexity and effectively preserves the underlying matrix structures throughout the process.
\item At each stage the partial QR factorization has a precise structured representation to represent the Householder factors in a way that is compatible with LAPack's unblocked QR routine.
\item The Compact WY format also bases banded-plus-semiseparable structure that can be computed in optimal complexity.
\item A high-performance, open-source Julia package is developed and validated through extensive numerical benchmarks.

\end{highlights}

\begin{keywords}
banded-plus-semiseparable matrices \sep QR factorization \sep linear complexity \sep structured matrices \sep direct solvers
\end{keywords}

\maketitle

\section{Introduction}

Banded-plus-semiseparable (BPS) matrices, expressible as
\begin{equation*}
A = \underbrace{B}_{\text{banded}} + \underbrace{\tril(UV^\top, -1)}_{\text{lower semiseparable rank } r} + \underbrace{\triu(WS^\top, 1)}_{\text{upper semiseparable rank } p} \in \bbR^{n \times n},
\end{equation*}
where $U ,V \in \bbR^{n \times r}$ and $W,S \in \bbR^{n\times p}$
 arise in numerous applications; from spectral methods for differential equations~\citep{iserles2025stable} to signal processing, control theory, and eigenvalue problems~\citep{vandebril2005note}. Importantly, the inverse of a banded matrix is itself a BPS matrix, making this structure fundamental for representing inverses of banded matrices~\citep{rozsa1991band}. Their structure requires only $O(n)$ storage as $n \rightarrow \infty$, which invites the development of \( O(n) \) algorithms.

Pioneering work has established stable $O(n)$ direct solvers for various classes of BPS and related rank-structured matrices. In particular, a mature and extensive literature has leveraged sequences of Givens rotations to maintain these compact representations in QR-based linear solvers. The foundational theory and rotational ordering patterns for generic rank-structured systems are comprehensively detailed in the seminal books of~\cite{vandebril2008matrix} and~\cite{eidelman2014separable}, both of which provide exhaustive analyses of fast solvers, including QR and LU-type factorizations that adapt to BPS forms, see  \cite[p377]{vandebril2008matrix}. The \texttt{SSPACK} library by \cite{vandebrilmanual} delivers a robust Matlab framework for working with semiseparable and related matrices, though only supports QR-based solves for diagonal-plus-semiseparable matrices (see \cite{eidelman1997inversion} for details of this special case), not the full class of BPS matrices.

Beyond these, alternative algorithmic frameworks include ULV factorizations~\citep{chandrasekaran2003fast}, hierarchically semiseparable (HSS) matrix techniques~\citep{chandrasekaran2007fast, chandrasekaran2006fast, massei2020hm, xia2010fast}, and rational Krylov approaches~\citep{fasino2005rational, vandebril2008rational}. Structure-preserving properties under rotational QR steps have also been extensively explored~\citep{vandebril2005implicit,delvaux2006rank, delvaux2006structures, eidelman2005qr, delvaux2008givens, vandebril2005note}.

Crucially, however, almost all existing $O(n)$ QR factorization schemes for BPS matrices in the rank-structured literature are fundamentally anchored in Givens rotations~\citep{delvaux2008qr, mastronardi2001fast, van2004two}. While rotational approaches are well-developed, they rely on localized transformations that do not yield the standard Householder factor matrix layout native to LAPACK storage formats and modern dense computing pipelines; instead, the resulting orthogonal matrix in the QR factorization is represented as a product of Givens rotations. Furthermore, in order to preserve structure Givens-based approaches typically eliminate subdiagonal elements starting from the bottom-left corner of the matrix, rendering them ill-suited for adaptive precision or dynamically truncated computations.

A special class of BPS matrices with no lower semiseparable part ($r = 0$) are almost banded matrices, which were used by~\cite{olver2013fast} to represent infinite-dimensinoal discretisations of differential equations using the ultraspherical spectral method, accompanied by an optimal complexity adaptive QR factorization with error control and back-substitution solver. To go beyond simple ordinary differential equations with (approximately) polynomial variable coefficients one needs to consider more general BPS matrices. This motivates the question: can we generalize the adaptive QR factorization to general BPS matrices? Because they require beginning the factorization in the bottom left, the existing Givens-based approaches do not lend themselves to the infinite-dimensional linear algebra setting.

In this paper we answer this question in the positive by developing  a Householder-based approach that naturally allows for adaptive, partial QR factorizations. Rather than building a customised  QR factorization, we use the standard QR factorization with Householder reflections, but where the structure at each stage is captured precisely. 
Beyond adaptivity, our Householder-based framework seamlessly interfaces with LAPACK as it represents the same factors matrix as LAPACK in an unblocked QR factorization, but where the factors matrix now has a special memory-efficient storage.

In detail: we prove that the QR factorization of a BPS matrix via Householder reflections yields a full factor matrix $F$ (encoding both $R$ and the reflectors) that is itself BPS. The core of our idea lies in characterizing the precise structural evolution during the QR factorization process. Although the BPS structure is not strictly invariant under individual Householder transformations, we demonstrate that the intermediate matrices remain within a well-defined space of structured perturbations, denoted as $\mathcal{P}(A)$ (Definition~\ref{def:P}). The fundamental insight is that each Householder reflection preserves this extended structure (Lemma~\ref{Lemma:Structure_preserve}), ensuring that the final factor matrix $F$ maintains a compact representation (Theorem~\ref{thm:structure_preserve}). By leveraging this structural inheritance, we design a complete $O(n)$ direct solver—including $Q^\top$ application and backward substitution—implemented in a high-performance, open-source Julia package. Furthermore, we extend this framework to symmetric BPS matrices, proving that the $RQ$ product also retains the BPS structure.

The rest of this paper is organized as follows.
Section~\ref{sec:factors} discusses the factor matrix representation for the QR factorization as computed with Householder reflections, which proves a convenient language to discuss the perturbations of the intermediate stages of the factorization.
Section~\ref{sec:main} presents our main theoretical contributions on the preservation of a specific perturbation structure and the resulting BPS structure of the final QR factorization. Section~\ref{sec:algorithms} details the practical implementation of $O(n)$ algorithms for QR factorization, application of $Q^\top$, and backward substitution. Section~\ref{sec:experiments} presents numerical experiments that confirm the linear complexity, verify the numerical stability, and demonstrate performance advantages of our algorithms, including their compatibility with LAPACK. We conclude in Section~\ref{sec:conclusions} with a discussion of future work. In Appendix~\ref{sec:bps_T} we show that the  WY representation~\citep{schreiber1987storage}, $Q = I - V T V^\top$, also has BPS structure that can be computed in optimal complexity. Finally, Appendix~\ref{sec:fastrq} extends this framework to symmetric BPS matrices, proving that the reverse $RQ$ product also preserves the BPS format.

\begin{remark}
While this paper focuses on real square matrices, the techniques and results naturally extend to complex matrices by replacing transposes with conjugate transposes. The extension to rectangular matrices is also straightforward, as Householder QR factorization applies similarly to rectangular matrices, and the structural arguments carry over with minor adjustments. 
\end{remark}

\begin{remark}
Modern blocked LAPACK routines use the Compact WY representation~\citep{schreiber1987storage}, $Q = I - V T V^\top$, to leverage BLAS-3 operations. When block-accumulating $k$ successive Householder reflections, the upper-triangular matrix $T$ inherently inherits the BPS structure due to the underlying rank structure of the Householder vectors in $V$ (as proved in Appendix~\ref{sec:bps_T}). Crucially, by exploiting the low-rank generators of $V$, each step of LAPACK's \texttt{DLARFT} recursion requires only $O(1)$ operations, allowing us to build the block factor $T \in \mathbb{R}^{k \times k}$ in $O(k)$ complexity, completely independent of $n$. This guarantees that forming the Compact WY representation retains strict $O(n)$ linear complexity with respect to the matrix dimension $n$. Consequently, this algebraic compatibility bridges our structured factorization with high-performance dense BLAS-3 operations without compromising asymptotic efficiency.
\end{remark} 

\begin{remark}
Our approach requires an initial $O(n)$ precomputation in the form of a lookup table of partial inner products, see Remark~\ref{Remark:lookup}. 
This can be compared to the Givens-based approaches which require an initial $O(n)$ cost to eliminate the lower semiseparable part of the matrix. A benefit of the Householder-based approach is that when the semiseparable factors have specific structured forms---for example $U = [1, 1/2, 1/3, \dots, 1/n]^\top$---the precomputation can be done in closed form and the computational complexity of a partial QR factorization then depends solely on the number of columns upper-triangularized, remaining entirely independent of the global system size $n$. This parameter-independent complexity property is particularly vital in  infinite-dimensional spectral theory and resolvent-based eigenvalue analysis, as pioneered by~\cite{colbrook2021computing}. 
\end{remark}

\section{The Representation of the QR Factor Matrix}
\label{sec:factors}

Consider the computation of the QR factorization using Householder reflections which we can write as:
\[
A =\underbrace{ (I - \tau_1 \bfy_1 \bfy_1^\top)\cdots (I - \tau_{n-1} \bfy_{n-1} \bfy_{n-1}^\top)}_Q \underbrace{\begin{bmatrix}
r_{11} & r_{12} & r_{13} & \cdots & r_{1n} \\
 & r_{22} & r_{23} & \cdots & r_{2n} \\
 &  & r_{33} & \cdots & r_{3n} \\
 &  &  & \ddots & \vdots \\
 &  &  &  & r_{nn}
\end{bmatrix}}_R
\]
where
\begin{equation}\label{y_k}
\bfy_k = \begin{bmatrix} 0 \\ \vdots \\ 0 \\ 1 \\ y_{k+1,k} \\ \vdots \\ y_{n,k} \end{bmatrix}
\begin{array}{l} \left.\vphantom{\begin{bmatrix} 0 \\ 0 \\ 1 \end{bmatrix}}\right\} k-1 \text{ zeros} \\ \\ \left.\vphantom{\begin{bmatrix} y_{k+1,k} \\ \vdots \\ y_{n,k} \end{bmatrix}}\right\} n-k \text{ elements}
\end{array}
\end{equation}
and $\tau_k = 2\|\bfy_k\|^{-2}$ so that $\sqrt{\tau_k/2} \bfy_k$ has unit norm.
Following the LAPACK format \citep{doi:10.1137/1.9780898719604}, the QR factorization can be represented by a {\it factor matrix} containing the entries of $\bfy_k$ and $R$:
\begin{equation*}
F = \begin{bNiceMatrix}[cell-space-limits=10pt,columns-width=20pt]
r_{11}      & r_{12} & \cdots       & r_{1k}  \\
y_{21}      & r_{22} & \cdots       & r_{2k}  \\
\vdots      & \ddots & \ddots       & \vdots  \\
y_{k1}      & \cdots & y_{k,k-1}    & r_{kk}  \\
\CodeAfter
  \tikz \draw ([xshift=2pt,yshift=0pt]1.5-|1) -- ([xshift=6pt,yshift=0pt]5-|4.5);
\end{bNiceMatrix}
\end{equation*}
alongside a vector $\bftau = [\tau_1,\ldots,\tau_{n-1},0]^\top$.

We will demonstrate that \( F \) itself retains a BPS structure, which implies structure in $R$ and the Householder reflections. Specifically, $F$ has a lower semiseparable part of rank \( r \), an upper semiseparable part of rank \( r+p \), a lower bandwidth of \( \ell \), and an upper bandwidth of \( \ell+m \).

Intermediate stages of the Householder reflection correspond to a {\it partial factor matrix}. In particular, if we apply $k$ Householder reflections to partially upper triangularize $A$ as
\[
(I - \tau_k \bfy_k \bfy_k^\top) \cdots (I - \tau_1 \bfy_1 \bfy_1^\top) A =
\begin{bNiceMatrix}[cell-space-limits=5pt]
r_{11} & r_{12} & \cdots & r_{1k} & r_{1,k+1} & \cdots & r_{1n} \\
       & r_{22} & \cdots & r_{2k} & r_{2,k+1} & \cdots & r_{2n} \\
       &        & \ddots & \vdots & \vdots    & \ddots & \vdots \\
       &        &        & r_{kk} & r_{k,k+1} & \cdots & r_{kn} \\
       &        &        &        & a_{k+1,k+1}^k & \cdots & a_{k+1,n}^k \\
       &        &        &        & \vdots        & \ddots & \vdots \\
       &        &        &        & a_{n,k+1}^k   & \cdots & a_{nn}^k
\CodeAfter
  \tikz \draw ([xshift=2pt,yshift=-2pt]1.5-|1) -- ([xshift=2pt,yshift=-2pt]5-|4.5);
    \tikz \draw ([xshift=2pt,yshift=-2pt]5-|4.5) -- ([xshift=-2pt,yshift=-2pt]5-|8);
        \tikz \draw ([xshift=0pt,yshift=-2pt]5-|5) -- (8-|5);
\end{bNiceMatrix}.
\]
We can encode this partial factorization in a partial factor matrix:
\[
F_k = \begin{bNiceMatrix}[cell-space-limits=10pt,columns-width=20pt]
r_{11}      & r_{12} & \cdots       & r_{1k} & r_{1,k+1} & \cdots & r_{1n} \\
y_{21}      & r_{22} & \cdots       & r_{2k} & r_{2,k+1} & \cdots & r_{2n} \\
\vdots      & \ddots & \ddots       & \vdots & \vdots    & \ddots & \vdots \\
y_{k1}      & \cdots & y_{k,k-1}    & r_{kk} & r_{k,k+1} & \cdots & r_{kn} \\
y_{k+1,1}   & \cdots &  y_{k+1,k-1} &   y_{k+1,k}     & a_{k+1,k+1}^k & \cdots & a_{k+1,n}^k \\
\vdots      & \ddots &    \vdots    &  \vdots      & \vdots        & \ddots & \vdots \\
y_{n1}      & \cdots &    y_{n,k-1} &  y_{nk}      & a_{n,k+1}^k   & \cdots & a_{nn}^k
\CodeAfter
  \tikz \draw ([xshift=2pt,yshift=2pt]1.5-|1) -- ([xshift=8pt,yshift=0pt]5-|4.5);
    \tikz \draw ([xshift=8pt,yshift=0pt]5-|4.5) -- ([xshift=6pt,yshift=0pt]5-|8.5);
        \tikz \draw ([xshift=6pt,yshift=0pt]5-|5) -- ([xshift=6pt,yshift=0pt]8-|5);
\end{bNiceMatrix},
\]
alongside $\bftau_k = [\tau_1,\ldots,\tau_k,0,\ldots,0]^\top$, for $k = 1, \dots, n-1$.

Specifically, we define $F_0 := A$ as the initial state. Although the upper triangularization is computationally complete after $n-1$ steps, we formally define $F_n:=F$ as the terminal state where the residual perturbation in the bottom-right $1 \times 1$ block of $F_{n-1}$—which at this stage is still implicitly represented within the structured perturbation format (to be explicitly defined in Section \ref{sec:main})—is formally absorbed into the final diagonal entry $r_{nn}$. Consequently, the upper triangular part of $F_n$ constitutes the full factor $R$, while its strictly lower triangular part stores the Householder vectors $\{\mathbf{y}_k\}_{k=1}^{n-1}$.


The intermediate factor matrices {\it do not} have BPS structure, however, we shall show that it is a structured perturbation of BPS.

\section{Structure of the factor matrix for BPS matrices}
\label{sec:main}

We now turn our attention to understanding the structure that $F_k$ exhibits.  In particular, we claim it is a structured perturbation of a BPS matrix. To be precise, we first introduce a notation for BPS matrices:

\begin{definition}
Denote the set of {\it banded matrices}  with lower-bandwidth $\ell$, and upper-bandwidth $m$  as $\BM \subset \bbR^{n \times n}$. In particular, $B \in \BM \subset \bbR^{n \times n}$ if
 \( B = (b_{kj})_{k,j=1}^n \in \mathbb{R}^{n \times n} \) is a banded matrix satisfying \( b_{kj}=0 \) for \( k-j>\ell \) or \( j-k>m \).
\end{definition}

\begin{definition}
Denote the set of {\it banded-plus-semiseparable matrices} which have lower-semiseparable rank $r$, upper-semiseparable rank $p$, lower-bandwidth $\ell$, and upper-bandwidth $m$  as $\BPS \subset \bbR^{n \times n}$. In particular, $A \in \BPS \subset \bbR^{n \times n}$ if
\[
A = B + \tril(UV^\top, -1) + \triu(WS^\top, 1)\label{express_A}
\]
for \( U, V \in \mathbb{R}^{n\times r} \), \( W, S \in \mathbb{R}^{n\times p} \), and \( B   \in \BM \).
\end{definition}

We note that the BPS structure is not uniquely defined, so we implicitly assume that if we know $A \in \BPS$ we have access to $U,V,W,S$. This structure is preserved under principal subsection:
\begin{align*}
A[k\!:\!n, k\!:\!n] &= B[k\!:\!n, k\!:\!n]
+
\tril(U[k\!:\!n, :]V[k\!:\!n, :]^\top,-1) \\
&\qquad +
\triu(W[k\!:\!n, :]S[k\!:\!n, :]^\top,1) \in {\mathrm{BPS}}_{(r,p),(\ell,m)}^{n-k+1 \times n-k+1}.
\end{align*}

We will also need to work with padded vectors or matrices:

\begin{definition}
Denote a {\it padded vector} with only the first $p$ rows being possibly nonzero as $\calS_p^n \subset \bbR^n$ and a {\it padded matrix} with only the principal $p \times q$ subblock being possibly nonzero as $\calS_{p \times q}^{m \times n} \subset \bbR^{m \times n}$. Explicitly, $\bfx \in \calS_p^n$ if $\bfx[k] = 0$ unless $k \leq p$ and $A \in \calS_{p \times q}^{m \times n}$ if $A[k,j] = 0$ unless $k \leq p$ and $j \leq q$.
\end{definition}

Note for example if $B \in \BM$ then $B[k\!:\!n,k] \in \calS_{\ell+1}^{n-k+1}$ and $B[k,k\!:\!n] \in \calS_{m+1}^{n-k+1}$.

We will now describe the structure of each portion of the partial factor matrix. We will describe the bottom right in terms of the following linear space of matrices:

\begin{definition} \label{def:P}
Given
\[
A = B + \tril(UV^\top, -1) + \triu(WS^\top, 1)  \in \BPS
\]
define the linear space:
\begin{align*}
\calP(A) &:= \bigg\{
UJS^\top + UKU^\top A + UE + XS^\top + YU^\top A + Z :\\
&\qquad\quad  J \in \mathbb{R}^{r\times p}, K \in \mathbb{R}^{r\times r},  E \in \calS_{r\times (\ell+m)}^{r \times n},  X \in \calS_{\ell \times p}^{n \times p}, \\
 &\qquad\quad  Y \in \calS_{\ell \times r}^{n \times r}, Z \in \calS_{\ell \times (\ell+m)}^{n \times n}
\bigg\} \subset \bbR^{n \times n}.
\end{align*}
\end{definition}

For convenience we use subscript $s$ to define the nonzero portions of these matrices as $E_s:=E[:,1:\min(\ell+m,n)]$, $X_s:=X[1:\min(\ell,n),:]$, $Y_s:=Y[1:\min(\ell,n),:]$, and $Z_s:=Z[1:\min(\ell,n),1:\min(\ell+m,n)]$.

We will show that $F_k$ has the following structure:
The first $k$ rows/columns of $F_k$ will maintain the BPS structure albeit with increased upper semiseparable rank of $p+r$ and upper bandwidth of $\ell+m$. The bottom right will be a structured perturbation of a BPS matrix.

In particular,
the partial factor matrices will satisfy the following:

\begin{definition}
We say $F_k \in {\rm BPSF}_k(A)$ if for
\[
\tilde S :=  \begin{bmatrix} S & A^\top U
\end{bmatrix} \in \bbR^{n \times (r+p)},
\]
 there exist 
 $B_k \in {\mathrm{BM}}_{(\ell,\ell+m)}^{n \times n}$,
$V_k \in \bbR^{n \times r}$, $W_k \in \bbR^{n \times (r+p)}$, 
so that the first $k$ rows and columns satisfy the conditions of the BPS format:
\begin{align*}
F_k[:,1\!:\!k] &= (B_k + \tril(U V_k^\top, -1) + \triu(W_k \tilde S^\top,1))[:,1\!:\!k] \\
F_k[1\!:\!k,:] &= (B_k + \tril(U V_k^\top, -1) + \triu(W_k \tilde S^\top,1))[1\!:\!k,:]
\end{align*}
whilst the bottom right block is a perturbed BPS matrix:
\begin{align*}
F_k[k\!+\!1\!:\!n,k\!+\!1\!:\!n] &= A[k+1\!:\!n,k+1\!:\!n] + P_k,
\end{align*}
where $P_k \in \calP(A[k+1\!:\!n,k+1\!:\!n])$.
\end{definition}

We will show that the factor matrix $F_k \in {\rm BPSF}_k(A)$ by induction.
As a preliminary step we note that the perturbation structure of the bottom right is preserved under truncation:

\begin{lemma}\label{Lemma:Ppertub}
    If $P \in \calP(A)$ then $P[k\!:\!n,k\!:\!n] \in \calP(A[k\!:\!n,k\!:\!n])$.
\end{lemma}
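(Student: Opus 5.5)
The plan is to truncate each of the six terms in the definition of $\calP(A)$ separately. Since $\calP(A[k\!:\!n,k\!:\!n])$ is a linear space, it then suffices to show that each truncated term lies in it. Write $\hat U := U[k\!:\!n,:]$, $\hat S := S[k\!:\!n,:]$ and $\hat A := A[k\!:\!n,k\!:\!n]$. Recall that $\hat A$ has BPS generators $\hat U,V[k\!:\!n,:],W[k\!:\!n,:],\hat S$ and band part $B[k\!:\!n,k\!:\!n]$, so $\calP(\hat A)$ is built from $\hat U$, $\hat S$, $\hat A$ and padded blocks of the same sizes $\ell$, $\ell+m$.

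\textbf{The easy terms.} Four of the six terms truncate directly.
\begin{itemize}
\item $(UJS^\top)[k\!:\!n,k\!:\!n]=\hat U J\hat S^\top$.
\item $(UE)[k\!:\!n,k\!:\!n]=\hat U\,E[:,k\!:\!n]$. The nonzero columns of $E[:,k\!:\!n]$ are a subset of the first $\max(\ell+m-k+1,0)\le \ell+m$ columns, so $E[:,k\!:\!n]\in\calS_{r\times(\ell+m)}^{r\times(n-k+1)}$.
\item $(XS^\top)[k\!:\!n,k\!:\!n]=X[k\!:\!n,:]\hat S^\top$, and $X[k\!:\!n,:]$ is supported in its first $\max(\ell-k+1,0)\le\ell$ rows.
\item $Z[k\!:\!n,k\!:\!n]$ keeps its support inside the leading $\ell\times(\ell+m)$ block.
\end{itemize}

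\textbf{The terms involving $U^\top A$.} These are the only real content. I will split
\[
(U^\top A)[:,k\!:\!n] = U[1\!:\!k-1,:]^\top A[1\!:\!k-1,k\!:\!n] + \hat U^\top \hat A .
\]
The key observation is the structure of the off-diagonal block $A[1\!:\!k-1,k\!:\!n]$. Every entry $(i,j)$ there has $j>i$, so the lower semiseparable part contributes nothing. The upper semiseparable part contributes exactly $W[1\!:\!k-1,:]\hat S^\top$. The band part $B[1\!:\!k-1,k\!:\!n]$ is nonzero only in its first $m$ columns. Hence
\[
U[1\!:\!k-1,:]^\top A[1\!:\!k-1,k\!:\!n] = C\hat S^\top + E',
\]
where $C:=U[1\!:\!k-1,:]^\top W[1\!:\!k-1,:]\in\bbR^{r\times p}$ and $E'\in\calS_{r\times m}^{r\times(n-k+1)}\subset\calS_{r\times(\ell+m)}^{r\times(n-k+1)}$.

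With this splitting, the truncation of $UKU^\top A$ becomes
\[
\hat U(KC)\hat S^\top+\hat UK\hat U^\top\hat A+\hat U(KE').
\]
These three pieces are absorbed into the $J$, $K$ and $E$ slots respectively.

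Likewise, $(YU^\top A)[k\!:\!n,k\!:\!n]=\hat Y C\hat S^\top+\hat Y\hat U^\top\hat A+\hat YE'$, with $\hat Y:=Y[k\!:\!n,:]$ supported in its first $\ell$ rows. Here $\hat YC$ is an admissible $X$, $\hat Y$ is an admissible $Y$, and $\hat YE'$ is supported in the leading $\ell\times m$ block, so it is an admissible $Z$.

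\textbf{Conclusion and main obstacle.} Summing all contributions gives an element of $\calP(\hat A)$ with new parameters $J+KC+\hat{}$ (i.e.\ $KC$ added to $J$), $K$, $E[:,k\!:\!n]+KE'$, $X[k\!:\!n,:]+\hat YC$, $\hat Y$, and $Z[k\!:\!n,k\!:\!n]+\hat YE'$. The main (though mild) obstacle is the identification of $A[1\!:\!k-1,k\!:\!n]$ as "rank-$p$ times $\hat S^\top$ plus a block in the first $m$ columns." This is exactly why $\calP$ carries the $UJS^\top$ and $XS^\top$ terms and the widened column support $\ell+m$. One should also check the edge cases $k>\ell$ or $k>\ell+m$, where the padded blocks simply vanish, and the case $n-k+1<\ell+m$, handled by the $\min$ in the definition.
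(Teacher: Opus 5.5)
Your proof is correct and takes essentially the same route as the paper. You split $(U^\top A)[:,k\!:\!n]$ into $\hat U^\top \hat A$ plus a head piece. Because $A[1\!:\!k-1,k\!:\!n]$ lies strictly above the diagonal, that head piece is an $r\times p$ matrix times $\hat S^\top$ plus a band term supported in the first $m$ columns, which you absorb into the $J$, $E$, $X$, $Z$ slots. The only difference is that the paper peels off the single row $U[1,:]$ (the case $k=2$) and appeals to induction, while you peel off the whole block $U[1\!:\!k-1,:]$ at once; this avoids the induction and is otherwise identical.
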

\begin{proof}
    We show for $k=2$ with the other cases following by induction. We write:
\begin{align*}
    P[2\!:\!n,2\!:\!n] &= U[2\!:\!n,:] J S[2\!:\!n,:]^\top + 
    U[2\!:\!n,:] K \underbrace{U^\top A[:,2\!:\!n]}_{=U[2\!:\!n,:]^\top A[2\!:\!n,2\!:\!n] + U[1,:] A[1,2\!:\!n]^\top}\\
    &\qquad +
    U[2\!:\!n,:] E[:,2\!:\!n] + X[2\!:\!n,:] S[2\!:\!n,:]^\top \\
    \\
    &\qquad + Y[2\!:\!n,:]U^\top A[:,2\!:\!n] + Z[2\!:\!n,2\!:\!n].
\end{align*}
We can then write
\begin{align*}
U[2\!:\!n,:]K U[1,:] A[1,2\!:\!n]^\top &= U[2\!:\!n,:] \underbrace{K U[1,:] B[1,2\!:\!n]^\top}_{\in \calS_{r \times m}^{r \times (n-1)}} \\
&\qquad + U[2\!:\!n,:] \underbrace{K U[1,:]W[1,:]^\top}_{\in \bbR^{r \times p}} S[2\!:\!n,:]^\top.
\end{align*}
Similarly,
\begin{align*}
Y[2\!:\!n,:] U[1,:] A[1,2\!:\!n]^\top &= \underbrace{Y[2\!:\!n,:] U[1,:] B[1,2\!:\!n]^\top}_{\in \calS_{(\ell-1) \times m}^{(n-1) \times (n-1)}} \\
&\qquad +  \underbrace{Y[2\!:\!n,:] U[1,:]W[1,:]^\top}_{\in \calS_{(\ell-1) \times p}^{(n-1) \times p}} S[2\!:\!n,:]^\top.
\end{align*}
The lemma follows since we have
\begin{align*}
P[2\!:\!n,2\!:\!n] &= U[2\!:\!n,:] (J + K U[1,:] W[1,:]^\top) S[2\!:\!n,:]^\top + \\
&\qquad U[2\!:\!n,:] K U[2\!:\!n,:]^\top A[2\!:\!n,2\!:\!n] \\
&\qquad + U[2\!:\!n,:](E[:,2\!:\!n] + K U[1,:] B[1,2\!:\!n]^\top) \\
&\qquad + (X[2\!:\!n,:] + Y[2\!:\!n,:]U[1,:]W[1,:]^\top)S[2\!:\!n,:]^\top \\
&\qquad + Y[2\!:\!n,:]U[2\!:\!n,:]^\top A[2\!:\!n,2\!:\!n] \\
&\qquad + Z[2\!:\!n,2\!:\!n] + Y[2\!:\!n,:] U[1,:]B[1,2\!:\!n]^\top,
\end{align*}
which is precisely the required form.
\end{proof}

The next step is to understand the structure of the Householder reflector, which corresponds to rows $k+1$ through $n$ of
the $k$-th column of $F_k$:

\begin{lemma} \label{lemma:lower}
    For $1\leq k \leq n-1$, suppose $F_{k-1}  \in {\rm BPSF}_{k-1}(A)$. Then there exists $B_k$ and $V_k$ such that
\[
F_k[:,1\!:\!k] = (B_k + \tril(U V_k^\top, -1) + \triu(W_{k-1} \tilde S^\top,1))[:,1\!:\!k]
\]
\end{lemma}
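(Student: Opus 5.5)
The plan is to reduce everything to a statement about column $k$ of $F_k$. The first $k-1$ columns of $F_k$ coincide with those of $F_{k-1}$, because step $k$ only modifies rows $k\!:\!n$ of columns $k\!:\!n$ and stores $\bfy_k$ below the diagonal of column $k$. So the representation from $F_{k-1}\in{\rm BPSF}_{k-1}(A)$ already covers $F_k[:,1\!:\!k-1]$ with $B_{k-1}$, $V_{k-1}$, $W_{k-1}$. We therefore set $B_k$ equal to $B_{k-1}$ outside column $k$, and $V_k$ equal to $V_{k-1}$ except in row $k$. Since $\tril(UV_k^\top,-1)[:,k]=U[k+1\!:\!n,:]V_k[k,:]^\top$ below the diagonal, changing $V_k[k,:]$ affects only column $k$, so the earlier columns are untouched. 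The upper part of column $k$, rows $1\!:\!k-1$, is unchanged from $F_{k-1}$, and $F_{k-1}[1\!:\!k-1,:]$ is already represented with $W_{k-1}\tilde S^\top$. Hence only the entries $F_k[k\!:\!n,k]$ need a representation.

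For the diagonal entry we simply put $r_{kk}$ into $B_k[k,k]$. For the subdiagonal entries $F_k[k+1\!:\!n,k]=\bfy_k[k+1\!:\!n]$, note that $\bfy_k$ is a multiple of $\bfx-\alpha\bfe_1$, where $\bfx=F_{k-1}[k\!:\!n,k]$ is the first column of the bottom-right block $A[k\!:\!n,k\!:\!n]+P_{k-1}$. Thus $\bfy_k[k+1\!:\!n]=c\,\bfx[2\!:\!\text{end}]$ for a scalar $c$. We then expand $\bfx$ term by term:
\begin{itemize}
\item $A[k\!:\!n,k]=B[k\!:\!n,k]+\tril(UV^\top,-1)[k\!:\!n,k]$ contributes a band part supported in $\ell+1$ rows plus $U[k+1\!:\!n,:]V[k,:]^\top$.
\item Writing $\hat U=U[k\!:\!n,:]$ and $\hat A=A[k\!:\!n,k\!:\!n]$, the first column of $P_{k-1}\in\calP(\hat A)$ is $\hat U(JS[k,:]^\top+K\hat U^\top\hat A[:,1]+E[:,1])+X[:,:]S[k,:]^\top+Y\hat U^\top\hat A[:,1]+Z[:,1]$.
\item The terms with leading factor $\hat U$ give $U[k\!:\!n,:]\bfv$ for some $\bfv\in\bbR^r$.
\item The $X$, $Y$, $Z$ terms are supported in the first $\ell$ rows of the block, so they fall inside the lower bandwidth $\ell$ of column $k$.
\end{itemize}
Altogether, $\bfx[2\!:\!\text{end}]=U[k+1\!:\!n,:]\bfw+\mathbf{b}$, with $\bfw\in\bbR^r$ and $\mathbf{b}$ supported in the first $\ell$ entries. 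We set $V_k[k,:]=c\,\bfw^\top$ and place $c\,\mathbf{b}$ into $B_k[k+1\!:\!k+\ell,k]$, which respects lower bandwidth $\ell$.

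The main obstacle is making sure the band entries land within bandwidth $\ell$, not $\ell+1$. We must check that the supports of $B[k+1\!:\!n,k]$ and of $X$, $Y$, $Z$ (the first $\ell$ rows of the block, namely rows $k\!:\!k+\ell-1$) lie within rows $k+1\!:\!k+\ell$ of the full matrix, which is true. The remaining care is in bookkeeping:
\begin{itemize}
\item $J$, $K$ and $E$ must really multiply $\hat U$ on the left, so they can be absorbed into $V_k$ rather than into the band.
\item The degenerate case $\bfx[2\!:\!\text{end}]=0$ (where $\tau_k=0$ and $\bfy_k[k+1\!:\!n]=0$) must be handled; it is covered by $c=0$.
\end{itemize}
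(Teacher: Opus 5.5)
Your proposal is correct and follows essentially the same route as the paper. Like the paper, you copy the first $k-1$ columns from $B_{k-1},V_{k-1}$, put $r_{kk}$ into $B_k[k,k]$, and split $F_{k-1}[k\!+\!1\!:\!n,k]$ into $U[k\!+\!1\!:\!n,:]$ times ($V[k,:]^\top$ plus the $J,K,E$ contributions) and a band part from $B$ and the $X,Y,Z$ terms; everything is scaled by your $c$, which is the paper's $1/\alpha_k$. Your version is slightly cleaner in two places: you work only with $\bfx[2\!:\!\mathrm{end}]$, so the first entry of the $U[k\!:\!n,:]$-term never needs to be accounted for, and you cover the degenerate case with $c=0$ instead of dividing by a possibly vanishing $\alpha_k$.
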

\begin{proof}
The first $k-1$ columns follow by defining:
\begin{align*}
B_k[:,1\!:\!k\!-\!1] &:= B_{k-1}[:,1\!:\!k\!-\!1] \\
V_k[1\!:\!k\!-\!1,:] &:= V_{k-1}[1\!:\!k\!-\!1,:].
\end{align*}
The first $k$ rows of the $k$-th column follow by defining $B_k[k,k] := r_{kk}$. Thus the main contribution of this lemma is structure in the $k$-th Householder reflector which corresponds to the $k+1$ through $n$th rows.

Since $F_{k-1}  \in {\rm BPSF}_{k-1}(A)$ we may write
\begin{align*}
    F_{k-1}[k:n,k:n] = A[k:n,k:n] + P_{k-1}
\end{align*}

where
\begin{align*}
 P_{k-1} := & U[k:n,:]J_kS[k:n,:]^\top + U[k:n,:]K_kU[k:n,:]^\top A[k:n,k:n]\\&+U[k:n,:]E_k+X_kS[k:n,:]^\top + Y_kU[k:n,:]^\top A[k:n,k:n] + Z_k
\end{align*}
with $J_k\in \mathbb R^{r\times p}$, $K_k\in \mathbb R^{r\times r}$, $E_k\in \calS^{r\times (n-k+1)}_{r\times (\ell+m)}$, $X_k\in \calS^{(n-k+1)\times p}_{\ell\times p}$, $Y_k\in \calS^{(n-k+1)\times r}_{\ell \times r}$, and $Z_k \in \calS^{(n-k+1)\times(n-k+1)}_{\ell \times(\ell+m)}$.

The $k$-th Householder reflection is defined in terms of
\begin{align}
F_{k-1}[k:n,k] &= A[k:n,k] +   P_{k-1}[1:n-k+1, 1] \nonumber\\
&= B[k\!:\!n,k] + \begin{bmatrix}
0\\
    U[k+1\!:\!n,:] 
\end{bmatrix} V[k,:] \nonumber\\
& \qquad +  \begin{bmatrix}
0 \\
    U[k+1\!:\!n,:] 
\end{bmatrix} \underbrace{(J_k S[k,:] + K_k U^\top A[:,k] + E_k[:,1])}_{=: \bfv_k \in \bbR^r}
\nonumber\\ \label{eq:tilde_b_k}
&\qquad + \underbrace{\begin{bmatrix}
    X_{ks} S[1,:] + Y_{ks} U^\top A[:,k] +Z_{ks}[:,1] \in \bbR^{\min(\ell,n-k+1)} \\ \bfzero
\end{bmatrix}}_{=: \tilde \bfb_k}.
\end{align}

Letting
\begin{align}\label{eq:alpha_k}
\alpha_k := \text{sign}(F_{k-1}[k,k])\big| F_{k-1}[k,k] +\text{sign}(F_{k-1}[k,k])\| F_{k-1}[k\!:\!n,k]\|  \big|,
\end{align}

we define:
\begin{align}\label{eq:hv_update_1}
B_k[k+1\!:\!n,k] &:= {B[k+1:n,k] + \tilde \bfb_k[2:n-k+1] \over \alpha_k}
\end{align}
and
\begin{align}\label{eq:hv_update_2}
V_k[k,:] &:= {\bfv_k + V[k,:] \over \alpha_k}.
\end{align}
The remaining (not structurally zero) entries of $B_k$ and $V_k$ are at this point arbitrary. 
\end{proof}

We now show the first $k$ rows of $F_k$ (corresponding to $R$) also have the specified form:

\begin{lemma} \label{lemma:upper}
    For $1\leq k \leq n-1$, suppose $F_{k-1}  \in {\rm BPSF}_{k-1}(A)$. Then there exists $B_k, V_k, W_k$ such that
\[
F_k[1:k,:] = (B_k + \tril(U V_k^\top, -1) + \triu(W_k \tilde S^\top,1))[1:k,:]
\]
\end{lemma}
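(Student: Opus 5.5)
The plan is to use the fact that the $k$-th Householder reflection $I-\tau_k\bfy_k\bfy_k^\top$ acts only on rows $k$ through $n$. Rows $1$ through $k-1$ of $F_k$ therefore coincide with those of $F_{k-1}$, and only row $k$ needs new work. For the first $k-1$ rows I will take $B_k[1\!:\!k\!-\!1,:]:=B_{k-1}[1\!:\!k\!-\!1,:]$ and $W_k[1\!:\!k\!-\!1,:]:=W_{k-1}[1\!:\!k\!-\!1,:]$. I will keep the $V_k$ and $B_k$ entries already fixed in Lemma~\ref{lemma:lower}, since the strictly lower part of these rows lives in the first $k-1$ columns and is unchanged. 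The diagonal entry is $B_k[k,k]=r_{kk}$, as in Lemma~\ref{lemma:lower}.

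The whole content is then row $k$ to the right of the diagonal. For $j>k$,
\[
r_{kj} = F_{k-1}[k,j]-\tau_k\,\bfy_k[k\!:\!n]^\top F_{k-1}[k\!:\!n,j],
\]
with $F_{k-1}[k\!:\!n,k\!:\!n]=A[k\!:\!n,k\!:\!n]+P_{k-1}$ as written in the proof of Lemma~\ref{lemma:lower}. I aim to show that for $j>k+\ell+m$ this quantity equals $\bfw^\top\tilde S[j,:]^\top$ for a single vector $\bfw\in\bbR^{r+p}$ that does not depend on $j$. I then set $W_k[k,:]:=\bfw^\top$ and absorb the remaining entries $j\le k+\ell+m$ into $B_k[k,k\!+\!1\!:\!k\!+\!\ell\!+\!m]$, which is allowed by the upper bandwidth $\ell+m$.

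The key input is the structure of $\bfy_k$ from \eqref{eq:hv_update_1}--\eqref{eq:hv_update_2}:
\[
\bfy_k[k\!:\!n] = \bfb + \begin{bmatrix}0\\ U[k\!+\!1\!:\!n,:]\end{bmatrix}\bfc, \qquad \bfb\in\calS^{n-k+1}_{\ell+1},\quad \bfc=(\bfv_k+V[k,:]^\top)/\alpha_k .
\]
I will expand $\bfy_k^\top(A[k\!:\!n,k\!:\!n]+P_{k-1})[:,j]$ term by term, for $j>k+\ell+m$.

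\textbf{(i) Banded part of $\bfy_k$ against $A$.} Only rows $i\le k+\ell<j-m$ are involved, so the band of $A$ vanishes and the lower semiseparable part is absent. This leaves $\bfb^\top W[k\!:\!k\!+\!\ell,:]S[j,:]^\top$.

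\textbf{(ii) $U$-part of $\bfy_k$ against $A$.} Here I use the prefix identity
\[
U[k\!+\!1\!:\!n,:]^\top A[k\!+\!1\!:\!n,j] = (U^\top A)[:,j]-U[1\!:\!k,:]^\top A[1\!:\!k,j].
\]
Since $j>k+m$, the second term equals $U[1\!:\!k,:]^\top W[1\!:\!k,:]S[j,:]^\top$. Together the two pieces give a combination of $(A^\top U)[j,:]$ and $S[j,:]$.

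\textbf{(iii) Against $P_{k-1}$.} The $E_k$ and $Z_k$ terms vanish because of their column support of width $\ell+m$. The $X_k$ term gives $S[j,:]$. The $Y_k$ and $K_k$ terms involve $U[k\!:\!n,:]^\top A[k\!:\!n,j]$, which is handled by the same prefix identity (with prefix $1\!:\!k\!-\!1$). The $J_k$ term gives $S[j,:]$.

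The leading term $F_{k-1}[k,j]=A[k,j]+P_{k-1}[1,j-k+1]$ is handled identically: $A[k,j]=W[k,:]S[j,:]^\top$, and the first row of $P_{k-1}$ gives the same kinds of terms. Collecting coefficients yields the vector $\bfw$ multiplying $[S[j,:],\ (A^\top U)[j,:]]=\tilde S[j,:]$.

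I expect the main obstacle to be the bookkeeping in the prefix-sum step. Every occurrence of $U[k\!:\!n,:]^\top A[k\!:\!n,j]$ must be converted to the global $(U^\top A)[:,j]$ that appears in $\tilde S$. This requires the threshold $j>k+\ell+m$, chosen large enough that every subtracted prefix block of $A$ is purely upper semiseparable. I also need to check that every coefficient is independent of $j$, being built only from $\bfb,\bfc,\tau_k,J_k,K_k,X_k,Y_k$ and finite prefix sums of $U$ and $W$. Boundary cases near $n$, where $k+\ell+m\ge n$, are trivial because then the whole row lies in the band.
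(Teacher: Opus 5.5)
Your proposal is correct and follows essentially the paper's route: the paper also expands $\bfe_1^\top(I-\tau_k\bfy_k\bfy_k^\top)A_{k-1}=\bfe_1^\top A_{k-1}-\tau_k\bfy_k^\top A_{k-1}$, uses the same prefix identity to replace $U[k\!:\!n,:]^\top A[k\!:\!n,k\!:\!n]$ by $(A^\top U)[k\!:\!n,:]^\top$ minus an $S$-term and a banded term, and reads off $W_k[k,:]$ as the coefficients of $S[k\!:\!n,:]^\top$ and $(A^\top U)[k\!:\!n,:]^\top$. The only difference is presentational: you restrict to $j>k+\ell+m$ and let $B_k[k,k\!:\!k\!+\!\ell\!+\!m]$ absorb the remainder, whereas the paper tracks the sparse rows $\bfs_1,\bfs_3$ explicitly, which the algorithm needs but the existence statement does not.
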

\begin{proof}
The first $k-1$ rows follow by defining $V_k$ as before and:
\begin{align*}
B_k[1\!:\!k-1,:] &:= B_{k-1}[1\!:\!k-1,:], \\
W_k[1\!:\!k-1,:] &:= W_{k-1}[1\!:\!k-1,:].
\end{align*}

We now consider the first row in applying the $k$-th Householder reflection to
\begin{align*}
A_{k-1} &:= F_{k-1}[k\!:\!n,k\!:\!n] \\
&= A[k\!:\!n,k\!:\!n] +
\underbrace{(U[k\!:\!n,:]  J_k+ X_k[1\!:\!n-k+1,:])}_{=:\hat T_1 \in \bbR^{(n-k+1) \times p}} S[k\!:\!n,:]^\top  \\
&\qquad + \underbrace{(U[k\!:\!n,:]  K_k + Y_k[1\!:\!n-k+1,:])}_{=:T_2\in \bbR^{(n-k+1) \times r}} U[k:n,:]^\top A[k:n,k:n] \\
&\qquad + \underbrace{U[k\!:\!n,:]  E_k[:,1\!:\!n-k+1] + Z_k[1\!:\!n-k+1,1\!:\!n-k+1]}_{=:\hat T_3 \in \calS_{(n-k+1) \times (\ell +m)}^{(n-k+1) \times (n-k+1)}}.
\end{align*}
Since
\begin{equation}
\begin{aligned}\label{eq:UtA_relation}
U[k\!:\!n,:]^\top A[k\!:\!n,k\!:\!n] &= (A^\top U)[k:n,:]^\top-\underbrace{(\sum_{t=1}^{k-1}U[t,:]W[t,:]^\top)}_{=:P_1\in \mathbb{R}^{r\times p}}S[k:n,:]^\top \\& -\underbrace{\sum_{t=\max(k-m,1)}^{k-1}U[t,:]B[t,k:n]}_{=:P_2\in \calS_{r\times m}^{r\times (n-k+1)}},
\end{aligned}
\end{equation}

we can also write $A_{k-1}$ as
\begin{align*}
A_{k-1} 
&= A[k\!:\!n,k\!:\!n] +
\underbrace{(\hat T_1-T_2P_1)}_{=:T_1 \in \bbR^{(n-k+1) \times p}} S[k\!:\!n,:]^\top  \\
&\qquad + T_2 (U^\top A)[:,k\!:\!n] + \underbrace{\hat T_3 - T_2P_2}_{=:T_3 \in \calS_{(n-k+1) \times (\ell +m)}^{(n-k+1) \times (n-k+1)}}.
\end{align*}

Noting that
\[
 A[k,k\!:\!n]^\top = 
B[k,k\!:\!n]^\top + \underbrace{\bfe_k^\top \triu(W S^\top,1)[:,k\!:\!n]}_{ 
    W[k,:]^\top S[k:n,:]^\top   - W[k,:]^\top S[k,:] \bfe_1^\top
},
\]
we can write the first row as
\begin{align*}
\bfe_1^\top A_{k-1} &= \bfs_1^\top  + \underbrace{(W[k,:]^\top + \bfe_1^\top T_1)}_{=:\bfw_1^\top \in \bbR^{1 \times p}} S[k\!:\!n,:]^\top  + \underbrace{\bfe_1^\top T_2}_{=:\tilde\bfw_1^\top \in \bbR^{1 \times r}} (A^\top U)[k\!:\!n,:]^\top
\end{align*}
for the sparse row vector (which will correspond to a slice of a banded matrix)
\begin{align*}
\bfs_1^\top  :=
    \underbrace{B[k,k\!:\!n]^\top}_{\in \calS_{1 \times(m+1)}^{1\times(n-k+1)}}  - \underbrace{W[k,:]^\top S[k,:] \bfe_1^\top}_{\in \calS_{1 \times 1}^{1\times(n-k+1)}} + \underbrace{\bfe_1^\top T_3 }_{\in \calS_{1 \times (\ell+m)}^{1 \times (n-k+1)}} \in \calS_{1 \times (\ell+m+1)}^{1 \times (n-k+1)}.
\end{align*}
For the vector associated with the $k$-th Householder reflection
\begin{equation}
\begin{aligned} \label{eq:y_k}
\bfy_k &:= \begin{bmatrix}1 \\
    F_k[k+1\!:\!n,k]
\end{bmatrix} \\
&=\underbrace{B_k[k\!:\!n,k]+(1-B_k[k,k] - U[k,:]^\top V_k[k,:]) \bfe_1}_{=: \hat \bfb_k \in \calS_{\ell+1}^{n-k+1}} + U[k\!:\!n,:] V_k[k,:],
\end{aligned}
\end{equation}
we first write
\begin{align} \label{eq:yA}
\bfy_k^\top A[k\!:\!n,k\!:\!n] &=  \bfs_2^\top  + V_k[k,:]^\top U[k\!:\!n,:]^\top A[k\!:\!n,k\!:\!n]   +  \hat \bfb_k^\top W[k\!:\!n,:]  S[k\!:\!n,:]^\top
\end{align}
for 
\begin{align*}
\bfs_2^\top &:= \underbrace{\hat \bfb_k^\top B[k\!:\!n,k\!:\!n]}_{\in \calS_{1 \times(\ell+m+1)}^{1\times(n-k+1)}} + \underbrace{\hat \bfb_k^\top \tril(U[k\!:\!n,:]  V[k\!:\!n,:]^\top,-1)}_{\in \calS_{1 \times \ell+1}^{1\times(n-k+1)}}
\\& \qquad - \underbrace{\hat \bfb_k^\top \tril(W[k\!:\!n,:]  S[k\!:\!n,:]^\top,0)}_{\in \calS_{1 \times (\ell+1)}^{1\times(n-k+1)}} \in  \calS_{1 \times(\ell+m+1)}^{1\times(n-k+1)}.
\end{align*}

We therefore find
\begin{align*}
\bfy_k^\top A_{k-1} &= 
\underbrace{\bfs_2^\top - V_k[k,:]^\top P_2+  \bfy_k^\top T_3}_{=: \bfs_3^\top \in \calS_{1 \times \ell+m+1}^{1 \times (n-k+1)}} + \underbrace{(V_k[k,:]^\top  + \bfy_k^\top T_2)}_{=: \tilde \bfw_2^\top \in \bbR^{1 \times r}} (A^\top U)[k\!:\!n,:]^\top\\
&\qquad + 
  \underbrace{(\hat \bfb_k^\top W[k\!:\!n,:] - V_k[k,:]^\top P_1 + \bfy_k^\top T_1)}_{=: \bfw_2^\top \in \bbR^{1 \times p}}  S[k\!:\!n,:]^\top.
\end{align*}
Thus we find for $\beta_k:= -\tau_k \bfe_1^\top \bfy_k$ that applying the Householder reflection yields
\begin{align*}
    \bfe_1^\top (I - \tau_k \bfy_k \bfy_k^\top)A_{k-1} &=
    \bfs_1^\top  + \beta_k \bfs_3^\top + (\bfw_1^\top + \beta_k \bfw_2^\top) S[k\!:\!n,:]^\top \\
    &\qquad  + (\tilde \bfw_1^\top + \beta_k \tilde \bfw_2^\top) (A^\top U)[k\!:\!n,:]^\top
\end{align*}
and hence the results from defining
\begin{align}\label{eq:row_updtae_1}
W_k[k,:]^\top &:= \begin{bmatrix} 
    (\bfw_1^\top + \beta_k \bfw_2^\top) & (\tilde \bfw_1^\top + \beta_k \tilde \bfw_2^\top) \end{bmatrix} \in \bbR^{1 \times (r+p)}
\end{align}
and 
\begin{align}\label{eq:row_updtae_2}
B_k[k,k\!:\!n]^\top &:= \bfs_1^\top  + \beta_k \bfs_3^\top  \in \calS_{1 \times (\ell+m+1)}^{1 \times (n-k+1)}.
\end{align}

\end{proof}

We put these results together to show that the structure for partial factor matrices is preserved at each iteration:

\begin{lemma} \label{Lemma:Structure_preserve}
    For $1\leq k \leq n-1$, suppose $F_{k-1}  \in {\rm BPSF}_{k-1}(A)$. Then $F_k \in {\rm BPSF}_k(A)$. 
\end{lemma}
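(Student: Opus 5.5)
Given Lemmas~\ref{lemma:lower} and~\ref{lemma:upper}, the first $k$ columns and first $k$ rows of $F_k$ already have the required BPS form. I will merge the two constructions into a single triple $(B_k,V_k,W_k)$. What remains is to show that the new bottom-right block $F_k[k\!+\!1\!:\!n,k\!+\!1\!:\!n]$ equals $A[k\!+\!1\!:\!n,k\!+\!1\!:\!n]+P_k$ with $P_k\in\calP(A[k\!+\!1\!:\!n,k\!+\!1\!:\!n])$.

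For the merge, Lemma~\ref{lemma:lower} fixes $B_k$ on the strictly lower part of column $k$ and $V_k[k,:]$. Lemma~\ref{lemma:upper} fixes $B_k[k,k\!:\!n]$ and $W_k[k,:]$, and copies the earlier rows and columns from $B_{k-1},V_{k-1},W_{k-1}$. These assignments touch disjoint entries, except $B_k[k,k]=r_{kk}$, on which they agree. In column $k$ the upper part only involves $W_{k-1}[1\!:\!k\!-\!1,:]=W_k[1\!:\!k\!-\!1,:]$, so the two descriptions are consistent. I will also check that $B_k\in{\mathrm{BM}}_{(\ell,\ell+m)}^{n\times n}$: the column-$k$ entries come from $\tilde\bfb_k$ and $B[k\!:\!n,k]$, which are supported in $\ell+1$ rows, and the row-$k$ entries lie in $\calS_{1\times(\ell+m+1)}$.

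For the bottom-right block, I will use the decomposition of $A_{k-1}$ from the proof of Lemma~\ref{lemma:upper}:
\[
A_{k-1}=A[k\!:\!n,k\!:\!n]+T_1S[k\!:\!n,:]^\top+T_2(U^\top A)[:,k\!:\!n]+T_3 .
\]
I will also use the formula $\bfy_k^\top A_{k-1}=\bfs_3^\top+\tilde\bfw_2^\top(A^\top U)[k\!:\!n,:]^\top+\bfw_2^\top S[k\!:\!n,:]^\top$ from the same proof. Applying the reflector gives
\[
(I-\tau_k\bfy_k\bfy_k^\top)A_{k-1}=A_{k-1}-\tau_k\bfy_k\bigl(\bfs_3^\top+\tilde\bfw_2^\top(U^\top A)[:,k\!:\!n]+\bfw_2^\top S[k\!:\!n,:]^\top\bigr),
\]
with $\bfy_k=\hat\bfb_k+U[k\!:\!n,:]V_k[k,:]$ and $\hat\bfb_k\in\calS_{\ell+1}^{n-k+1}$. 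Every correction term then has the form of a generator of $\calP(A[k\!:\!n,k\!:\!n])$:
\begin{itemize}
\item $U(\cdot)S^\top$ gives a $J$-term;
\item $U(\cdot)(U^\top A)$ gives a $K$-term, after converting $(U^\top A)[:,k\!:\!n]$ back to $U[k\!:\!n,:]^\top A[k\!:\!n,k\!:\!n]$ via \eqref{eq:UtA_relation}, which produces extra $J$- and $E$-type pieces through $P_1,P_2$;
\item $U\,\bfs_3^\top$ gives an $E$-term;
\item $\hat\bfb_k(\cdot)^\top$ gives $X$-, $Y$- and $Z$-terms, supported in the first $\ell+1$ rows.
\end{itemize}
So the full reflected block lies in $A[k\!:\!n,k\!:\!n]+\tilde\calP$, where $\tilde\calP$ is $\calP$ with row supports $\ell+1$ instead of $\ell$ and column supports $\ell+m+1$ instead of $\ell+m$. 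Deleting the first row and column then lands in $\calP(A[k\!+\!1\!:\!n,k\!+\!1\!:\!n])$ by the argument of Lemma~\ref{Lemma:Ppertub}. Deleting the first row reduces the $X,Y,Z$ row support from $\ell+1$ to $\ell$. Deleting the first column shifts the $E,Z$ column support from $\ell+m+1$ to $\ell+m$. The cross terms involving $U[k,:]$ and $A[k,k\!+\!1\!:\!n]$ are reabsorbed exactly as in that proof, since row $k$ of $A$ is a banded entry plus $W[k,:]^\top S^\top$.

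The main obstacle is the support bookkeeping at this step. I must verify that $\bfs_3$ (and $T_3$, $P_2$) have column support at most $\ell+m+1$ relative to column $k$, so that it drops to $\ell+m$ after truncation. I must also verify that the $\hat\bfb_k$-terms never need more than $\ell+1$ rows. A finer point is that Lemma~\ref{Lemma:Ppertub} is stated for $\calP$ and not for the enlarged $\tilde\calP$. I plan to redo its computation directly for $\tilde\calP$, which is the same algebra with shifted supports, rather than cite it. Finally, I will note that the terminal case $F_n$ simply absorbs the $1\times1$ perturbation into $r_{nn}$.
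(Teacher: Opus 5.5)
Your proposal is correct and is essentially the paper's argument: the paper also expands $\tau_k\bfy_k\bfy_k^\top A_{k-1}$ using $\bfy_k=\hat\bfb_k+U[k\!:\!n,:]V_k[k,:]$ and the row formula from Lemma~\ref{lemma:upper}, sorting the result into $Z$-, $Y$-, $X$-, $E$-, $K$- and $J$-type pieces. The only difference is the order of operations. The paper first truncates the old perturbation with Lemma~\ref{Lemma:Ppertub}, then handles the new rank-one term via the same row-$k$ identity $U[k\!:\!n,:]^\top A[k\!:\!n,k\!:\!n]=\hat U_k^\top\hat A_k+P_3S[k\!:\!n,:]^\top+P_4$ before restricting to rows and columns $2$ through $n-k+1$. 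You instead put everything into the enlarged space with supports $\ell+1$ and $\ell+m+1$ and truncate once at the end, which works because the cross terms only add supports of width $m\le\ell+m$.
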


\begin{proof}
    We have already showed the result apart from the bottom right, in particular we need to show
\begin{align*}
A_k &:= F_k[(k\!+\!1)\!:\!n,(k\!+\!1)\!:\!n] \\
&= 
 ((I - \tau_k \bfy_k \bfy_k^\top)A_{k-1})[2\!:\!n-k+1,2\!:\!n-k+1] \\
 &\in A[(k+1)\!:\!n,(k+1)\!:\!n]+\calP(A[(k+1)\!:\!n,(k+1)\!:\!n]).
\end{align*}
From Lemma~\ref{Lemma:Ppertub} we know that since $A_{k-1} \in A[k\!:\!n,k\!:\!n]+\calP(A[k\!:\!n,k\!:\!n])$ we have
\[
A_{k-1}[2\!:\!(n-k+1),2\!:\!(n-k+1)] \in A[(k\!+\!1)\!:\!n,(k\!+\!1)\!:\!n]+\calP(A[(k\!+\!1)\!:\!n,(k\!+\!1)\!:\!n]).
\]

By applying the same technique as in the proof of Lemma~\ref{Lemma:Ppertub}, we can express
\begin{align*}
    & A_{k-1}[2\!:\!(n-k+1),2\!:\!(n-k+1)] \\=&  A[(k\!+\!1)\!:\!n,(k\!+\!1)\!:\!n] + U[(k\!+\!1)\!:\!n,:]J^{(k)}S[(k\!+\!1)\!:\!n,:]^\top \\&+U[(k\!+\!1)\!:\!n,:]K^{(k)}U[(k\!+\!1)\!:\!n,:]^\top A[(k\!+\!1)\!:\!n,(k\!+\!1)\!:\!n] \\&
    +U[(k\!+\!1)\!:\!n,:]E^{(k)} + X^{(k)}S[(k\!+\!1)\!:\!n,:]^\top \\&
    + Y^{(k)}U[(k\!+\!1)\!:\!n,:]^\top A[(k\!+\!1)\!:\!n,(k\!+\!1)\!:\!n] + Z^{(k)}
\end{align*}
with $J^{(k)}\in \mathbb R^{r\times p}$, $K^{(k)}\in \mathbb R^{r\times r}$, $E^{(k)}\in \calS_{r\times(\ell+m)}^{r\times (n-k)}$, $X^{(k)}\in \calS_{\ell \times p}^{(n-k)\times p}$, $Y^{(k)}\in \calS_{\ell \times r}^{(n-k)\times r}$ and $Z^{(k)}\in \calS_{\ell \times(k+m)}^{(n-k)\times (n-k)}$.

We thus need only consider $\bfy_k \bfy_k^\top A_{k-1}$.
Let $\hat U_k\in \mathbb R^{n-k+1}$ s.t. $\hat U_k[1,:] = \mathbf{0}$ and $\hat U_k[2:n-k+1,:]=U[k+1:n,:]$; $\hat A_k \in \mathbb R^{(n-k+1)\times(n-k+1)}$ s.t. $\hat A_k[1,:] = \mathbf{0}$ and $\hat A_k[2:n-k+1,:] = A[k+1:n,:]$.

Using the technique from the proof of Lemma \ref{lemma:upper} we can write

\begin{align*}
U[k\!:\!n,:]^\top A[k\!:\!n,k\!:\!n] &= \hat U_k^\top \hat A_k+\underbrace{U[k,:]W[k,:]^\top}_{=:P_3\in \mathbb{R}^{r\times p}}S[k:n,:]^\top \\& + \underbrace{U[k,:]B[k,k:n]-U[k,:]W[k,:]^\top S[k,:]\bfe_1^\top}_{=:P_4\in \calS_{r\times m}^{r\times (n-k+1)}}
\end{align*}

and

\begin{align*}
\bfy_k^\top A_{k-1} &= 
\underbrace{\bfs_2^\top +V_k[k,:]^\top P_4+  \bfy_k^\top T_3}_{=: \bfs_4^\top \in \calS_{1 \times \ell+m+1}^{1 \times (n-k+1)}} + \underbrace{(V_k[k,:]^\top  + \bfy_k^\top T_2)}_{=: \tilde \bfw_2^\top \in \bbR^{1 \times r}} \hat U_k^\top \hat A_k\\
&\qquad + 
  \underbrace{(\hat \bfb_k^\top W[k\!:\!n,:] +V_k[k,:]^\top P_3+ \bfy_k^\top T_1)}_{=: \bfw_3^\top \in \bbR^{1 \times p}}  S[k\!:\!n,:]^\top.
\end{align*}

Therefore, 
\begin{align*}
\bfy_k \bfy_k^\top A_{k-1} &=
(\hat \bfb_k + U[k:n,:] V_k[k,:]) \times \\
&\qquad
(\bfs_4^\top + \tilde \bfw_2^\top \hat U_k^\top \hat A_k +
 \bfw_3^\top S[k:n,:]^\top ) \\
 &= \underbrace{\hat \bfb_k \bfs_4^\top}_{\in \calS_{(\ell+1) \times (\ell+m+1)}^{(n-k+1) \times (n-k+1)}}  + \underbrace{\hat \bfb_k \tilde \bfw_2^\top}_{\in \calS_{(\ell+1) \times r}^{(n-k+1) \times r}} \hat U_k^\top \hat A_k + 
 \underbrace{\hat \bfb_k \bfw_3^\top}_{\in \calS_{(\ell+1) \times p}^{(n-k+1) \times p}} S[k:n,:]^\top   \\
&\qquad + U[k:n,:] \underbrace{ V_k[k,:] \bfs_4^\top}_{\in \calS_{r \times (\ell+m+1)}^{r \times (n-k+1)}} + 
U[k:n,:] \underbrace{ V_k[k,:] \tilde \bfw_2^\top}_{\in \bbR^{r \times r}} \hat U_k^\top \hat A_k \\
&\qquad + U[k:n,:] \underbrace{ V_k[k,:] \bfw_3^\top}_{\in \bbR^{r \times p}} S[k:n,:]^\top.
\end{align*}

Noticing that $(\hat U_k^\top \hat A_k)[:,2:n-k+1]= U[k+1:n,:]^\top A[k+1:n,k+1:n]$, we can further write 
\begin{align*}
    A_k=&A_{k-1}[2:n-k+1,2:n-k+1]-(\tau_k \bfy_k \bfy_k^\top A_{k-1})[2:n-k+1,2:n-k+1] \\
    =& A[(k\!+\!1)\!:\!n,(k\!+\!1)\!:\!n] + U[(k\!+\!1)\!:\!n,:]J_{k+1}S[(k\!+\!1)\!:\!n,:]^\top \\&+U[(k\!+\!1)\!:\!n,:] K_{k+1}U[(k\!+\!1)\!:\!n,:]^\top A[(k\!+\!1)\!:\!n,(k\!+\!1)\!:\!n] \\&
    +U[(k\!+\!1)\!:\!n,:] E_{k+1} + X_{k+1}S[(k\!+\!1)\!:\!n,:]^\top \\&
    + Y_{k+1}U[(k\!+\!1)\!:\!n,:]^\top A[(k\!+\!1)\!:\!n,(k\!+\!1)\!:\!n] + Z_{k+1} \\
    \in & A[(k+1)\!:\!n,(k+1)\!:\!n]+\calP(A[(k+1)\!:\!n,(k+1)\!:\!n])
\end{align*}
where
\begin{align}
    J_{k+1} &:= J^{(k)} - \tau_kV_k[k,:] \bfw_3^\top \in \mathbb R^{r\times p}, \label{eq:submatrix_update_J} \\
    K_{k+1} &:= K^{(k)} - \tau_kV_k[k,:]\tilde \bfw_2^\top \in \mathbb R^{r\times r}, \label{eq:submatrix_update_K} \\
    E_{k+1} &:= E^{(k)} - \tau_k V_k[k,:] \bfs_4^\top[2:n-k+1] \in \calS_{r\times(\ell+m)}^{r\times(n-k)},\label{eq:submatrix_update_E} \\
    X_{k+1} &:= X^{(k)} - \tau_k \hat \bfb_k[2:n-k+1]\bfw_3^\top \in \calS_{\ell \times p}^{(n-k)\times p}, \label{eq:submatrix_update_X} \\
    Y_{k+1} &:= Y^{(k)} - \tau_k\hat \bfb_k[2:n-k+1] \tilde \bfw_2^\top \in \calS_{\ell \times r}^{(n-k) \times r}  \label{eq:submatrix_update_Y}, \\
    Z_{k+1} &:= Z^{(k)} - \tau_k \hat \bfb_k[2:n-k+1]\bfs_4^\top [2:n-k+1] \in \calS_{\ell \times (\ell+m)}^{(n-k)\times(n-k)}. \label{eq:submatrix_update_Z}
\end{align}

\end{proof}

We now arrive at the main result: the final factor matrix is BPS:

\begin{mytheorem}\label{thm:structure_preserve}
    The QR factorization of a BPS matrix yields a BPS factor matrix:
$
F_n \in {\mathrm{BPS}}_{(r,r+p),(\ell,\ell+m)}^{n \times n}.
$

\end{mytheorem}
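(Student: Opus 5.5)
The plan is an induction on $k$ using Lemma~\ref{Lemma:Structure_preserve}, followed by a separate treatment of the terminal step that produces $F_n$. The base case is $F_0 = A \in {\rm BPSF}_0(A)$. Here the conditions on the first $0$ rows and columns are vacuous. The bottom-right block is $A[1\!:\!n,1\!:\!n] = A + 0$, and $0 \in \calP(A)$ by taking $J,K,E,X,Y,Z$ all zero. Applying Lemma~\ref{Lemma:Structure_preserve} for $k=1,\dots,n-1$ then gives $F_{n-1} \in {\rm BPSF}_{n-1}(A)$. So there exist $B_{n-1} \in {\mathrm{BM}}_{(\ell,\ell+m)}^{n\times n}$, $V_{n-1}$ and $W_{n-1}$ such that the first $n-1$ rows and columns of $F_{n-1}$ agree with
\[
G := B_{n-1} + \tril(U V_{n-1}^\top,-1) + \triu(W_{n-1}\tilde S^\top,1).
\]
The first $n-1$ rows and columns of $F_n$ coincide with those of $F_{n-1}$, since passing to $F_n$ only alters the $(n,n)$ entry.

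The remaining task is the $(n,n)$ entry. By definition $F_n[n,n] = r_{nn} = A[n,n] + P_{n-1}$, where $P_{n-1} \in \calP(A[n\!:\!n,n\!:\!n])$ is a $1\times 1$ scalar. Every entry of $G$ outside the diagonal is already fixed by the first $n-1$ rows and columns. The diagonal entry, however, comes solely from the banded part, because both $\tril(\cdot,-1)$ and $\triu(\cdot,1)$ vanish on the diagonal. I would therefore set $B_n := B_{n-1}$ everywhere except $B_n[n,n] := r_{nn}$, and keep $V_n := V_{n-1}$ and $W_n := W_{n-1}$. This change preserves the bandwidth $(\ell,\ell+m)$. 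It yields
\[
F_n = B_n + \tril(U V_n^\top,-1) + \triu(W_n \tilde S^\top,1)
\]
with $U,V_n\in\bbR^{n\times r}$ and $W_n,\tilde S\in\bbR^{n\times(r+p)}$, so $F_n \in {\mathrm{BPS}}_{(r,r+p),(\ell,\ell+m)}^{n \times n}$.

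The step needing the most care is checking that the two ``first $k$ rows/columns'' conditions at $k=n-1$ really pin down every off-diagonal entry of $F_n$ consistently with a single triple $(B_{n-1},V_{n-1},W_{n-1})$. The induction in Lemmas~\ref{lemma:lower} and~\ref{lemma:upper} overwrites only entries in row $k$ and column $k$, and leaves earlier rows and columns of $B_k,V_k,W_k$ untouched. The same generators therefore serve both the row and the column conditions, and each entry $(i,j)$ with $\min(i,j)\le n-1$ is accounted for. This leaves only the $(n,n)$ entry, which is handled by absorbing it into the band as above.
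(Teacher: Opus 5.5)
Your proposal is correct and follows essentially the same route as the paper. You take the base case $F_0 = A \in {\rm BPSF}_0(A)$ with all perturbation parameters zero, induct via Lemmas~\ref{lemma:lower}, \ref{lemma:upper} and~\ref{Lemma:Structure_preserve} up to $F_{n-1}$, and finally absorb the remaining $(n,n)$ entry into the diagonal of the banded part with $V_n := V_{n-1}$ and $W_n := W_{n-1}$. Your extra check that one generator triple serves both the row and column conditions makes explicit what the definition of ${\rm BPSF}_k(A)$ and the lemmas already guarantee.
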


\begin{proof}
    It is easy to see that $F_0:=A\in {\rm BPSF}_{0}(A)$ (with the parameter matrices $J, K, E, X, Y$ and $Z$ all $\mathbf{0}$). From Lemma \ref{lemma:lower}, \ref{lemma:upper}, and \ref{Lemma:Structure_preserve}, an induction argument shows that $F_{n-1}$ can be written as 
\[
F_{n-1} = B_{n-1} + \tril(U V_{n-1}^\top,-1) + \triu(W_{n-1} \tilde S^\top,1)
\]
except at the entry $F_n[n,n]$.

Now define $V_n:=V_{n-1}$, $W_n:=W_{n-1}$, and let $B_n$ coincide with $B_{n-1}$ everywhere except for the last diagonal entry $B_n[n,n]=F_{n-1}[n,n]$. We then obain

\[
F_{n} = B_{n} + \tril(U V_{n}^\top,-1) + \triu(W_{n} \tilde S^\top,1).
\]

\end{proof}

\section{Fast Algorithms for QR Factorization and Direct Solvers}
\label{sec:algorithms}

We now discuss the practical implementation of an $O(n)$ algorithm for computing the QR factorization of a BPS matrix based on theorem \ref{thm:structure_preserve}.

\begin{algorithm}[H]
\caption{Fast QR Factorization for BPS Matrices}
\label{algo:fastqr}
\begin{algorithmic}
\State \textbf{Input} Banded-plus-semiseparable matrix $A \in \BPS$.
\State \textbf{Output} Factor matrix $F \in {\mathrm{BPS}}_{(r,r+p),(\ell,\ell+m)}^{n \times n}$ and scaling vector
$\boldsymbol{\tau} \in \mathbb{R}^{n}$.

\State \textbf{Initialization:}
\State Compute $A^\top U$ in $O(n)$, see \cite{chandrasekaran2002fast}, and set $\tilde S \gets \begin{bmatrix} S & A^\top U
\end{bmatrix}$ 
\State Initialize $B_n \gets \mathbf{0}_{n \times n}$, $V_n \gets \mathbf{0}_{n \times r}$, $W_n \gets \mathbf{0}_{n \times (r+p)}$
\State Initialize parameter matrices: $J \gets \mathbf{0}_{r \times p}$, $K \gets \mathbf{0}_{r \times r}$, $E_s \gets \mathbf{0}_{r \times \min(\ell+m,n)}$, $X_s \gets \mathbf{0}_{\min(\ell,n) \times p}$, $Y_s \gets \mathbf{0}_{\min(\ell,n) \times r}$, $Z_s \gets \mathbf{0}_{\min(\ell,n) \times \min(\ell+m,n)}$

\For{$k = 1$ to $n-1$}
    \State \textbf{Step 1: Form Householder vector $\bfy_k$}
    \State Run $ \text{FormHouseholderVector}(A_{k-1}, U[k:n, :], V[k:n, :], \ell)$ (Algorithm \ref{algo:form_householder}) to get:
    \State $\bar{\mathbf{k}}_{k} \gets \text{low-rank part from } \mathbf{y}_k$
    \State $\mathbf{b}_{k} \gets \text{banded part from } \bfy_k$
    \State $\tau_k \gets \text{scaling coefficient}$
    \State $\boldsymbol{\tau}[k] \gets \tau_k$

    \State \textbf{Step 2: Store $k$-th column of $F$}
    \State $V_n[k, :] \gets \bar{\mathbf{k}}_{k}$ (Store low-rank generator)
    \For{$j = k+1$ to $\min(k+\ell, n)$}
        \State $B_n[j, k] \gets \mathbf{b}_{k}[j-k+1]$ (Store banded part)
    \EndFor

    \State \textbf{Step 3: Compute and store $k$-th row of $F$}
    \State $[\tilde \bfw_k, \tilde{\mathbf{d}}_k] \gets \text{ComputeRowUpdate}(A_{k-1}, U, \tilde S, \mathbf{k}_{k}, \mathbf{b}_{k}, \tau_k)$ (Algorithm \ref{algo:compute_row})
    \State $W_n[k, :] \gets \tilde \bfw_k$
    
    \For{$j = k$ to $\min(k+\ell+m, n)$}
        \State $B_n[k, j] \gets \tilde{\mathbf{d}}_{k}[j-k+1]$ (Store upper banded part)
    \EndFor

    \State \textbf{Step 4: Update matrices}
    \State $[J, K, E_s, X_s, Y_s, Z_s] \gets \text{UpdateMatrices}(J, K, E_s, X_s, Y_s, Z_s, \bar{\mathbf{k}}_{k}, \mathbf{b}_{k}, \tau_k)$ (Algorithm \ref{algo:update_hmbpsm})
\EndFor

\State \textbf{Final step:}
\State $B_n[n, n] \gets A_{n-1}[n, n]$
\State \Return $B_n + \tril(UV_n^\top,-1) + \triu(W_n \tilde S^\top,1)$ and  $\boldsymbol{\tau}$.
\end{algorithmic}
\end{algorithm}

\begin{algorithm}
\caption{FormHouseholderVector}
\label{algo:form_householder}
\begin{algorithmic}
\State \textbf{Input} $A_{k-1} \in \mathbb{R}^{(n-k+1) \times (n-k+1)}$: the current trailing submatrix at step $k$, expressible as the remaining block of the original matrix plus accumulated structured perturbations, i.e., $A_{k-1} = A[k:n, k:n] + P_{k-1}$ where $P_{k-1} \in \mathcal{P}(A[k:n, k:n])$;
$U_k \in \mathbb{R}^{(n-k+1) \times r}$: $U[k:n, :]$;$V_k \in \mathbb{R}^{(n-k+1) \times r}$: $V[k:n, :]$;
$\ell$: lower bandwidth.
\State \textbf{Output} $\bar{\mathbf{k}} \in \mathbb{R}^r$, $\mathbf{b} \in \mathbb{R}^{n-k+1}$, $\tau\in \mathbb{R}$ s.t. $\bfy$: = $ \bfe_1 + \tilde U_k\bar{\mathbf{k}} + \mathbf{b}$ is the Householder vector where $\tilde U_k \in \mathbb{R}^{(n-k+1) \times r}$ with $\tilde U_k[1,:]=\mathbf{0}$ and $\tilde U_k[2:n-k+1,:]=U[k+1:n,:]$. $I-\tau \bfy \bfy^\top$ is the Householder transformation. (Note that the expression for $\bfy$ here differs from that in Eq.~(\ref{eq:y_k}); both forms are valid, and it is straightforward to convert between them.)
\State $\tilde \bfb_k \gets $ Eq. (\ref{eq:tilde_b_k}), \text{computed using the entries available within the input $A_{k-1}$.}
\State $\alpha_k \gets $ Eq. (\ref{eq:alpha_k}), computed entirely from $A_{k-1}$.
\State $\mathbf{b} \gets $ Eq. (\ref{eq:hv_update_1})
\State $\bar{\mathbf{k}} \gets $ Eq. (\ref{eq:hv_update_2}) 
\State $\mathbf{b}$ is only nonzero in entries $2$ through $\min(\ell+1, n-k+1)$.
\State $\tau \gets 2/\bfy^\top\bfy$
\State \Return $\bar{\mathbf{k}}$, $\mathbf{b}$, $\tau$
\end{algorithmic}
\end{algorithm}

\begin{algorithm}
\caption{ComputeRowUpdate}
\label{algo:compute_row}
\begin{algorithmic}
\State \textbf{Input} Current submatrix $A_{k-1}$,
  matrices $U, \tilde S$, a
$\mathbf{k}_{k}$, and scaling vector $\mathbf{b}_{k}$, $\tau_k$.
\State \textbf{Output} semiseparable part $\tilde \bfw \in \mathbb{R}^{r+p}$ and banded part $\tilde{\mathbf{d}} \in \mathbb{R}^{n-k+1}$ 

\State $\tilde \bfw^\top \gets $ Eq.(\ref{eq:row_updtae_1})

\State $\tilde{\mathbf{d}}^\top \gets $ Eq.(\ref{eq:row_updtae_2})
\State $\tilde{\mathbf{d}}$ is only nonzero in entries $1$ through $\min(\ell+m+1, n-k+1)$.
\State \Return $\tilde \bfw, \tilde{\mathbf{d}}$
\end{algorithmic}
\end{algorithm}

\begin{algorithm}
\caption{UpdateMatrices}
\label{algo:update_hmbpsm}
\begin{algorithmic}
\State \textbf{Input} $J, K, E_s, X_s, Y_s, Z_s$: current parameter matrices;
$\bar{\mathbf{k}}_{k}$, $\mathbf{b}_{k}$, $\tau_k$.
\Ensure Updated $J, K, E_s, X_s, Y_s, Z_s$

\State Compute updates using formulas from Lemma~\ref{Lemma:Structure_preserve} proof:
\State $J_{\text{new}} \gets $Eq. (\ref{eq:submatrix_update_J})
\State $K_{\text{new}} \gets $Eq. (\ref{eq:submatrix_update_K})
\State $E_s^{\text{new}} \gets $Eq. (\ref{eq:submatrix_update_E})
\State $X_s^{\text{new}} \gets $Eq. (\ref{eq:submatrix_update_X})
\State $Y_s^{\text{new}} \gets $Eq. (\ref{eq:submatrix_update_Y})
\State $Z_s^{\text{new}} \gets $Eq. (\ref{eq:submatrix_update_Z})

\State \Return $J_{\text{new}}, K_{\text{new}}, E_s^{\text{new}}, X_s^{\text{new}}, Y_s^{\text{new}}, Z_s^{\text{new}}$
\end{algorithmic}
\end{algorithm}

\subsubsection*{Complexity Analysis}

The algorithm runs for $n-1$ steps. The cost per step can be expressed as a polynomial in terms of $r$, $p$, $\ell$, and $m$. Since these are constants independent of $n$, the total complexity is $O(n)$ as $n \rightarrow \infty$. The memory footprint is also $O(n)$, as we store only the generators and banded components.

\begin{remark}\label{Remark:lookup}

To maintain the $O(1)$ per-step complexity in Algorithm \ref{algo:fastqr}, two key quantities must be computed efficiently during the Householder updates:

    Inner product matrix $U[k:n,:]^\top U[k:n,:]$: The computation of many intermediate vectors requires evaluating expressions like $U[k:n,:]^\top \bfy_k $, which involves $U[k:n,:]^\top U[k:n,:]$. So we precompute a lookup table:
    \[
    \text{UU\_lookup}[k] = U[k:n,:]^\top U[k:n,:] \quad \text{for } k = 1,\dots,n
    \]
    This can be computed in $O(nr^2)$ time via a backward accumulation.

    Partial sum $\sum_{t=1}^{j} U[t,:] W[t,:]^\top$: The update of the upper triangular part for factor matrix requires this sum(Eq.\ref{eq:UtA_relation}). We precompute:
    \[
    \text{UV\_lookup}[j] = \sum_{t=1}^{j} U[t,:] W[t,:]^\top \quad \text{for } j = 1,\dots,n-1
    \]
    This is computed in $O(nrp)$ time via forward accumulation.

Both precomputations require $O(n)$ total time and enable $O(1)$ access to the required quantities at each step of the factorization, thus preserving the overall $O(n)$ complexity.
\end{remark}

\begin{remark}
    In certain special cases the look-up tables can be pre-computed analytically. for example, when $U = [1, 1/2, 1/3, \dots, 1/n]^\top$ we have
\[
UU\_lookup[k] = \sum_{j=k}^n {1 \over j^2} = \psi'(k)-\psi'(n+1)
\]
for the polygamma function \cite[5.15]{OlverNIST}
\[
\psi'(k) = \sum_{j=0}^\infty {1\over (k+j)^2}
\]
which can be computed effectively via asymptotics and recurrence relationships \cite[5.12]{OlverNIST}.
\end{remark}

\subsection{Fast Solver for BPS Matrices}
\label{sec:solver}

Theorem \ref{thm:structure_preserve} not only enables an efficient QR factorization but also facilitates a complete direct solver for linear systems of the form \( A\mathbf{x} = \mathbf{b} \), where \( A \) is a BPS matrix. The solver consists of two phases after the QR factorization \( A = QR \):
1. Application of \( Q^\top \) to the right-hand side vector \( \mathbf{b} \) to form \( \mathbf{c} = Q^\top \mathbf{b} \).
2. Solution of the upper triangular system \( R\mathbf{x} = \mathbf{c} \) via backward substitution.

We now present \( O(n) \) algorithms for both phases, leveraging the structured representation of the factorization output by Algorithm \ref{algo:fastqr}.

\subsubsection{Fast Application of \( Q^\top \)}

The orthogonal matrix \( Q \) is represented as a product of Householder transformations:
\[
Q = (I - \tau_1 \bfy_1 \bfy_1^\top)(I - \tau_2 \bfy_2 \bfy_2^\top) \cdots (I - \tau_{n-1} \bfy_{n-1} \bfy_{n-1}^\top).
\]
Applying \( Q^\top \) to a vector \( \mathbf{b} \) thus requires computing:
\[
Q^\top\mathbf{b} = (I - \tau_{n-1} \bfy_{n-1} \bfy_{n-1}^\top) \cdots (I - \tau_2 \bfy_2 \bfy_2^\top)(I - \tau_1 \bfy_1 \bfy_1^\top)\mathbf{b}.
\]

The Householder vectors \( \bfy_k \) are stored in the factor matrix \( F \) according to the normalization convention established in Section~\ref{sec:factors}:
\[
\bfy_k[j] =
\begin{cases}
0, & j < k \\
1, & j = k \\
F[j,k], & j > k
\end{cases}
\quad \text{for } k = 1, \ldots, n-1.
\]

From Theorem \ref{thm:structure_preserve}, the factor matrix \( F \) admits the BPS representation:
\begin{equation*}
F = B_n + \tril(U V_n^\top, -1) + \triu(W_n \tilde S^\top, 1),
\end{equation*}
where \( U, V_n \in \mathbb{R}^{n \times r} \), \( W_n, \tilde S \in \mathbb{R}^{n \times (r+p)} \), and \( B_n \) is banded with lower bandwidth \( \ell \) and upper bandwidth \( \ell+m \).

This structure implies that each Householder vector \( \bfy_k \) can be expressed as:
\begin{equation}
\bfy_k = \mathbf{e}_1 + \tilde U_k V_n[k,:] + \mathbf{d}_k,
\label{eq:y_structure}
\end{equation}
where:

\( \tilde U_k \in \mathbb{R}^{n \times r} \) satisfies \( \tilde U_k[1:k,:] = \mathbf{0} \) and \( \tilde U_k[k+1:n,:] = U[k+1:n,:] \),

and \( \mathbf{d}_k \in \mathbb{R}^n \) is non-zero only in entries \( k+1 \) through \( \min(k+\ell, n) \), with \( \mathbf{d}_k[j] = B_n[j,k] \) for \( j = k+1, \ldots, \min(k+\ell, n) \).

Algorithm~\ref{algo_applyQ} exploits this structure to compute \( Q^\top\mathbf{b} \) in \( O(n) \) operations by maintaining a compressed representation of the intermediate vectors throughout the transformation process.

\begin{algorithm}
\caption{Fast Application of \( Q^\top \) to a Vector}\label{algo_applyQ}
\begin{algorithmic}
\State \textbf{Input:} Factor matrix $F \in {\mathrm{BPS}}_{(r,r+p),(\ell,\ell+m)}^{n \times n}$, scaling vector \( \boldsymbol{\tau} = [\tau_1, \ldots, \tau_{n-1},0]^\top \in \mathbb{R}^{n} \), right-hand side \( \mathbf{b} \in \mathbb{R}^n \).
\State \textbf{Output:} \( \mathbf{c} = Q^\top\mathbf{b} \in \mathbb{R}^n \)

\State Initialize:

\( O \gets \mathbf{0}_{n \times r} \): Storage for accumulated low-rank updates

\( G \gets \mathbf{0}_{n \times (\ell+1)} \): Storage for banded component updates

\( \mathbf{h} \gets \mathbf{0}_r \): Accumulator for semiseparable component

Let \( \mathbf{o}_i \) denote the \( i \)-th column of \( O \)

Let \( \mathbf{g}_i \) denote the \( i \)-th column of \( G \)

\State Express initial vector: \( \mathbf{b}^{(0)} = \mathbf{b} + \tilde U_0 \mathbf{h} + \sum_{j=1}^r \mathbf{o}_j + \sum_{j=1}^{l+1} \mathbf{g}_j \)

\For{\( k = 1 \) to \( n-1 \)}
    \State Compute inner product: \( c \gets \bfy_k^\top \mathbf{b}^{(k-1)} \) (exploit BPS structure of \( \bfy_k \) and precompute some lookup tables for for \( O(1) \) computation)

    \State Update low-rank storage: \( O[k,:] \gets U[k,:] \odot \mathbf{h} \) (element-wise multiplication)

    \State Update semiseparable accumulator: \( \mathbf{h} \gets \mathbf{h} - \tau_k c V_n[k,:] \)

    \State Update banded component:
    \State \( G[k,1] \gets -\tau_k c \) (diagonal contribution)
    \For{\( t = 1 \) to \( \min(\ell, n-k) \)}
        \State \( G[k+t,t+1] \gets -\tau_k c \cdot B_n[k+t,k] \) (subdiagonal contributions)
    \EndFor

    \State Current representation: \( \mathbf{b}^{(k)} = \mathbf{b} + \tilde U_k \mathbf{h} + \sum_{j=1}^r \mathbf{o}_j + \sum_{j=1}^{\ell+1} \mathbf{g}_j \)
\EndFor

\State Compute final result explicitly: \( \mathbf{c} \gets \mathbf{b} + \tilde U_{n-1} \mathbf{h} + \sum_{j=1}^r \mathbf{o}_j + \sum_{j=1}^{l+1} \mathbf{g}_j \)

\State \Return \( \mathbf{c} \)
\end{algorithmic}
\end{algorithm}

\begin{mytheorem}
Algorithm~\ref{algo_applyQ}  computes \( \mathbf{c} = Q^\top\mathbf{b} \) in \( O(n) \) operations.
\end{mytheorem}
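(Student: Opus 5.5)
The proof has two parts: correctness (the loop maintains a compressed representation of $\mathbf b^{(k)} = (I-\tau_k\bfy_k\bfy_k^\top)\cdots(I-\tau_1\bfy_1\bfy_1^\top)\mathbf b$) and an $O(1)$ cost per step. Both go by induction on $k$. The invariant I will maintain is that $\mathbf b^{(k)}$ differs from $\mathbf b$ in three ways. The first is a low-rank tail $\tilde U_k\mathbf h_k$, supported on rows $k+1\!:\!n$, with $\mathbf h_k = -\sum_{j\le k}\tau_j c_j V_n[j,:]$. The second is a ``frozen'' head, which the vectors $\mathbf o_j$ record row by row: rows $i\le k$ carry $U[i,:]\mathbf h_{i-1}$. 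The third is a sparse correction stored in $G$, nonzero in entry $k$ and at most $\ell$ entries below. The key fact, taken from Theorem~\ref{thm:structure_preserve}, is the decomposition (\ref{eq:y_structure}) $\bfy_k=\mathbf e_k+\tilde U_kV_n[k,:]+\mathbf d_k$.

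\textbf{Correctness.} Since $\bfy_k$ vanishes in rows $1\!:\!k-1$, the update $\mathbf b^{(k)}=\mathbf b^{(k-1)}-\tau_k c_k\bfy_k$ with $c_k=\bfy_k^\top\mathbf b^{(k-1)}$ splits into three terms.
\begin{itemize}
\item The term $-\tau_kc_k\tilde U_kV_n[k,:]$ is absorbed into the accumulator, giving $\mathbf h_k=\mathbf h_{k-1}-\tau_kc_kV_n[k,:]$.
\item The term $-\tau_kc_k(\mathbf e_k+\mathbf d_k)$ is exactly what is written into $G$.
\item Row $k$ of $\tilde U_{k-1}\mathbf h_{k-1}$ leaves the support of $\tilde U_k$. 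It is therefore frozen into $O[k,:]$ before $\mathbf h$ is updated, and summing the row $U[k,:]\odot\mathbf h_{k-1}$ reproduces $U[k,:]\mathbf h_{k-1}$.
\end{itemize}
These three contributions together show that the representation stated in the loop holds at step $k$. The final line of the algorithm evaluates it explicitly in $O(n(r+\ell))$ operations.

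\textbf{Cost.} The only nontrivial cost is computing $c_k=\bfy_k^\top\mathbf b^{(k-1)}$ in $O(1)$. I expect this to be the main obstacle. I will expand $c_k$ into the products of the three pieces of $\bfy_k$ with the four pieces of $\mathbf b^{(k-1)}$.
\begin{itemize}
\item Products involving $\mathbf e_k$ or $\mathbf d_k$ touch at most $\ell+1$ entries. They need $\mathbf b$, $U$, $\mathbf h_{k-1}$ and $G$ only on rows $k\!:\!k+\ell$, which costs $O((\ell+1)r)$.
\item The frozen head lives in rows $<k$, where $\bfy_k$ vanishes, so it contributes nothing.
\item $V_n[k,:]^\top\tilde U_k^\top\tilde U_{k-1}\mathbf h_{k-1}=V_n[k,:]^\top\,\text{UU\_lookup}[k+1]\,\mathbf h_{k-1}$ costs $O(r^2)$ using Remark~\ref{Remark:lookup}.
\item $V_n[k,:]^\top\tilde U_k^\top\mathbf b=V_n[k,:]^\top(U^\top\mathbf b)[k+1]$ needs a second precomputed suffix-sum table $\sum_{i\ge k+1}U[i,:]^\top\mathbf b[i]$, which costs $O(nr)$ once.
\item $V_n[k,:]^\top\tilde U_k^\top G$: the entries of $G$ in rows $>k$ were created by the previous $\ell$ steps, so only $O(\ell^2)$ entries are involved and this term is $O(\ell^2 r)$.
\end{itemize}

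The remaining updates, namely $O[k,:]$, $\mathbf h$, and the at most $\ell+1$ entries of $G$, cost $O(r+\ell)$. Summing over $n-1$ steps and adding the $O(nr)$ precomputations and the $O(n(r+\ell))$ final assembly gives $O(n)$ total, with constants polynomial in $r,p,\ell$. The care needed is in the bookkeeping for $G$: I must confirm that entries written at step $j$ land in rows $j\!:\!j+\ell$ only, so that at step $k$ the relevant window is bounded independently of $n$.
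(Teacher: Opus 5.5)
Your proof is correct and follows the paper's argument: the same induction on the representation $\mathbf b^{(k)}=\mathbf b+\tilde U_k\mathbf h+\sum_j\mathbf o_j+\sum_j\mathbf g_j$, with the Householder update split into three parts. The low-rank part is absorbed into $\mathbf h$, the row $(\tilde U_{k-1}-\tilde U_k)\mathbf h_{k-1}$ is frozen into $O[k,:]$, and the banded part $-\tau_k c_k(\mathbf e_k+\mathbf d_k)$ is written into $G$. Your accounting of the $O(1)$ cost of $c_k=\bfy_k^\top\mathbf b^{(k-1)}$ is more explicit than the paper's, which simply asserts it. In particular, you supply two details the paper leaves implicit: the suffix-sum table for $U^\top\mathbf b$ alongside UU\_lookup, and the bounded window of live entries of $G$.
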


\begin{proof}
The proof proceeds by induction on the transformation steps. Let \( \mathbf{b}^{(0)} = \mathbf{b} \) and assume that after \( k-1 \) steps, the algorithm maintains the representation:
\[
\mathbf{b}^{(k-1)} = \mathbf{b} + \tilde U_{k-1}\mathbf{h}^{(k-1)} + \sum_{j=1}^r \mathbf{o}_j^{(k-1)} + \sum_{j=1}^{\ell+1} \mathbf{g}_j^{(k-1)},
\]
where the superscripts on $\mathbf{h}$, $\mathbf{o}$, and $\mathbf{g}$ denote the state after the \( (k-1) \)-th iteration.

The \( k \)-th Householder transformation gives:
\[
\mathbf{b}^{(k)} = (I - \tau_k \bfy_k \bfy_k^\top) \mathbf{b}^{(k-1)} = \mathbf{b}^{(k-1)} - \tau_k (\bfy_k^\top \mathbf{b}^{(k-1)}) \bfy_k.
\]

Substituting the structured form of \( \bfy_k \) from \eqref{eq:y_structure} and the inductive representation:
\begin{align*}
\mathbf{b}^{(k)} &= \mathbf{b} + \tilde U_{k-1} \mathbf{h}^{(k-1)} + \sum_{j=1}^r \mathbf{o}_j^{(k-1)} + \sum_{j=1}^{\ell+1} \mathbf{g}_j^{(k-1)} \\
&\quad - \tau_k c (\bfe_1 + \tilde U_{k-1} \bar{\mathbf{v}}_k + \mathbf{d}_k) \\
&= \mathbf{b} + \tilde U_{k} (\mathbf{h}^{(k-1)} - \tau_k c \bar{\mathbf{v}}_k) \\
&\quad + \left( \sum_{j=1}^r \mathbf{o}_j^{(k-1)} + (\tilde U_{k-1} - \tilde U_k) \mathbf{h}^{(k-1)} \right) \\
&\quad + \left( \mathbf{g}_1^{(k-1)} - \tau_k c \bfe_1 \right) + \left( \sum_{j=2}^{\ell+1} \mathbf{g}_j^{(k-1)} - \tau_k c \mathbf{d}_k \right).
\end{align*}

The algorithm updates precisely these components:

\( \mathbf{h}^{(k)} = \mathbf{h}^{(k-1)} - \tau_k c \bar{\mathbf{v}}_k \),

\( O[k,:] = U_n[k,:] \odot \mathbf{h}^{(k-1)} \) captures \( (\tilde U_{k-1} - \tilde U_k) \mathbf{h}^{(k-1)} \),

Banded updates in \( G \) capture the remaining terms.

Thus, the representation is maintained correctly throughout all \( n-1 \) steps. Each step requires \( O(1) \) operations due to the constant-bounded parameters \( r, p, \ell, m \), yielding overall \( O(n) \) complexity.
\end{proof}

\subsubsection{Fast Backward Substitution}

After computing \( \mathbf{c} = Q^\top \mathbf{b} \), we solve the upper triangular system \( R\mathbf{x} = \mathbf{c} \), where \( R = \triu(F) \) inherits the BPS structure of \( F \). Specifically, the upper triangular part of \( F \) satisfies:
\[
R = B_R + \triu(W_n \tilde S^\top, 1),
\]
where \( B_R = \triu(B_n) \) is the upper triangular part of the banded component of the factor matrix, maintaining upper bandwidth \( \ell+m \).

Algorithm~\ref{algo_backsub}, which is equivalent to the one introduced in~\cite{olver2013fast}, exploits this structure to perform backward substitution in \( O(n) \) operations by maintaining a running sum for the semiseparable contributions.

\begin{algorithm}
\caption{Fast Backward Substitution for Structured \( R \)}\label{algo_backsub}
\begin{algorithmic}
\State \textbf{Input:} Upper triangular matrix \( R = \triu(F) \) in structured form; transformed right-hand side \( \mathbf{c} \in \mathbb{R}^n \)
\State \textbf{Output:} Solution \( \mathbf{x} \in \mathbb{R}^n \) satisfying \( R\mathbf{x} = \mathbf{c} \)

\State Initialize:

\( \mathbf{x} \gets \mathbf{0}_n \): solution vector

\( \mathbf{s} \gets \mathbf{0}_{r+p} \): Accumulator for semiseparable contributions

\For{\( j = n \) down to \( 1 \)}
    \State Initialize  accumulated sum: \( \text{sum} \gets 0 \)

    \State Add semiseparable contribution: \( \text{sum} \gets \text{sum} + W_n[j,:]^\top \mathbf{s} \)

    \State Add banded contributions:
    \For{\( k = j+1 \) to \( \min(j+\ell+m, n) \)}
        \State \( \text{sum} \gets \text{sum} + B_n[j,k]  \mathbf{x}[k] \)
    \EndFor

    \State Solve for \( x_j \): \( \mathbf{x}[j] \gets (\mathbf{c}[j] - \text{sum}) / B_n[j,j] \)

    \State Update semiseparable accumulator: \( \mathbf{s} \gets \mathbf{s} + \tilde S[j,:]^\top \mathbf{x}[j] \)
\EndFor

\State \Return \( \mathbf{x} \)
\end{algorithmic}
\end{algorithm}

\begin{mytheorem}
Algorithm~\ref{algo_backsub}  solves \( R\mathbf{x} = \mathbf{c} \) in \( O(n) \) operations.
\end{mytheorem}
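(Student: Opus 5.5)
We need to prove two things about Algorithm~\ref{algo_backsub}: that it returns the solution of $R\mathbf{x}=\mathbf{c}$, and that it does so in $O(n)$ operations. Correctness will be shown by a loop invariant on the accumulator $\mathbf{s}$. The starting point is the representation $R = B_R + \triu(W_n \tilde S^\top,1)$ with $B_R=\triu(B_n)$ upper banded of bandwidth $\ell+m$. Row $j$ of the system then reads
\[
B_n[j,j]\,x_j + \sum_{k=j+1}^{\min(j+\ell+m,n)} B_n[j,k]\,x_k + W_n[j,:]^\top \sum_{k=j+1}^{n} \tilde S[k,:]^\top x_k = c_j .
\]
This follows from expanding $(W_n\tilde S^\top)[j,k]=W_n[j,:]^\top\tilde S[k,:]$ and keeping only the terms with $k>j$, which is exactly what $\triu(\cdot,1)$ retains.

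The invariant is that, at the start of the iteration for index $j$,
\[
\mathbf{s}=\sum_{k=j+1}^n \tilde S[k,:]^\top x_k ,
\]
where $x_{j+1},\dots,x_n$ are the already computed (exact) solution components. We argue by downward induction on $j$.
\begin{itemize}
\item \emph{Base case, $j=n$.} Here $\mathbf{s}=\mathbf{0}$, the empty sum, so the invariant holds.
\item \emph{Inductive step.} Assume the invariant at $j$. The algorithm sets $\text{sum}$ to the semiseparable term $W_n[j,:]^\top\mathbf{s}$ plus the banded contributions, which is precisely the off-diagonal part of row $j$ above. Dividing $c_j-\text{sum}$ by $B_n[j,j]$ therefore produces the exact $x_j$. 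The final update adds $\tilde S[j,:]^\top x_j$ to $\mathbf{s}$, which restores the invariant for $j-1$.
\end{itemize}

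We also need the divisions to be well defined, i.e.\ $B_n[j,j]\neq 0$. This holds when $R$ is nonsingular, equivalently when $A$ is invertible, because $B_n[j,j]=r_{jj}$. To see this, note that the diagonal of $\triu(W_n\tilde S^\top,1)$ is zero, and the final step of Theorem~\ref{thm:structure_preserve} set $B_n[n,n]=r_{nn}$. This is the only point requiring care: we must confirm that every diagonal entry of $F$ is carried entirely by the banded part, which it is by construction (cf.\ Lemma~\ref{lemma:upper} and the assignment $B_k[k,k]=r_{kk}$ in Lemma~\ref{lemma:lower}).

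For complexity, each of the $n$ iterations costs the following:
\begin{itemize}
\item $O(r+p)$ for the inner product $W_n[j,:]^\top\mathbf{s}$;
\item $O(\ell+m)$ for the banded loop;
\item $O(1)$ for the division;
\item $O(r+p)$ for the update of $\mathbf{s}$.
\end{itemize}
The total cost is therefore $O\big(n(r+p+\ell+m)\big)$, which is $O(n)$ for fixed structural parameters. Storage consists of $\mathbf{x}$ and $\mathbf{s}$ only, beyond the $O(n)$ representation of $F$.
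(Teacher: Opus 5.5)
Your proof is correct and follows the same route as the paper's: split the off-diagonal entries of row $j$ of $R$ into the banded part $B_n[j,k]$ and the semiseparable part $W_n[j,:]^\top\tilde S[k,:]$, maintain the accumulator $\mathbf{s}=\sum_{k>j}\tilde S[k,:]^\top x_k$ by downward induction, and count $O(r+p+\ell+m)$ work per row. You also observe that $B_n[j,j]=r_{jj}$, so the divisions are well defined exactly when $A$ is invertible; the paper leaves this point implicit.
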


\begin{proof}
For completeness we include the proof from~\cite{olver2013fast}. The algorithm implements standard backward substitution while exploiting the structure of \( R \). For each index \( j \) from \( n \) down to \( 1 \), the equation:
\[
R[j,j] x_j + \sum_{k=j+1}^n R[j,k] x_k = c_j
\]
is solved for \( x_j \).

The key insight is that the off-diagonal entries \( R[j,k] \) for \( k > j \) can be decomposed as:
\[
R[j,k] = B_n[j,k] + W_n[j,:]^\top \tilde S[k,:].
\]

The banded contributions \( B_R[j,k] \) are non-zero only for \( k = j+1, \ldots, \min(j+l+m, n) \), requiring \( O(1) \) operations per row. The semiseparable contributions are accumulated in the vector \( \mathbf{s} \), which stores:
\[
\mathbf{s} = \sum_{k=j+1}^n x_k \tilde S[k,:].
\]

At step \( j \), the product \( W_n[j,:] ^\top \mathbf{s} \) thus captures all semiseparable contributions from previously computed solution components. After computing \( x_j \), the accumulator is updated to include its contribution.

Each iteration requires \( O(1) \) operations, yielding overall \( O(n) \) complexity. The correctness follows by induction from \( j = n \) down to \( 1 \).
\end{proof}

\subsubsection{Overall Solver Complexity}

Combining the QR factorization (Algorithm \ref{algo:fastqr}), the fast application of \( Q^\top \) (Algorithm~\ref{algo_applyQ}), and the fast backward substitution (Algorithm~\ref{algo_backsub}) yields a complete direct solver for BPS linear systems with \( O(n) \) complexity.

\begin{corollary}
For a BPS matrix \( A \in \mathbb{R}^{n \times n} \) with constant-bounded ranks and bandwidths, the linear system \( A\mathbf{x} = \mathbf{b} \) can be solved in \( O(n) \) operations using the QR-based approach.
\end{corollary}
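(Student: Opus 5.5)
The corollary follows by chaining the three algorithms and adding up their costs; the only real issue is making sure each piece is $O(n)$, including every precomputation. I will write the solve as $\mathbf{x} = R^{-1} Q^\top \mathbf{b}$, where $A = QR$ is the Householder factorization. By Theorem~\ref{thm:structure_preserve}, Algorithm~\ref{algo:fastqr} returns the factor matrix $F \in {\mathrm{BPS}}_{(r,r+p),(\ell,\ell+m)}^{n \times n}$ together with $\bftau$. Then $\mathbf{x}$ is obtained in two steps: first compute $\mathbf{c} = Q^\top\mathbf{b}$ with Algorithm~\ref{algo_applyQ}, then solve $R\mathbf{x} = \mathbf{c}$ with Algorithm~\ref{algo_backsub}, using $R = \triu(F)$.

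\textbf{Cost of the factorization.} The steps, in order, are as follows.
\begin{itemize}
\item The setup consists of $A^\top U$, which \cite{chandrasekaran2002fast} computes in $O(n)$, and the two lookup tables of Remark~\ref{Remark:lookup}, which cost $O(nr^2)$ and $O(nrp)$.
\item Each of the $n-1$ iterations of Algorithm~\ref{algo:fastqr} touches only a fixed number of objects: the parameter matrices $J,K,E_s,X_s,Y_s,Z_s$, whose sizes depend only on $r,p,\ell,m$, and the rows $k$ through $k+\ell+m$ of the generators.
\item Norms such as $\|F_{k-1}[k\!:\!n,k]\|$ and inner products such as $U[k\!:\!n,:]^\top \bfy_k$ are reduced, via the form \eqref{eq:tilde_b_k}, to lookup-table entries plus sums over $O(\ell)$ terms.
\end{itemize}
Each step therefore costs a polynomial in $(r,p,\ell,m)$, and the factorization is $O(n)$ in total.

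\textbf{Cost of the solve.} The two preceding theorems show that Algorithms~\ref{algo_applyQ} and~\ref{algo_backsub} each run in $O(n)$, so the solve is also $O(n)$.

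\textbf{Correctness.} The Householder factorization with the sign choice \eqref{eq:alpha_k} always exists, so $A = QR$ holds exactly. If $A$ is nonsingular, then every $r_{jj} = B_n[j,j] \neq 0$, so the division in the back substitution is well defined and $\mathbf{x} = R^{-1}Q^\top \mathbf{b}$ solves $A\mathbf{x} = \mathbf{b}$. I expect the main obstacle to be confirming that every inner product in Algorithm~\ref{algo_applyQ}, namely $\bfy_k^\top \mathbf{b}^{(k-1)}$, really costs $O(1)$. To handle this I would keep running suffix sums $\sum_{j>k} U[j,:]^\top \mathbf{b}[j]$ and similar accumulators for the $\mathbf{o}$ and $\mathbf{h}$ terms, updating each by one row per step. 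Summing the three $O(n)$ costs gives the claim.
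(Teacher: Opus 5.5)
Your proposal is correct and follows essentially the same route as the paper, which proves the corollary simply by chaining Algorithm~\ref{algo:fastqr}, Algorithm~\ref{algo_applyQ}, and Algorithm~\ref{algo_backsub} and summing their $O(n)$ costs. Your additional details make explicit what the paper leaves implicit: the $O(n)$ lookup-table precomputations, the suffix sums that make each $\bfy_k^\top \mathbf{b}^{(k-1)}$ an $O(1)$ computation, and the nonsingularity assumption, which guarantees $r_{jj} = B_n[j,j] \neq 0$ and also $\alpha_k \neq 0$ in \eqref{eq:alpha_k}, so your claim that the factorization ``always exists'' should be read under that hypothesis.
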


\section{Numerical results}
\label{sec:experiments}

To validate the theoretical complexity and demonstrate the practical efficiency of our proposed algorithms, we implemented the fast QR factorization, the complete linear solver, and the fast RQ product computation for symmetric BPS matrices in Julia. The implementation is publicly available in the {SemiseparableMatrices.jl} package~\citep{SemiseparableMatrices2024}, providing an open-source resource for the scientific computing community. Computations were carried out on a MacBook Air equipped with an Apple M2 chip (8-core CPU, 8 GB RAM), without GPU acceleration or access to external computing resources.

\subsection{Linear Complexity Verification}

To verify that the theoretical $O(n)$ complexity holds for matrices with different structural characteristics, we test four parameter sets $(\ell,m,r,p)$ representing different balances between the banded and semiseparable components. Minimal case: $(1,1,1,1)$; balanced case: $(4,5,5,4)$; band-dominated case: $(8,8,1,1)$; semiseparable-dominated case: $ (1,1,8,8)$.

\begin{figure}[htbp]
  \centering
  \begin{minipage}{0.48\textwidth}
    \centering
    \includegraphics[width=\textwidth]{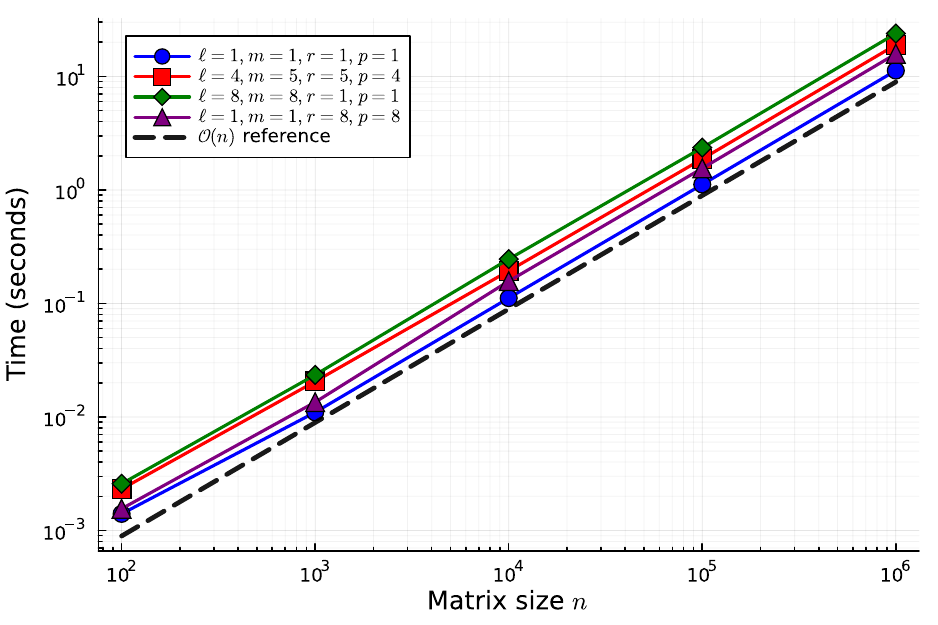}
    \vspace{0.1cm}
    \\ \textbf{(a)} QR factorization phase.
    \label{fig:qr_fact_only}
  \end{minipage}
  \hfill
  \begin{minipage}{0.48\textwidth}
    \centering
    \includegraphics[width=\textwidth]{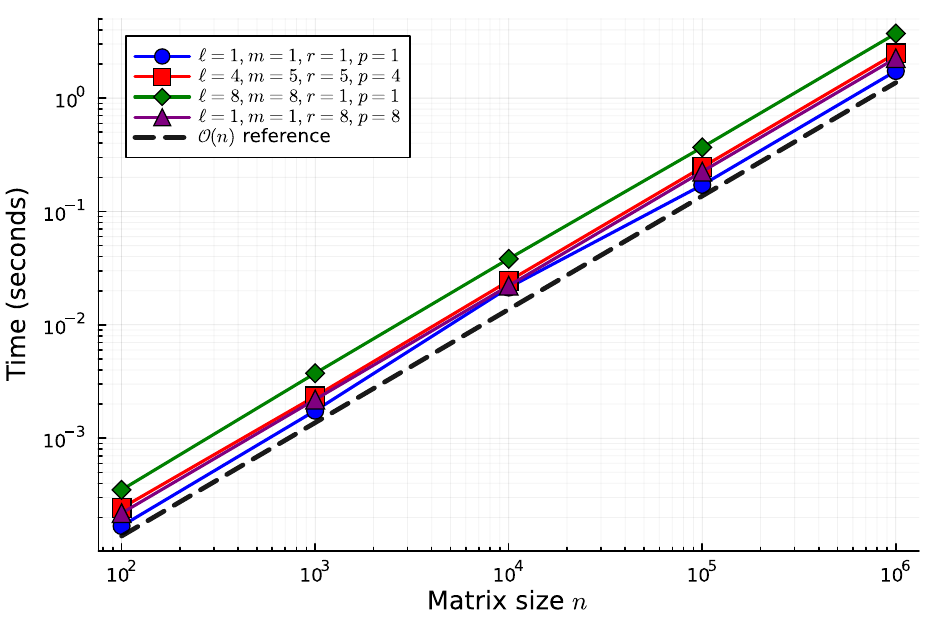}
    \vspace{0.1cm}
    \\ \textbf{(b)} solution phase.
    \label{fig:qr_solve_only}
  \end{minipage}
  \vspace{0.3cm}
  \caption{Log-log plots of execution times versus matrix size $n$ for four parameter sets $(\ell, m, r, p)$, demonstrating optimal linear complexity. (a) Execution time for the structured QR factorization; (b) Execution time for the solve phase (applying $Q^\top$ and backward substitution). The dashed lines have a slope of 1, indicating perfect linear scaling $O(n)$.}
  \label{fig:qr_scaling}
\end{figure}

Figure~\ref{fig:qr_scaling} demonstrates the linear time complexity of our complete QR solver for BPS linear systems. The execution time scales as $O(n)$ across five orders of magnitude, from $n=100$ to $n=10^6$, for four different parameter configurations. All curves align with the reference line of slope 1, confirming the analysis in Section~\ref{sec:algorithms}.

While the computational time scales as $O(n)$, the underlying constant factor depends polynomially on the structural parameters $r, p, \ell,$ and $m$. Specifically, the cost per step is dominated by updating the perturbation space $\mathcal{P}(A)$, leading to an overall complexity of $O(n \cdot (r^2 + rp + r(\ell+m) + \ell p + \ell(\ell+m)))$ for the complete solver. This strict parameter dependence accounts for the vertical offsets among the curves in Figure~\ref{fig:qr_scaling} and Figure~\ref{fig:rq_scaling}: configurations with larger semiseparable ranks or wider bandwidths exhibit an elevated constant overhead, while maintaining the identical linear scaling slope of 1.

\subsection{Numerical Stability of the QR Solver}
\label{subsec:stability}

\begin{figure}[htbp]
\centering
\includegraphics[width=0.8\textwidth]{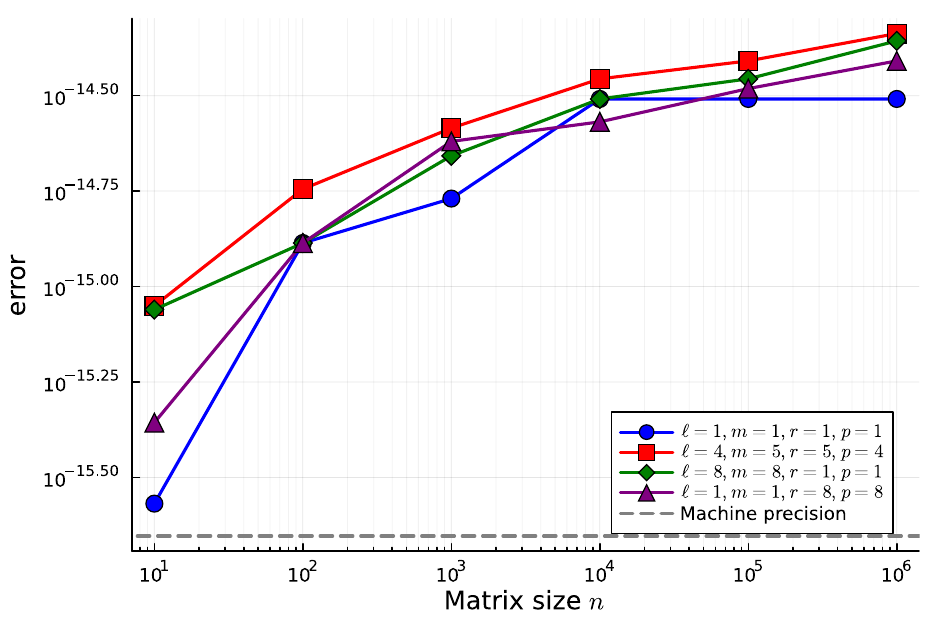}
\caption{Log-log plot  of the maximum residual $\max|A\mathbf{x} - \mathbf{b}|$ versus matrix size $n$ for four parameter sets $(\ell, m, r, p)$. Here $\mathbf{x}$ is computed by our complete QR solver (QR factorization followed by application of $Q^\top$ and backward substitution). The dashed gray line indicates the machine precision $\epsilon_{\mathrm{machine}} \approx 2.22 \times 10^{-16}$.}
\label{fig:stability}
\end{figure}

The numerical stability of a QR-based linear solver is crucial for its practical utility. Figure~\ref{fig:stability} demonstrates the excellent stability properties of our structured QR solver across a wide range of matrix sizes and structural parameters. We measure the accuracy through the maximum norm of the residual vector:
\[
\max|A\mathbf{x} - \mathbf{b}|,
\]
where $\mathbf{x}$ is the solution computed by our solver for a randomly generated right-hand side $\mathbf{b}$.

To isolate the numerical behavior of the QR factorization itself, all test matrices are constructed to be well-conditioned; more precisely, they are diagonally dominant. This ensures that the observed residuals are not influenced by ill-conditioning of the linear system, but instead reflect only the numerical errors introduced by the factorization and solution process.

For all parameter configurations and across six orders of magnitude in $n$ (from $n=10$ to $n=10^6$), the computed residuals remain very small. The growth of the error is gradual: for smaller matrices, it is on the order of machine precision ($ \epsilon_{\mathrm{machine}} \approx 2.22 \times 10^{-16}$), while for the largest matrices, the maximum error corresponds to approximately 14 digits of accuracy. Even for the most challenging case (semiseparable-dominated with $r=p=8$) and the largest $n=10^6$, the algorithm preserves a maximum residual error of less than $5 \times 10^{-15}$, demonstrating its robustness and reliable numerical stability across a wide range of matrix structures and sizes.

\begin{remark}
Measuring the accuracy of large-scale QR factorization presents a methodological challenge. Traditional approaches like comparing the factor matrix $\tilde{F}$ generated by our structured QR algorithm against the factor matrix $F_{\text{dense}}$ produced by a standard dense QR routine, or evaluating the reconstruction error $\|\tilde{Q}\tilde{R} - A\|$ where $\tilde{Q},\tilde{R}$ are extracted from $\tilde{F}$ would require $O(n^3)$ operations to compute the dense equivalent matrices and therefore becomes infeasible for $n > 10^4$.

We overcome this limitation by leveraging the structure of BPS matrices: after obtaining the solution $\mathbf{x}$ via our $O(n)$ solver, we compute the residual $A\mathbf{x} - \mathbf{b}$ using the $O(n)$ matrix-vector multiplication algorithm for BPS matrices~\citep{chandrasekaran2002fast}. This approach allows us to verify accuracy for matrices as large as $n=10^6$ while maintaining overall $O(n)$ computational cost.
\end{remark}

\subsection{Comparison with HODLR QR}

\begin{figure}[htbp]
  \centering
  \begin{minipage}{0.48\textwidth}
    \centering
    \includegraphics[width=\textwidth]{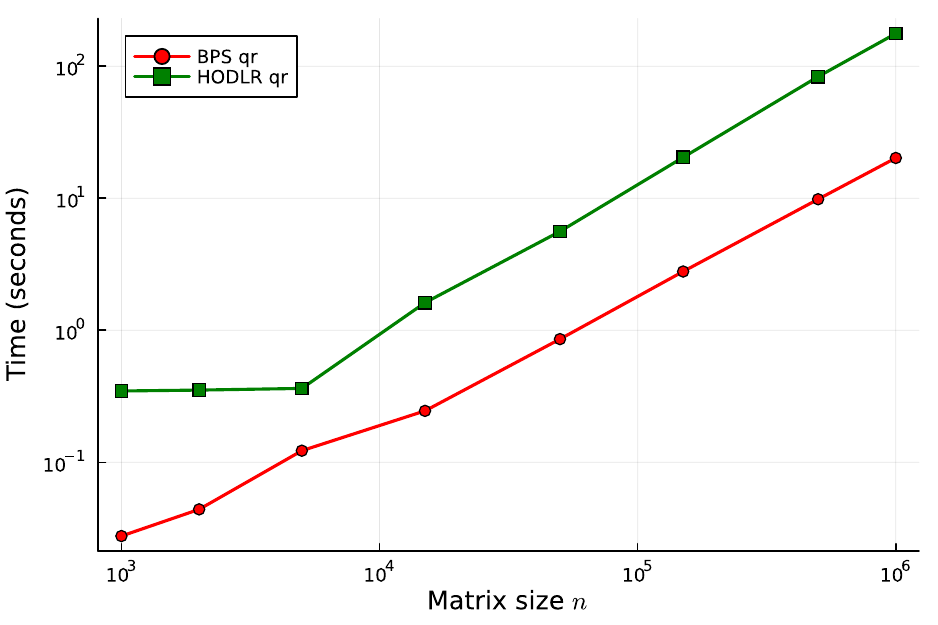}
    \vspace{0.1cm}
    \\ \textbf{(a)} QR factorization phase.
    \label{fig:qr_fact_comp}
  \end{minipage}
  \hfill
  \begin{minipage}{0.48\textwidth}
    \centering
    \includegraphics[width=\textwidth]{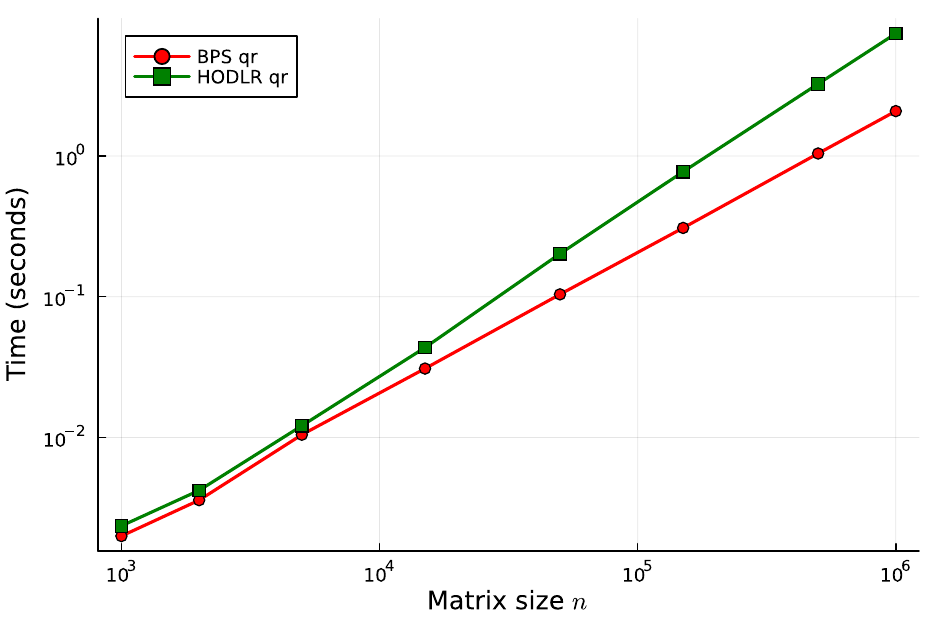}
    \vspace{0.1cm}
    \\ \textbf{(b)} solution phase.
    \label{fig:qr_solve_comp}
  \end{minipage}
  \vspace{0.3cm}
  \caption{Log-log performance comparisons between our proposed fast BPS QR algorithm and the HODLR QR implementation from~\citep{massei2020hm} (with parameters $\ell=4$, $m=5$, $r=2$, $p=3$). Panel (a) illustrates the execution times for the structured QR factorization phase; Panel (b) presents the execution times for the solve phase (applying $Q^\top$ and backward substitution).}
  \label{fig:comparison}
\end{figure}

We compare our fast QR factorization against the state-of-the-art HODLR (Hierarchically Off-Diagonal Low-Rank) QR implementation from the hm-toolbox~\citep{massei2020hm}. Hm-toolbox provides efficient MATLAB routines for various structured matrices, including HODLR and HSS matrices, and represents one of the most mature implementations for hierarchical matrix computations. 

Crucially, while general HODLR-based QR factorization algorithms typically incur an asymptotic computational complexity of $O(n \log n)$ due to their hierarchical partitioning schemes, our proposed specialized algorithm achieves an optimal linear complexity of $O(n)$ by directly preserving and tracking the BPS structure throughout the reflections.

Figure~\ref{fig:comparison} compares the execution times of QR factorization for BPS matrices obtained using the two approaches. Our algorithm consistently exhibits superior performance as the matrix size increases. This deterministic scaling advantage arises from its explicit exploitation of the underlying BPS structure, which altogether avoids the $O(n \log n)$ hierarchical overhead associated with general low-rank approximations.

Moreover, the performance gap widens with increasing $n$, confirming that our method is particularly well suited for large-scale problems. For $n = 1{,}000{,}000$, our implementation achieves an approximate $9\times$ speedup over the HODLR approach, highlighting the practical benefits of the proposed specialized algorithm.

\subsection{LAPACK Compatibility and Hybrid Solver Performance} \label{sec:lapack_comp_exps}
A key practical advantage of our Householder-based formulation is its native compatibility with standard LAPACK computational routines. To evaluate performance across different hardware utilization regimes, we benchmark three distinct algorithms for solving $Ax = b$ with dimension $n$ ranging from $500$ to $10,000$ under fixed parameters $\ell=4, m=5, r=2,$ and $p=3$. 
Specifically, we compare our structured $O(n)$ BPS algorithm against two canonical dense LAPACK implementations: 
\begin{enumerate}[(i)]
    \item the Dense Unblocked solver (level-2 BLAS), which processes Householder reflections sequentially; and 
    \item the Dense Compact WY solver (level-3 BLAS), which accumulates reflections into block representations ($Q = I + V T V^\top$) to maximize cache reuse and parallel processing.
\end{enumerate}
The execution times are separately measured for the QR factorization phase and the solve phase (applying $Q^\top$ and backward substitution), as presented in Figure~\ref{fig:lapack_compare}.

\begin{figure}[htbp]
  \centering
  \begin{minipage}{0.48\textwidth}
    \centering
    \includegraphics[width=\textwidth]{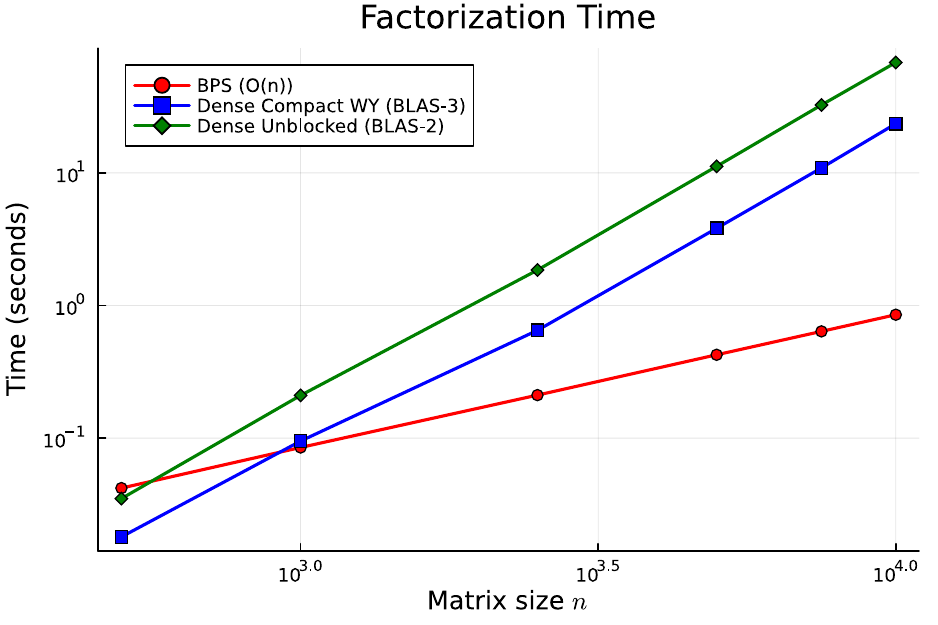}
    \vspace{0.1cm}
    \\ \textbf{(a)} QR factorization phase.
    \label{fig:lapack_qr_comp}
  \end{minipage}
  \hfill
  \begin{minipage}{0.48\textwidth}
    \centering
    \includegraphics[width=\textwidth]{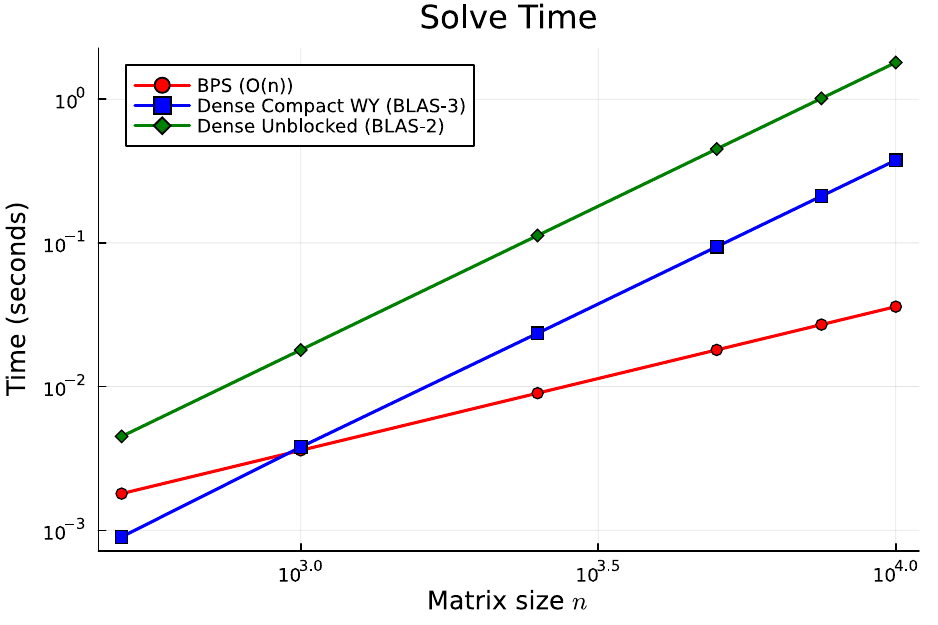}
    \vspace{0.1cm}
    \\ \textbf{(b)} Solve phase ($Q^\top b$ and backward substitution).
    \label{fig:lapack_solve_comp}
  \end{minipage}
  \vspace{0.3cm}
  \caption{Performance scaling comparison between our proposed structured $O(n)$ BPS solver and LAPACK dense solvers (both Unblocked and Compact WY BLAS-3 implementations). Panel (a) shows the execution times for the QR factorization phase; Panel (b) illustrates the execution times for the solve phase.}
  \label{fig:lapack_compare}
\end{figure}

The benchmarks highlight distinct asymptotic scaling behavior and computational efficiency between the factorization and solve phases. 

During the QR factorization phase, both dense LAPACK solvers suffer from $O(n^3)$ computational complexity. Although the Compact WY (BLAS-3) variant significantly outperforms the Unblocked (BLAS-2) variant by leveraging cache-friendly matrix-matrix multiplication, both dense approaches scale poorly as $n$ increases, requiring $23.50$ seconds and $68.00$ seconds at $n = 10,000$, respectively. In contrast, our proposed BPS factorization scales linearly as $O(n)$, executing in just $0.85$ seconds at $n = 10,000$---achieving a speedup of over $27\times$ against Compact WY and $80\times$ against the Unblocked solver.

A similar advantage appears in the solve phase. For smaller matrix sizes ($n \le 500$), the Compact WY solver achieves slightly lower latency due to highly optimized level-3 BLAS primitives operating on contiguous memory. However, as $n$ grows past $1,000$, the $O(n)$ complexity of our BPS solve phase demonstrates its asymptotic superiority over the dense $O(n^2)$ back-solvers. At $n = 10,\!000$, our BPS solver completes the solve phase in only $0.036$ seconds, outperforming the Dense Compact WY solver ($0.376$ seconds) and the Dense Unblocked solver ($1.800$ seconds) by a wide margin. 

Overall, these results confirm that our structured $O(n)$ factorization delivers theoretical linear scaling while achieving superior practical speed, outperforming highly optimized, level-3 BLAS dense routines. 

\begin{remark}
A conclusion of these timings is that on the CPU it is in practice more efficient to use the $O(n)$ BPS QR solve than to switch to a more optimized $O(n^2)$ LAPACK QR solve. However, it is also possible to compute the $O(n)$ BPS QR factorization on a CPU with GPU-based $O(n^2)$ LAPACK QR solves, which may have substantial performance benefits for moderate $n$.
\end{remark}

\section{Conclusions}
\label{sec:conclusions}

In this paper, we have established a fundamental theoretical result for BPS matrices and developed efficient algorithms based on this foundation. Our main contribution is the explicit proof that the standard QR factorization of a BPS matrix via Householder reflections preserves the BPS structure, with precisely characterized ranks and bandwidths in the resulting factor matrix. This theoretical insight enabled the design of a complete, Householder-based $O(n)$ direct solver for BPS linear systems that is natively compatible with LAPACK storage layouts, comprising: (1) a structure-preserving QR factorization algorithm (Algorithm~\ref{algo:fastqr}); (2) an efficient $O(n)$ application of $Q^\top$ (Algorithm~\ref{algo_applyQ}); and (3) a fast backward substitution routine (Algorithm~\ref{algo_backsub}).

Furthermore, for symmetric BPS matrices, we have shown that the reverse $RQ$ product also maintains the BPS structure. This establishes the algebraic closure of the BPS matrix class under symmetric orthogonal similarity transformations and provides a crucial computational building block for more complex matrix algebra. This theoretical guarantee led to the development of another $O(n)$ algorithm (Algorithm~\ref{algo:fastrq}) for computing the $RQ$ product without explicitly forming the dense orthogonal matrix $Q$.

The numerical experiments confirm the linear scaling and accuracy of our approach, while demonstrating significant performance advantages over existing HODLR-based methods. Additionally, our solver's native LAPACK compatibility allows for a highly efficient hybrid approach that bridges the gap between structured and dense solvers. Our implementation in the SemiseparableMatrices.jl package provides the scientific computing community with efficient, open-source tools for working with this important class of structured matrices.

\subsection*{Future Work}

A compelling extension involves applying our methodology to specific blocked banded matrices arising in $hp$-FEM~\citep{knook2024quasi}. These have optimal complexity so-called reverse Cholesky factorizations (Cholesky from the bottom right instead of the top left) for positive definite problems. One of our motivations for the present work is developing an optimal complexity QL factorization for these special block banded matrices, where Givens-based approaches seem to destroy the underlying structure.  The key challenge is generalizing our framework to block BPS matrices while maintaining $O(N)$ complexity. The primary difficulty lies in applying Householder transformations from one block to subsequent blocks in $O(n)$ time (where $n$ is block size and $N$ the total size), rather than $O(n^3)$. While our current framework does not directly apply, the core insight of structure preservation provides a promising foundation for this challenging extension.

\appendix
\section{BPS Structure of the Compact WY Factor $T$}
\label{sec:bps_T}

In this section, we establish that the upper triangular matrix $T \in \mathbb{R}^{n \times n}$ arising from the Compact WY representation $Q = I - V T V^\top$ inherits a BPS structure, with its structural complexity determined entirely by the lower triangular part of the original matrix. Throughout this appendix, we adopt the structural definitions, parameter notations, and structural preservation properties of $F$ established in Section~\ref{sec:main}.

\begin{mytheorem}[BPS Structure of the Compact WY Factor $T$]
Let $A \in \BPS$ be a BPS matrix with lower bandwidth $\ell$ and lower semiseparable rank $r$. The upper triangular factor matrix $T \in \mathbb{R}^{n \times n}$ generated by the Compact WY representation $Q = I - V T V^\top$ is also a BPS matrix:
\[
T \in \mathrm{BPS}_{(0, \, r+\ell), \, (0, \, 0)}^{n \times n}.
\]
Specifically, $T$ has lower/upper bandwidth $\ell_T = m_T = 0$ (its banded component is purely diagonal: $\mathbf{D}_T = \diag(\tau_1, \tau_2, \dots, \tau_{n-1}, 0)$), lower semiseparable rank $r_T = 0$, and upper semiseparable rank bounded by $p_T \le r + \ell$. That is, for any partition index $1 \le k \le n-1$, the off-diagonal subblock satisfies:
\[
\mathrm{rank}\left(T[1:k, \, k+1:n]\right) \le r + \ell.
\]
\end{mytheorem}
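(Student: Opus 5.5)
The argument combines the standard recursion for the compact WY factor with the structure of the Householder vectors. By Theorem~\ref{thm:structure_preserve}, the strictly lower part of $F$ is $\tril(B_n,-1)+\tril(UV_n^\top,-1)$. Hence, with $\bfy_k$ stored as the $k$-th column of $V$ (unit diagonal), the matrix of reflectors has the form
\[
V = I + L_B + \tril(UV_n^\top,-1),
\]
where $L_B$ is strictly lower banded with bandwidth $\ell$. This is the representation \eqref{eq:y_structure}.

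Next I use the LAPACK \texttt{DLARFT} characterization. $T$ is upper triangular with $T[k,k]=\tau_k$, and
\[
T[1\!:\!k-1,k] = -\tau_k\, T[1\!:\!k-1,1\!:\!k-1]\, V[:,1\!:\!k-1]^\top \bfy_k .
\]
Equivalently, $T^{-1} = \diag$-part plus $\triu(V^\top V,1)$, i.e.\ $T^{-1}=D^{-1}+\triu(V^\top V,1)$ on the first $n-1$ indices, where $D=\diag(\tau_k)$. I would therefore first show that the strictly upper part of $G:=V^\top V$ has semiseparable structure. Write $V=\hat B + \tril(UV_n^\top,-1)$ with $\hat B$ banded (lower bandwidth $\ell$, unit diagonal). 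For $i<j$,
\[
G[i,j]=\bfy_i^\top\bfy_j=\sum_{t\ge j}\bfy_i[t]\bfy_j[t].
\]
Here $\bfy_i[t]=\hat B[t,i]+U[t,:]V_n[i,:]$. For $t\ge j>i$, the banded part contributes only when $t\le i+\ell$, and there are $O(\ell)$ such terms. The low-rank part contributes $V_n[i,:]^\top U[j\!:\!n,:]^\top\bfy_j$, which factorizes as $V_n[i,:]^\top \cdot \bfk_j$ with $\bfk_j:=U[j\!:\!n,:]^\top\bfy_j[j\!:\!n]\in\bbR^r$. The banded remainder $\sum_{t=j}^{i+\ell}\hat B[t,i]\bfy_j[t]$ is nonzero only for $j-i\le\ell$, so it is a banded correction of upper bandwidth $\ell$. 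Thus $\triu(G,1)$ is an upper banded matrix of bandwidth $\ell$ plus $\triu(V_nK^\top,1)$, which is semiseparable of rank $r$. The off-diagonal block $T^{-1}[1\!:\!k,k+1\!:\!n]$ then has rank at most $r+\ell$, since the banded piece restricted to such a block has at most $\ell$ nonzero rows.

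Then I pass to $T$ itself. For an invertible upper triangular matrix $M=T^{-1}$ partitioned at $k$, $T[1\!:\!k,k+1\!:\!n]=-M_{11}^{-1}M_{12}M_{22}^{-1}$, whose rank is at most $\mathrm{rank}(M_{12})\le r+\ell$. This is the classical nullity/structure-rank theorem. It gives the rank bound for every $k$, so $T$ is diagonal plus upper semiseparable of rank at most $r+\ell$, with zero lower part. This is exactly $\mathrm{BPS}_{(0,r+\ell),(0,0)}$, taking generators from the rank bound via the standard equivalence between block-rank conditions and generator representations.

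The main obstacle is the index $n$. There $\tau_n=0$, so $T$ is singular, and $D^{-1}+\triu(G,1)$ is undefined in the last row and column. I would handle it by applying the argument to the leading $(n-1)\times(n-1)$ block, where $T$ is invertible since $\tau_k\neq0$. The last column of $T$ is $-\tau_n(\cdots)=0$ and the last row is zero apart from the diagonal entry, so appending zeros preserves the rank bound. Degenerate reflectors with $\tau_k=0$ would be handled similarly, or by a continuity/perturbation argument, because the set of matrices satisfying rank bounds on off-diagonal blocks is closed. The rank bound for $T$ can also be checked directly by induction from the \texttt{DLARFT} recursion: column $k$ of the block $T[1\!:\!k-1,k]$ lies in $T_{11}\cdot\mathrm{span}$ of the $r+\ell$ vectors identified above.
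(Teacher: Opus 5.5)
Your proof of the rank bound is correct. It differs from the paper's in how the structure of the Householder vectors is carried over to $T$.

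\textbf{Shared ingredient.} For $i<j$, both proofs split $\bfy_i^\top\bfy_j$ into $V_n[i,:]\cdot\bfk_j$ plus a term that vanishes unless $j-i\le\ell$. Your block $M_{12}=(V^\top V)[1\!:\!k,k\!+\!1\!:\!n]$ has as its columns exactly the paper's vectors $\mathbf{w}_{m-1}[1\!:\!k]$. Your bound $\mathrm{rank}(M_{12})\le r+\ell$ is therefore the paper's observation that these vectors lie in $\mathrm{span}[K_k\mid B^{(k)}_{\mathrm{band}}]$.

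\textbf{Different transport step.} The paper stays inside the \texttt{DLARFT} recursion and shows by induction on $m$ that each column $T[1\!:\!k,m]$ lies in $T_k\,\mathrm{span}[K_k\mid B^{(k)}_{\mathrm{band}}]$. You instead use $T^{-1}=D^{-1}+\triu(V^\top V,1)$ together with $T_{12}=-T_{11}M_{12}T_{22}$. That formula is exactly the paper's induction unrolled into closed form. Your version is shorter and gives a stronger by-product: $T^{-1}$ is itself upper-triangular banded-plus-semiseparable with explicit generators (diagonal $1/\tau_k$, upper bandwidth $\ell$, and $\triu(V_nK^\top,1)$). Consequently $T$ can be applied in $O(n)$ by structured back substitution without ever being formed. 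The price is that you need invertibility, hence your separate treatment of $\tau_n=0$. With the paper's normalization $\tau_k=2\|\bfy_k\|^{-2}>0$ for $k\le n-1$, so your continuity argument is not needed. The paper's induction never inverts anything.

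\textbf{A caution on your last step, which equally affects the paper's final sentence.} There is no general equivalence between off-diagonal rank bounds (quasiseparable structure) and a single global generator pair $\triu(T,1)=\triu(WS^\top,1)$ with $W,S\in\bbR^{n\times(r+\ell)}$. Under the paper's definition, that generator form is what membership in $\mathrm{BPS}_{(0,r+\ell),(0,0)}^{n\times n}$ means. A counterexample: let $A$ be block diagonal with $2\times 2$ blocks whose $(2,1)$ entries are nonzero, so $r=0$ and $\ell=1$.
\begin{enumerate}
\item Then $\bfy_{2t}=\mathbf{e}_{2t}$, and $T$ is block diagonal with nonzero strictly upper entries only at $(2t-1,2t)$.
\item Every block $T[1\!:\!k,k\!+\!1\!:\!n]$ therefore has rank at most $1$.
\item But any representation $\triu(T,1)=\triu(WS^\top,1)$ forces the submatrix of $WS^\top$ with rows $\{1,3,5,\dots\}$ and columns $\{2,4,6,\dots\}$ to be lower triangular with nonzero diagonal. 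Its rank, and hence that of $WS^\top$, grows like $n/2$.
\end{enumerate}
So state your conclusion as the rank bound (the theorem's ``that is''), rather than appealing to a ``standard equivalence'' between block-rank conditions and generator representations.
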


\begin{proof}
By Theorem~\ref{thm:structure_preserve}, the Householder factor matrix $F$ stores the Householder vectors $V = [v_1, v_2, \dots, v_{n-1}] \in \mathbb{R}^{n \times (n-1)}$ in its strictly lower triangular part. Because Householder reflections eliminate subdiagonal entries column by column from left to right, $V$ inherits structural properties strictly from the lower triangular portion of $A$, governed by lower bandwidth $\ell$ and lower semiseparable rank $r$ (and remains entirely independent of upper bandwidth $m$ or upper rank $p$). 

Specifically, each Householder vector $v_j \in \mathbb{R}^n$ ($1 \le j \le n-1$) satisfies $v_j[1:j-1] = \mathbf{0}$, $v_j[j] = 1$, and its remaining trailing part is decomposed as:
\[
v_j = \mathbf{e}_j + \bar{U}_j \bar{\mathbf{k}}_j + \mathbf{b}_j,
\]
where $\mathbf{e}_j \in \mathbb{R}^n$ is the $j$-th standard basis vector, $\bar{\mathbf{k}}_j \in \mathbb{R}^r$, and $\bar{U}_j \in \mathbb{R}^{n \times r}$ is a full-sized generator matrix inheriting rows $j+1$ through $n$ from $U \in \mathbb{R}^{n \times r}$ and zero elsewhere:
\[
\bar{U}_j[p, :] = \begin{cases} U[p, :], & \text{if } j+1 \le p \le n, \\ \mathbf{0}, & \text{if } 1 \le p \le j. \end{cases}
\]
The term $\mathbf{b}_j \in \mathbb{R}^n$ is a sparse banded vector supported strictly on indices $j+1$ through $\min(j+\ell, n)$, having at most $\ell$ non-zero entries.

In LAPACK's \texttt{dlarft} bordering algorithm \citep{doi:10.1137/1.9780898719604}, the factor matrix $T$ is built column by column. Let $T_j := T[1\!:\!j, \, 1\!:\!j] \in \mathbb{R}^{j \times j}$ denote the $j \times j$ leading principal submatrix of $T$ for $1 \le j \le n-1$. For step $j$ ($2 \le j \le n-1$), $T_j$ satisfies the bordered update:
\[
T_j = \begin{bmatrix} T_{j-1} & \mathbf{z}_{j-1} \\ \mathbf{0} & \tau_j \end{bmatrix},
\]
where $\mathbf{z}_{j-1} := T[1:j-1, \, j] \in \mathbb{R}^{j-1}$ represents the strictly upper triangular entries of the $j$-th column of $T$, generated by the recurrence relation:
\begin{equation} \label{eq:t_gen_recurrence}
\mathbf{z}_{j-1} = -\tau_j T_{j-1} \mathbf{w}_{j-1}, \quad \text{with } \mathbf{w}_{j-1} := V_{j-1}^\top v_j \in \mathbb{R}^{j-1},
\end{equation}
and $V_{j-1} = \begin{bmatrix} v_1 & v_2 & \dots & v_{j-1} \end{bmatrix} \in \mathbb{R}^{n \times (j-1)}$.

To characterize the inner product vector $\mathbf{w}_{j-1} \in \mathbb{R}^{j-1}$, we evaluate its $i$-th component ($1 \le i \le j-1$) by expanding $v_i$ and $v_j$:
\[
\mathbf{w}_{j-1}[i] = v_i^\top v_j = (\mathbf{e}_i + \bar{U}_i \bar{\mathbf{k}}_i + \mathbf{b}_i)^\top (\mathbf{e}_j + \bar{U}_j \bar{\mathbf{k}}_j + \mathbf{b}_j).
\]

Now, fix a partition index $k$ ($1 \le k \le n-1$) and consider the off-diagonal subblock $T[1:k, \, k+1:n]$. For any column index $m \ge k+1$, we partition $T_{m-1}$ and $\mathbf{w}_{m-1}$ at row/column $k$ into $2 \times 2$ block forms:
\[
T_{m-1} = \begin{bmatrix} T_k & T[1:k, \, k+1:m-1] \\ \mathbf{0}_{(m-1-k) \times k} & T[k+1:m-1, \, k+1:m-1] \end{bmatrix}, \quad
\mathbf{w}_{m-1} = \begin{bmatrix} \mathbf{w}_{m-1}[1:k] \\ \mathbf{w}_{m-1}[k+1:m-1] \end{bmatrix}.
\]
Substituting these partitioned matrices into equation \eqref{eq:t_gen_recurrence} and extracting the top $k$ rows yields:
\[
T[1:k, \, m] = -\tau_m \Big( T_k \, \mathbf{w}_{m-1}[1:k] + T[1:k, \, k+1:m-1] \, \mathbf{w}_{m-1}[k+1:m-1] \Big).
\]
Expanding the product $T[1:k, \, k+1:m-1] \, \mathbf{w}_{m-1}[k+1:m-1]$ as a linear combination of the columns of $T[1:k, \, k+1:m-1]$, we obtain the explicit column expression:
\begin{equation} \label{eq:subblock_col}
T[1:k, \, m] = -\tau_m T_k \, \mathbf{w}_{m-1}[1:k] - \tau_m \sum_{j=k+1}^{m-1} T[1:k, \, j] \, \mathbf{w}_{m-1}[j].
\end{equation}

For any column $m \ge k+1$, truncating the inner product vector $\mathbf{w}_{m-1}$ to its first $k$ components gives $\mathbf{w}_{m-1}[1:k] = V_k^\top v_m$. Expanding both $v_i = \mathbf{e}_i + \bar{U}_i \bar{\mathbf{k}}_i + \mathbf{b}_i$ and $v_m = \mathbf{e}_m + \bar{U}_m \bar{\mathbf{k}}_m + \mathbf{b}_m$, the $i$-th entry of $\mathbf{w}_{m-1}[1:k]$ for $1 \le i \le k < m$ evaluates to:
\[
\mathbf{w}_{m-1}[i] = v_i^\top v_m = (\mathbf{e}_i + \bar{U}_i \bar{\mathbf{k}}_i + \mathbf{b}_i)^\top (\mathbf{e}_m + \bar{U}_m \bar{\mathbf{k}}_m + \mathbf{b}_m).
\]
Since $i < m$, the terms $\mathbf{e}_i^\top \mathbf{e}_m = 0$, $\mathbf{e}_i^\top \bar{U}_m = \mathbf{0}^\top$, and $\mathbf{e}_i^\top \mathbf{b}_m = 0$ disappear because $\bar{U}_m$ and $\mathbf{b}_m$ vanish at row $i$. Expanding the remaining non-zero terms yields:
\[
\mathbf{w}_{m-1}[i] = \bar{\mathbf{k}}_i^\top \bar{U}_i^\top \mathbf{e}_m + \mathbf{b}_i^\top \mathbf{e}_m + \bar{\mathbf{k}}_i^\top \bar{U}_i^\top \bar{U}_m \bar{\mathbf{k}}_m + \bar{\mathbf{k}}_i^\top \bar{U}_i^\top \mathbf{b}_m + \mathbf{b}_i^\top \bar{U}_m \bar{\mathbf{k}}_m + \mathbf{b}_i^\top \mathbf{b}_m.
\]
Because $m \ge k+1 > i$, the generator matrix $\bar{U}_i$ inherits row $m$ as $U[m, :]$, while $\bar{U}_i^\top \bar{U}_m = \sum_{p=m+1}^n U[p, :]^\top U[p, :]$. Defining the tail Gram matrix $S_m := \sum_{p=m+1}^n U[p, :]^\top U[p, :] \in \mathbb{R}^{r \times r}$, all terms starting with $\bar{\mathbf{k}}_i$ simplify to $\bar{\mathbf{k}}_i^\top \boldsymbol{\alpha}_m$ for a vector $\boldsymbol{\alpha}_m \in \mathbb{R}^r$ that is independent of $i$. Stacking these $k$ components across $1 \le i \le k$ yields a low-rank contribution lying in $\text{span}(K_k)$, where $K_k := [\bar{\mathbf{k}}_1, \bar{\mathbf{k}}_2, \dots, \bar{\mathbf{k}}_k]^\top \in \mathbb{R}^{k \times r}$. 

Furthermore, the banded terms $\mathbf{b}_i^\top \mathbf{e}_m$, $\mathbf{b}_i^\top \bar{U}_m \bar{\mathbf{k}}_m$, and $\mathbf{b}_i^\top \mathbf{b}_m$ are non-zero only when $m - i \le \ell$, which contributes exclusively to the bottom $\min(\ell, k)$ entries of $\mathbf{w}_{m-1}[1:k]$.

We explicitly define the fixed $k$-row banded basis matrix $B_{\text{band}}^{(k)} \in \mathbb{R}^{k \times \ell}$ as the trailing $\ell$ columns of the $k \times k$ identity matrix $\mathbf{I}_k$ (padded with $\ell - k$ zero columns if $k < \ell$):
\[
B_{\text{band}}^{(k)} := \begin{bmatrix} \mathbf{0}_{k \times \max(0, \, \ell-k)} & \mathbf{e}_{\max(1, k-\ell+1)}^{(k)} & \dots & \mathbf{e}_k^{(k)} \end{bmatrix} \in \mathbb{R}^{k \times \ell},
\]
where $\mathbf{e}_p^{(k)}$ denotes the $p$-th canonical basis vector in $\mathbb{R}^k$.

Define the subspace $\mathcal{W}_k := \text{span}\left( T_k [K_k \mid B_{\text{band}}^{(k)}] \right) \subset \mathbb{R}^k$. Because $T_k \in \mathbb{R}^{k \times k}$ and $[K_k \mid B_{\text{band}}^{(k)}] \in \mathbb{R}^{k \times (r+\ell)}$ has $r+\ell$ columns, the dimension of $\mathcal{W}_k$ satisfies $\dim(\mathcal{W}_k) \le r + \ell$.

We establish by mathematical induction on $m \in \{k+1, \dots, n\}$ that every column $T[1:k, \, m] \in \mathcal{W}_k$:
\begin{enumerate}[(i)]
    \item Base case ($m = k+1$): The summation in \eqref{eq:subblock_col} is empty, giving $T[1:k, \, k+1] = -\tau_{k+1} T_k \mathbf{w}_k[1:k]$. Since $\mathbf{w}_k[1:k] = V_k^\top v_{k+1} \in \text{span}([K_k \mid B_{\text{band}}^{(k)}])$, there exists a coefficient vector $\boldsymbol{\gamma}_{k+1} \in \mathbb{R}^{r+\ell}$ such that $\mathbf{w}_k[1:k] = [K_k \mid B_{\text{band}}^{(k)}] \boldsymbol{\gamma}_{k+1}$. Left-multiplying by $-\tau_{k+1} T_k$ yields:
    \[
    -\tau_{k+1} T_k \mathbf{w}_k[1:k] = \left( T_k [K_k \mid B_{\text{band}}^{(k)}] \right) \left(-\tau_{k+1} \boldsymbol{\gamma}_{k+1}\right) \in \mathcal{W}_k.
    \]
    \item Inductive step: Assume $T[1:k, \, j] \in \mathcal{W}_k$ for all $k+1 \le j \le m-1$. In equation \eqref{eq:subblock_col}, the leading term $-\tau_m T_k \mathbf{w}_{m-1}[1:k]$ belongs to $\mathcal{W}_k$ because $\mathbf{w}_{m-1}[1:k] = V_k^\top v_m \in \text{span}([K_k \mid B_{\text{band}}^{(k)}])$, which implies $-\tau_m T_k \mathbf{w}_{m-1}[1:k] \in \text{span}\left(T_k [K_k \mid B_{\text{band}}^{(k)}]\right) = \mathcal{W}_k$. Each column $T[1:k, \, j]$ inside the summation belongs to $\mathcal{W}_k$ by the induction hypothesis, and $\mathbf{w}_{m-1}[j]$ acts as a scalar coefficient. Since $\mathcal{W}_k$ is a linear subspace, the linear combination $T[1:k, \, m]$ also lies in $\mathcal{W}_k$.
\end{enumerate}

Because all $n-k$ columns of the subblock $T[1:k, \, k+1:n]$ belong to the $(r+\ell)$-dimensional subspace $\mathcal{W}_k$, the entire subblock factorizes into an outer product of a $k \times (r+\ell)$ matrix and an $(r+\ell) \times (n-k)$ matrix. Therefore, $\mathrm{rank}\left(T[1:k, \, k+1:n]\right) \le r + \ell$, which proves that $T \in \mathrm{BPS}_{(0, \, r+\ell), \, (0, \, 0)}^{n \times n}$.
\end{proof}

\section{Fast RQ Computation for Symmetric BPS Matrices}
\label{sec:fastrq}
The fast QR factorization developed in the previous section provides an efficient direct solver for BPS linear systems. Beyond system inversion, studying the reverse $RQ$ product yields fundamental insights into the algebraic closure of the BPS matrix class under orthogonal similarity transformations, as $RQ = Q^\top A Q$ when $A$ is symmetric. Analyzing this product also establishes a key computational building block for more complex chain-structured orthogonal matrix decompositions. For symmetric BPS matrices, we show that the $RQ$ product strictly preserves the BPS structure, leading to the design of a linear-complexity algorithm for its fast computation.

\subsection{Structure of RQ}

Consider a symmetric BPS matrix $A$ of the form
\begin{equation}\label{express_A_symm}
    A = B_s + \tril(UV^\top, -1) + \triu(VU^\top, 1),
\end{equation}
where $B_s$ is a symmetric banded matrix with lower and upper bandwidth $\ell$, and $U, V \in \mathbb{R}^{n \times r}$ generate the lower and upper semiseparable parts of rank $r$. Let $A=QR$ be its QR factorization. Since $RQ = (Q^{-1}A)Q=Q^\top A Q$ and $A$ is symmetric, $RQ$ is also symmetric.

To describe the structure of intermediate matrices in the computation of $RQ$, we introduce definitions analogous to those in Section \ref{sec:main} but tailored for the symmetric case and the $R$ factor.

\begin{definition}
Denote the set of {\it symmetric banded matrices}  with both lower and upper-bandwidth $\ell$ as $\SBM \subset \bbR^{n \times n}$. In particular, $B \in \SBM \subset \bbR^{n \times n}$ if
 \( B = (b_{kj})_{k,j=1}^n \in \mathbb{R}^{n \times n} \) is a symmetric banded matrix satisfying \( b_{kj}=0 \) for \( k-j>\ell \) or \( j-k>\ell \).
\end{definition}

\begin{definition}
Denote the set of {\it symmetric banded-plus-semiseparable matrices} (SBPS) with lower and upper-semisepable rank $r$ and lower and upper-bandwidth $\ell$  as $\SBPS \subset \bbR^{n \times n}$. In particular, $A \in \SBPS \subset \bbR^{n \times n}$ if
\[
A = B_s + \tril(UV^\top, -1) + \triu(VU^\top, 1)
\]
for \( U, V \in \mathbb{R}^{n\times r} \) and \( B   \in \SBM \).
\end{definition}

Following the $QR$ factorization of $A \in \SBPS$, we let $R$ be the upper triangular factor and $ I - \tau_k \bfy_k \bfy_k^\top$ be the $k$-th Householder reflector. We define the sequence of matrices $\{R_k\}$ such that $R_0 = R$ and $R_k = R_{k-1} (I - \tau_k \bfy_k \bfy_k^\top)$ for $k=1, \dots, n-1$, leading to $R_{n-1} = RQ$. The structural properties of $R_k$ are governed by its trailing columns, which we formally describe using the following definition:

\begin{definition}
Given an upper triangular matrix $R\in \mathbb R^{n\times n}$ and $U\in \mathbb R^{n\times r}$, define the linear space:
\begin{align*}
\calH(R;U) &:= \bigg\{
RU\Omega U^\top + \Phi U^\top + RU\Psi + \Lambda :\\
&\qquad\quad  \Omega \in \mathbb{R}^{r\times r},  \Phi \in \calS_{\ell \times r}^{n \times r},  \Psi \in \calS_{r \times \ell}^{r \times n},  \Lambda \in \calS_{\ell \times \ell}^{n \times n}
\bigg\} \subset \bbR^{n \times n}.
\end{align*}
\end{definition}

For convenience we also define the nonzero portions of these matrices as $\Phi_s:=\Phi[1:\min(\ell,n),:]$, $\Psi_s:=\Psi[:,1:\min(\ell,n)]$, and $\Lambda_s:=\Lambda[1:\min(\ell,n),1:\min(\ell,n)]$.

We characterize the structural evolution of the intermediate matrices $R_k$ as follows:

\begin{definition}
We say $R_k \in {\rm SBPSR}_k(A)$ if for
\[
\tilde U :=  RU \in \bbR^{n \times r},
\]
 there exist 
 $B_k \in {\mathrm{SBM}}_{\ell}^{n \times n}$ and
$V_k \in \bbR^{n \times r}$ 
so that the first $k$ columns below the diagonal satisfy the conditions of the SBPS format:
\begin{align*}
\tril (R_k[:,1\!:\!k]) &= \tril \big((B_k + \tril(\tilde U V_k^\top, -1) + \triu(V_k \tilde U^\top,1))[:,1\!:\!k]\big) 
\end{align*}
whilst the bottom right block is a perturbed upper triangular matrix:
\begin{align*}
R_k[k\!+\!1\!:\!n,k\!+\!1\!:\!n] &= R[k+1\!:\!n,k+1\!:\!n] + H_k,
\end{align*}
where $H_k \in \calH(A[k+1\!:\!n,k+1\!:\!n];U[k+1:n,:])$.
\end{definition}

We will show that the $R_k \in {\rm SBPSR}_k(A)$ by induction.
As a preliminary step we note that the perturbation structure of the bottom right is preserved under truncation:

\begin{lemma}\label{Lemma:PpertubR}
    If $H \in \calH(R;U)$ then $H[k\!:\!n,k\!:\!n] \in \calH(R[k:n,k:n];U[k:n,:])$.
\end{lemma}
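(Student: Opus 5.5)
The plan is to expand a generic element of $\calH(R;U)$ term by term, take the principal trailing submatrix $[k\!:\!n,k\!:\!n]$ of each term, and check that each truncated term again has one of the four admissible forms, now with $R[k\!:\!n,k\!:\!n]$ and $U[k\!:\!n,:]$. As in Lemma~\ref{Lemma:Ppertub}, I would treat the case $k=2$ explicitly and obtain general $k$ by induction, since the class $\calH(R[2\!:\!n,2\!:\!n];U[2\!:\!n,:])$ has the same form with $n$ replaced by $n-1$. Equally, the argument below works directly for any $k$.

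So write $H = RU\Omega U^\top + \Phi U^\top + RU\Psi + \Lambda$. The key observation replaces the correction terms of Lemma~\ref{Lemma:Ppertub}: because $R$ is upper triangular, $R[k\!:\!n,1\!:\!k-1]=0$. Hence
\[
(RU)[k\!:\!n,:] = R[k\!:\!n,:]\,U = R[k\!:\!n,k\!:\!n]\,U[k\!:\!n,:]
\]
holds exactly, with no residual term. This is where the argument is simpler than Lemma~\ref{Lemma:Ppertub}. There, $U^\top A$ did not truncate cleanly and required the corrections via $W$ and $B$. Here the factor appearing on the left is $RU$, and triangularity of $R$ removes the leading rows entirely. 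Likewise, on the right, $U^\top[:,k\!:\!n] = U[k\!:\!n,:]^\top$.

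With these identities the four terms truncate as follows:
\begin{align*}
(RU\Omega U^\top)[k\!:\!n,k\!:\!n] &= R[k\!:\!n,k\!:\!n]U[k\!:\!n,:]\,\Omega\, U[k\!:\!n,:]^\top,\\
(\Phi U^\top)[k\!:\!n,k\!:\!n] &= \Phi[k\!:\!n,:]\,U[k\!:\!n,:]^\top,\\
(RU\Psi)[k\!:\!n,k\!:\!n] &= R[k\!:\!n,k\!:\!n]U[k\!:\!n,:]\,\Psi[:,k\!:\!n],\\
(\Lambda)[k\!:\!n,k\!:\!n] &= \Lambda[k\!:\!n,k\!:\!n].
\end{align*}
It then remains to verify the padding conditions.

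\begin{itemize}
\item $\Phi\in\calS^{n\times r}_{\ell\times r}$ is nonzero only in rows $1,\dots,\ell$. So $\Phi[k\!:\!n,:]$ is nonzero only in its first $\max(\ell-k+1,0)\le \ell$ rows, and lies in $\calS^{(n-k+1)\times r}_{\ell\times r}$.
\item Similarly, $\Psi[:,k\!:\!n]\in\calS^{r\times(n-k+1)}_{r\times\ell}$.
\item Similarly, $\Lambda[k\!:\!n,k\!:\!n]\in\calS^{(n-k+1)\times(n-k+1)}_{\ell\times\ell}$.
\item $\Omega$ is unchanged.
\end{itemize}

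Summing the four truncated terms exhibits $H[k\!:\!n,k\!:\!n]$ in the required form with parameters $(\Omega,\Phi[k\!:\!n,:],\Psi[:,k\!:\!n],\Lambda[k\!:\!n,k\!:\!n])$.

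I do not expect a genuine obstacle. The only point needing care is the first identity, i.e.\ ensuring that truncating the product $RU$ does not leave a rank-$r$ residual $R[k\!:\!n,1\!:\!k-1]U[1\!:\!k-1,:]$. This residual vanishes precisely by upper triangularity of $R$, which is the hypothesis in the definition of $\calH(R;U)$. A second point worth checking is the boundary case $n-k+1<\ell$, where the padded matrices simply become full. This is consistent with the $\min(\ell,n)$ convention adopted for $\Phi_s,\Psi_s,\Lambda_s$.
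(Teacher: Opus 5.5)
Your proof is correct and is essentially the paper's argument: the one nontrivial step is $(RU)[k\!:\!n,:]=R[k\!:\!n,k\!:\!n]\,U[k\!:\!n,:]$, which follows from upper triangularity of $R$, after which the padded parameters $\Phi,\Psi,\Lambda$ truncate and $\Omega$ is unchanged. The only difference is that the paper treats $k=2$ and inducts, while you handle general $k$ directly, which is inessential.
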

\begin{proof}
    We show for $k=2$ with the other cases following by induction. We write:
\begin{align*}
    H[2\!:\!n,2\!:\!n] = &(RU)[2:n,:]\Omega U[2:n,:]^\top   +
    \Phi[2:n,:]U[2:n,:]^\top \\&+ (RU)[2:n,:]\Psi[:,2:n] + \Lambda[2:n,2:n].
\end{align*}

Since $R$ is upper triangular, it follows that
\begin{align*}
    (RU)[2:n,:]=R[2:n,2:n]U[2:n,:],
\end{align*}

and hence
\begin{align*}
H[2\!:\!n,2\!:\!n] = &R[2:n,2:n]U[2:n,:]\Omega U[2:n,:]^\top   +
    \Phi[2:n,:]U[2:n,:]^\top \\&+ R[2:n,2:n]U[2:n,:]\Psi[:,2:n] + \Lambda[2:n,2:n] \\ \in &\calH(R[2:n,k:n];U[2:n,:]).
\end{align*}
\end{proof}

The structural form of the first $k$ columns of $R_k$ below the diagonal is characterized by the following lemma:

\begin{lemma} \label{lemma: RQ_lower}
    For $1\leq k \leq n-1$, suppose $R_{k-1}  \in {\rm SBPSR}_{k-1}(A)$. Then there exists $B_k$ and $V_k$ such that
\[
\tril(R_k[:,1:k]) = \tril\big( (B_k + \tril(\tilde U V_k^\top, -1) + \triu(V_k \tilde U^\top,1))[:,1:k]\big)
\]
\end{lemma}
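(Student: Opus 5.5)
The plan is to exploit the fact that right-multiplication by $I-\tau_k\bfy_k\bfy_k^\top$ only touches columns $k$ through $n$. Since $\bfy_k$ vanishes in rows $1,\dots,k-1$, we have $R_k[:,1\!:\!k\!-\!1]=R_{k-1}[:,1\!:\!k\!-\!1]$. So for the first $k-1$ columns we simply inherit the data from the induction hypothesis, setting $B_k[:,1\!:\!k\!-\!1]:=B_{k-1}[:,1\!:\!k\!-\!1]$ and $V_k[1\!:\!k\!-\!1,:]:=V_{k-1}[1\!:\!k\!-\!1,:]$. We also mirror these entries into the upper band so that $B_k$ stays symmetric; the mirrored entries are irrelevant under $\tril$. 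The content of the lemma is therefore the lower part of column $k$.

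For column $k$, write $\bfy:=\bfy_k[k\!:\!n]=\bfe_1+\hat\bfb_k+\hat U V_k^{(y)}$. Here $\hat\bfb_k\in\calS_{\ell+1}^{n-k+1}$ and $\hat U$ equals $U[k\!:\!n,:]$ with its first row zeroed. Both the sparse part and the low-rank coefficient are supplied by Lemma~\ref{lemma:lower} and \eqref{eq:y_k}, up to converting between the two equivalent forms of $\bfy_k$. Since $\bfy$ has first entry $1$, we get
\[
R_k[k\!:\!n,k]=R_{k-1}[k\!:\!n,k]-\tau_k\,R_{k-1}[k\!:\!n,k\!:\!n]\,\bfy .
\]
This uses $R_{k-1}[k\!:\!n,1\!:\!k\!-\!1]\,\bfy_k[1\!:\!k\!-\!1]=0$. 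By the induction hypothesis, $R_{k-1}[k\!:\!n,k\!:\!n]=R[k\!:\!n,k\!:\!n]+H_{k-1}$, with $H_{k-1}$ in the space $\calH$ built from $R[k\!:\!n,k\!:\!n]$ and $U[k\!:\!n,:]$. The statement writes $A$ there, which I read as $R$, consistent with Lemma~\ref{Lemma:PpertubR}.

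The key observation, repeatedly used, is that upper triangularity of $R$ gives $R[k\!:\!n,k\!:\!n]\,U[k\!:\!n,:]=(RU)[k\!:\!n,:]=\tilde U[k\!:\!n,:]$. We then expand term by term.
\begin{itemize}
\item $R[k\!:\!n,k\!:\!n]\,\bfe_1$ is supported on row $1$ of the block.
\item $R[k\!:\!n,k\!:\!n]\,\hat\bfb_k$ is supported on the first $\ell+1$ rows of the block, because an upper triangular matrix times a vector supported on indices $\le \ell+1$ is supported there too.
\item $R[k\!:\!n,k\!:\!n]\,\hat U V_k^{(y)}$ equals $\tilde U[k\!:\!n,:]V_k^{(y)}$ minus $R[k\!:\!n,k]\,U[k,:]V_k^{(y)}$. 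The correction is again supported on row $1$.
\item For $H_{k-1}\bfy$: the terms $RU\Omega U^\top\bfy$ and $RU\Psi\bfy$ become $\tilde U[k\!:\!n,:]$ times $r$-vectors, while $\Phi U^\top\bfy$ and $\Lambda\bfy$ are supported on the first $\ell$ rows.
\end{itemize}
Likewise, the old column $R_{k-1}[k\!:\!n,k]=R[k\!:\!n,k]+H_{k-1}[:,1]$ is $r_{kk}\bfe_1$ plus $\tilde U[k\!:\!n,:]\bigl(\Omega U[k,:]^\top+\Psi[:,1]\bigr)$ plus a vector supported on $\ell$ rows.

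Collecting terms gives $R_k[k\!:\!n,k]=\tilde U[k\!:\!n,:]\,\bfc+\bfs$, with $\bfc\in\bbR^r$ and $\bfs\in\calS_{\ell+1}^{n-k+1}$. We then define $V_k[k,:]:=\bfc$ and $B_k[k\!:\!k\!+\!\ell,k]:=\bfs[1\!:\!\ell\!+\!1]-\bigl(\tilde U[k,:]\bfc\bigr)\bfe_1$, mirroring these entries into row $k$ of $B_k$. Since $\tril(\tilde U V_k^\top,-1)$ contributes exactly $\tilde U[k\!+\!1\!:\!n,:]V_k[k,:]$ below the diagonal in column $k$, the claimed identity follows; the diagonal entry is absorbed into $B_k[k,k]$. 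The main obstacle is the bookkeeping, namely making sure every term that is not a multiple of $\tilde U$ is supported within $\ell$ rows below the diagonal. The step needing most care is $R\hat\bfb_k$ and the row-$1$ correction from $\hat U$. Both are handled by the upper-triangularity argument above, and I expect no need to invoke Lemma~\ref{Lemma:PpertubR} here beyond reading $H_{k-1}$ on the trailing block.
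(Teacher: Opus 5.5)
Your argument is essentially the paper's: you inherit the first $k-1$ columns, then use $R[k\!:\!n,k\!:\!n]\,U[k\!:\!n,:]=(RU)[k\!:\!n,:]$ to split the new column into a $\tilde U$-multiple plus a vector supported on the first $\ell+1$ rows. (The paper writes $\bfy_k=\hat\bfb_k+U[k\!:\!n,:]\bar\bfk_k$ as in \eqref{eq:y_k}, which avoids your row-one correction from $\hat U$, but that difference is cosmetic.) There is one sign slip: because $\tril(\tilde U V_k^\top,-1)$ vanishes on the diagonal, matching $R_k[k,k]=\tilde U[k,:]\mathbf{c}+\bfs[1]$ requires $B_k[k,k]:=\bfs[1]+\tilde U[k,:]\mathbf{c}$, not $\bfs[1]-\tilde U[k,:]\mathbf{c}$. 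The paper's definition $B_k[k\!:\!n,k]:=\bft_1+\delta_k\bft_2$ omits this diagonal term altogether, so your explicit treatment of it, once the sign is corrected, is worth keeping.
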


\begin{proof}
    We begin by defining the symmetric matrix $B_k$ through its lower triangular part:
    \begin{align*}
    \tril (B_k[:,1\!:\!k-1]) &:= \tril (B_{k-1}[:,1\!:\!k-1]). 
    \end{align*}

    Similarly, for $V_k$ we set
    \begin{align*}
        V_k[1:k-1,:] = V_{k-1}[1:k-1,:].
    \end{align*}

    It remains to specify $B_k[k:n,k]$ and $V_k[k,:]$. Note that the entries $B_k[k, k:n]$ are implicitly determined by the symmetry of $B_k$, and any other entries not involved in the current or previous $k$ steps of the induction may be assigned arbitrarily at this stage.

    Since $R_{k-1}  \in {\rm SBPSR}_{k-1}(A)$, we may write
    \begin{align*}
        R_{k-1}[k:n,k:n] = R[k:n,k:n] + H_{k-1}
    \end{align*}
    where
    \begin{align*}
        H_{k-1} := &R[k:n,k:n]U[k:n,:]\Omega_k U[k:n,:]^\top + \Phi_k U[k:n,:]^\top \\&+ R[k:n,k:n]U[k:n,:]\Psi_k + \Lambda_k
    \end{align*}
    with $\Omega_k \in \mathbb R^{r\times r}$, $\Phi_k \in \calS^{(n-k+1)\times r}_{\ell \times r}$, $\Psi_k\in \calS^{r\times(n-k+1)}_{r\times \ell}$, and $\Lambda_k\in \calS^{(n-k+1)\times(n-k+1)}_{\ell \times \ell}$.
    
    We now consider applying the $k$-th Householder reflection $I-\tau_k\bfy_k \bfy_k^\top$ to 
    \begin{align*}
        R_{k-1}[k:n,k:n] =& R[k:n,k:n] + R[k:n,k:n]U[k:n,:](\Omega_k U[k:n,:]^\top + \Psi_k) \\& + \Phi_k U[k:n,:]^\top + \Lambda_k.
    \end{align*}

    Since $R$ is upper triangular, we have
    \begin{align*}
        R[k:n,k:n]U[k:n,:] = (RU)[k:n,:],
    \end{align*}

    which allows us to rewrite
    \begin{align*}
        R_{k-1}[k:n,k:n] =& R[k:n,k:n] + (RU)[k:n,:]\underbrace{(\Omega_k U[k:n,:]^\top + \Psi_k)}_{:=L_1\in \mathbb R^{r\times(n-k+1)}} \\& + \underbrace{\Phi_kU[k:n,:]^\top + \Lambda_k}_{:=L_2\in \calS^{(n-k+1)\times r}_{\ell \times r}}.
    \end{align*}

    Focusing on the first column, we obtain
    \begin{align*}
        R_{k-1}[k:n,k:n] \bfe_1 = \bft_1 + (RU)[k:n,:]\underbrace{(\Omega_kU[k,:]+\Psi_k\bfe_1)}_{:=\bfr_1\in \mathbb R^r}
    \end{align*}
    where
    \begin{align*}
        \bft_1:=R[k:n,k]+\Phi_kU[k,:]+\Lambda_k[:,1] \in \calS_\ell^{n-k+1}.
    \end{align*}

    The Householder vector $\bfy_k$ can be expressed as $\bfy_k=\hat \bfb_k+U[k:n,:]\bar \bfk_k$ for some $\bar \bfk_k\in \mathbb R^r$ from Eq.(\ref{eq:y_k}), and we compute
    \begin{align*}
        R[k:n,k:n]\bfy_k&=\bft_2+R[k:n,k:n]U[k:n,:]\bar \bfk_k\\&
        =\bft_2+(RU)[k:n,:]\bar \bfk_k
    \end{align*}
    with
    \begin{align*}
        \bft_2:=R[k:n,k:n]\hat \bfb_k \in \calS_{\ell+1}^{n-k+1}.
    \end{align*}
    Hence
    \begin{align*}
        R_{k-1}[k:n,k:n]\bfy_k= \underbrace{\bft_2 + L_2\bfy_k}_{=:\bft_3\in\calS_{\ell+1}^{n-k+1}} + (RU)[k:n,:]\underbrace{(\bar \bfk_k+L_1\bfy_k)}_{=:\bfr_2\in \mathbb R^r}.
    \end{align*}

    Therefore, letting $\delta_k:=-\tau_k\bfy_k^\top\bfe_1$, applying the Householder reflection yields
    \begin{align*}
        R_{k-1}[k:n,k:n](I-\tau_k\bfy_k\bfy_k^\top)\bfe_1 = (\bft_1 + \delta_k \bft_2) + RU[k:n,:](\bfr_1+\delta_k\bfr_2)
    \end{align*}

    which leads to the definitions
    \begin{align}
        V_k[k,:] &:= \bfr_1+\delta_k\bfr_2 \in \mathbb R^r, \label{eq:rq_col_1} \\
        B_k[k:n,k]&:=\bft_1 + \delta_k \bft_2 \in \calS_{\ell+1}^{n-k+1}. \label{eq:rq_col_2}
    \end{align}
\end{proof}

The following lemma completes the inductive step by demonstrating that the trailing submatrix of $R_k$ preserves the perturbed structure defined in the space $\calH$:

\begin{lemma} \label{lemma: RQ_sub_matrix}
    For $1\leq k \leq n-1$, suppose $R_{k-1}  \in {\rm SBPSR}_{k-1}(A)$. Then $R_k \in {\rm SBPSR}_k(A)$. 
\end{lemma}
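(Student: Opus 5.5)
Lemma~\ref{lemma: RQ_lower} already provides the first $k$ columns below the diagonal, so what remains is the trailing block. I must show that $R_k[k\!+\!1\!:\!n,k\!+\!1\!:\!n]-R[k\!+\!1\!:\!n,k\!+\!1\!:\!n]$ lies in $\calH(R[k\!+\!1\!:\!n,k\!+\!1\!:\!n];U[k\!+\!1\!:\!n,:])$. Right-multiplication by $I-\tau_k\bfy_k\bfy_k^\top$ acts only on columns $k$ through $n$. Hence the new trailing block equals
\[
\big(R_{k-1}[k\!:\!n,k\!:\!n](I-\tau_k\bfy_k\bfy_k^\top)\big)[2\!:\!n-k+1,\,2\!:\!n-k+1].
\]
I split this as the truncation of $R_{k-1}[k\!:\!n,k\!:\!n]$ minus the rank-one correction $\tau_k\,(R_{k-1}[k\!:\!n,k\!:\!n]\bfy_k)(\bfy_k[2\!:\!])^\top$. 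For the first piece, Lemma~\ref{Lemma:PpertubR} applied to $H_{k-1}$ gives $R[k\!+\!1\!:\!n,k\!+\!1\!:\!n]+H^{(k)}$ with $H^{(k)}\in\calH(\cdot)$ of the required form. Its parameters $\Omega^{(k)}=\Omega_k$, $\Phi^{(k)}=\Phi_k[2\!:\!,:]$, $\Psi^{(k)}=\Psi_k[:,2\!:\!]$ and $\Lambda^{(k)}=\Lambda_k[2\!:\!,2\!:\!]$ keep their padding sizes.

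For the correction I reuse the decomposition from the proof of Lemma~\ref{lemma: RQ_lower}, namely $R_{k-1}[k\!:\!n,k\!:\!n]\bfy_k=\bft_3+(RU)[k\!:\!n,:]\bfr_2$ with $\bft_3\in\calS_{\ell+1}^{n-k+1}$. Restricting to rows $2\!:\!n-k+1$ and using upper triangularity of $R$, I get $(RU)[k\!+\!1\!:\!n,:]=R[k\!+\!1\!:\!n,k\!+\!1\!:\!n]U[k\!+\!1\!:\!n,:]$ and $\bft_3[2\!:\!]\in\calS_\ell^{n-k}$. For the row factor I use the form of $\bfy_k$ from Eq.~(\ref{eq:y_k}), $\bfy_k=\hat\bfb_k+U[k\!:\!n,:]\bar\bfk_k$. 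Its tail is $\bfy_k[2\!:\!]=\hat\bfb_k[2\!:\!]+U[k\!+\!1\!:\!n,:]\bar\bfk_k$, where $\hat\bfb_k[2\!:\!]\in\calS_\ell^{n-k}$. Expanding the outer product therefore gives four terms:
\[
\bft_3[2\!:\!]\hat\bfb_k[2\!:\!]^\top\in\calS_{\ell\times\ell},\qquad
\bft_3[2\!:\!]\bar\bfk_k^\top U[k\!+\!1\!:\!n,:]^\top\ (\Phi\text{-type}),
\]
\[
R U\,\bfr_2\hat\bfb_k[2\!:\!]^\top\ (\Psi\text{-type}),\qquad
R U\,\bfr_2\bar\bfk_k^\top U^\top\ (\Omega\text{-type}),
\]
where $R$ and $U$ in the last two terms denote the trailing blocks. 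Each of these matches one term of $\calH$.

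This yields explicit updates:
\[
\Omega_{k+1}=\Omega_k-\tau_k\bfr_2\bar\bfk_k^\top,\qquad
\Phi_{k+1}=\Phi^{(k)}-\tau_k\bft_3[2\!:\!]\bar\bfk_k^\top,
\]
\[
\Psi_{k+1}=\Psi^{(k)}-\tau_k\bfr_2\hat\bfb_k[2\!:\!]^\top,\qquad
\Lambda_{k+1}=\Lambda^{(k)}-\tau_k\bft_3[2\!:\!]\hat\bfb_k[2\!:\!]^\top.
\]
These show membership in $\calH$ and hence $R_k\in{\rm SBPSR}_k(A)$.

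The main obstacle is verifying the support bounds: that $\bft_3$ really lies in $\calS_{\ell+1}$, so that its tail has only $\ell$ possibly nonzero entries. The difficulty is the term $L_2\bfy_k$. By itself $\Phi_kU^\top\bfy_k$ lies in $\calS_\ell$, and $\Lambda_k\bfy_k$ lies in $\calS_\ell$ because $\Lambda_k$ has only $\ell$ nonzero rows. The term $\bft_2=R[k\!:\!n,k\!:\!n]\hat\bfb_k$ is fine because $R$ is upper triangular and $\hat\bfb_k\in\calS_{\ell+1}$.

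One inconsistency must be handled. The definition of $\calH$ uses $R[k\!+\!1\!:\!n,k\!+\!1\!:\!n]$, while ${\rm SBPSR}_k$ is stated with $A$ as the argument. I would read it as $R$ throughout, as the proof of Lemma~\ref{lemma: RQ_lower} already does.

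Finally, I check the initial case: $R_0=R$ with all parameters zero.
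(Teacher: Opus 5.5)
Your proposal is correct and follows the paper's proof essentially step for step: you truncate the inductive perturbation with Lemma~\ref{Lemma:PpertubR}, then expand the rank-one correction $(\bft_3+(RU)[k\!:\!n,:]\bfr_2)(\hat\bfb_k+U[k\!:\!n,:]\bar\bfk_k)^\top$ into the same four $\calH$-type terms, and you arrive at exactly the paper's updates for $\Omega_{k+1},\Phi_{k+1},\Psi_{k+1},\Lambda_{k+1}$. Your explicit check that $\bft_3\in\calS_{\ell+1}^{n-k+1}$ is consistent with what the paper does, and slightly more careful. So is your reading of $A$ as $R$ in the definition of ${\rm SBPSR}_k$.
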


\begin{proof}
    We have already established the desired structure for all entries except the bottom-right block. It therefore remains to show that
    \begin{align*}
        R_k[k\!+\!1\!:\!n,k\!+\!1\!:\!n]& :=\big(R_{k-1}[k\!:\!n,k\!:\!n](I-\tau_k\bfy_k\bfy_k^\top)\big)[2\!:\!n\!-\!k\!+\!1,2\!:\!n\!-\!k\!+\!1]\\
        &\in R[k\!+\!1\!:\!n,k\!+\!1\!:\!n] + \calH(R[k\!+\!1\!:\!n,k\!+\!1\!:\!n];U[k\!+\!1\!:\!n,:]).
    \end{align*}
    By Lemma \ref{Lemma:PpertubR}, since $R_{k-1}[k:n,k:n]\in R[k:n,k:n]+\calH(R[k:n,k:n];U[k:n,:])$, it follows that
    \begin{align*}
        R_{k-1}[k\!+\!1\!:\!n,k\!+\!1\!:\!n] &\in R[k\!+\!1\!:\!n,k\!+\!1\!:\!n]+\calH(R[k\!+\!1\!:\!n,k\!+\!1\!:\!n];U[k\!+\!1\!:\!n,:]).
    \end{align*}

    More explicitly, we may write
    \begin{align*}
        R_{k-1}[k\!+\!1\!:\!n,k\!+\!1\!:\!n]&= R[k\!+\!1\!:\!n,k\!+\!1\!:\!n] \\
        & \qquad +R[k\!+\!1\!:\!n,k\!+\!1\!:\!n]U[k\!+\!1\!:\!n,:]\Omega_k U[k\!+\!1\!:\!n,:]^\top\\
        &\qquad
        +\Phi_k[2\!:\!n\!-\!k\!+\!1,:]U[k\!+\!1\!:\!n,:]^\top\\
        &\qquad +R[k\!+\!1\!:\!n,k\!+\!1\!:\!n]U[k\!+\!1\!:\!n,:]\Psi_k[\!:\!,2\!:\!n\!-\!k\!+\!1]\\
        &\qquad +\Lambda_k[2\!:\!n\!-\!k\!+\!1,2\!:\!n\!-\!k\!+\!1].
    \end{align*}

    It thus remains to analyze the term $R_{k-1}[k:n,k:n]\bfy_k\bfy_k^\top$:

    \begin{align*}
        R_{k-1}[k\!:\!n,k\!:\!n]\bfy_k\bfy_k^\top&=\big(\bft_3+(RU)[k\!:\!n,:]\bfr_2\big)(\hat\bfb_k^\top+\bar\bfk_k^\top U[k\!:\!n,:]^\top)\\
        &=\underbrace{\bft_3\hat\bfb_k^\top}_{\in \calS^{(n-k+1)\times(n-k+1)}_{(\ell+1)\times(\ell+1)}} + \underbrace{\bft_3\bar \bfk_k^\top}_{\in\calS^{(n-k+1)\times r}_{(\ell+1)\times r}} U[k\!:\!n,:]^\top \\
        &\qquad + (RU)[k\!:\!n,:]\underbrace{\bfr_2\hat\bfb_k^\top}_{\in \calS^{r\times(n-k+1)}_{r\times(\ell+1)}} \\
        &\qquad +(RU)[k\!:\!n,:]\underbrace{\bfr_2\bar\bfk_k^\top}_{\in\mathbb R^{r\times r}} U[k\!:\!n,:]^\top.
    \end{align*}

    Using 
    \begin{align*}
        \big((RU)[k:n,:]\big)[2:n-k+1,:] = R[k+1:n,k+1:n]U[k+1:n,:],
    \end{align*}
    we can express
    \begin{align*}
        R_k[k\!+\!1\!:\!n,k\!+\!1\!:\!n]&=R_{k-1}[k\!+\!1\!:\!n,k\!+\!1\!:\!n] \\
        &\qquad -\tau_k\big(R_{k-1}[k\!:\!n,k\!:\!n]\bfy_k\bfy_k^\top)[2\!:\!n\!-\!k\!+\!1,2\!:\!n\!-\!k\!+\!1]\\
        &=
        R[k\!+\!1\!:\!n,k\!+\!1\!:\!n]\\
        &\qquad+R[k\!+\!1\!:\!n,k\!+\!1\!:\!n]U[k\!+\!1\!:\!n,:]\Omega_{k+1}U[k\!+\!1\!:\!n,:]^\top\\
        &\qquad
        +\Phi_{k+1}U[k\!+\!1\!:\!n,:]^\top\\
        &\qquad +R[k\!+\!1\!:\!n,k\!+\!1\!:\!n]U[k\!+\!1\!:\!n,:]\Psi_{k+1}+\Lambda_{k+1}
    \end{align*}
    where
    \begin{align}
        \Omega_{k+1}&:=\Omega_k-\tau_k\bfr_2\bar \bfk_k^\top \in \mathbb R^{r\times r}, \label{eq:rq_submatrix_1}\\
        \Phi_{k+1}&:=\Phi_{k}[2\!:\!n\!-\!k\!+\!1,:]-\tau_k\bft_3[2\!:\!n\!-\!k\!+\!1]\bar \bfk_k^\top \in \calS^{(n-k)\times r}_{\ell \times r}, \label{eq:rq_submatrix_2}\\
        \Psi_{k+1}&:=\Psi_{k}[:,2\!:\!n\!-\!k\!+\!1]-\tau_k\bfr_2\hat \bfb_k^\top [2\!:\!n\!-\!k\!+\!1] \in \calS^{r\times(n-k)}_{r\times \ell}, \label{eq:rq_submatrix_3}, \\
        \Lambda_{k+1}&:=\Lambda_k[2\!:\!n\!-\!k\!+\!1,2\!:\!n\!-\!k\!+\!1]-\tau_k\bft_3[2\!:\!n\!-\!k\!+\!1]\hat \bfb_k^\top[2\!:\!n\!-\!k\!+\!1] \label{eq:rq_submatrix_4}\\ &\in \calS^{(n-k)\times(n-k)}_{\ell \times \ell}.  \notag
    \end{align}
\end{proof}

Combining these inductive results, we arrive at the central theorem of this section: the $RQ$ product preserves the SBPS structure of $A$, including both the semiseparable rank $r$ and the bandwidth $\ell$:

\begin{mytheorem}\label{thm:structure_rq}
    After the QR factorization of a SBPS matrix $A\in {\mathrm{SBPS}}_{r,\ell}^{n \times n}$,
$
RQ \text{ also }\in {\mathrm{SBPS}}_{r,\ell}^{n \times n}.
$

\end{mytheorem}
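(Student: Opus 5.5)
The plan mirrors the proof of Theorem~\ref{thm:structure_preserve}: run an induction over the definition ${\rm SBPSR}_k(A)$ and then use symmetry of $RQ$ to recover the upper triangle.

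\textbf{Base case.} Take $R_0=R$. Then $R_0\in{\rm SBPSR}_0(A)$ holds trivially:
\begin{itemize}
\item there are no columns yet to constrain;
\item the bottom-right block is $R$ itself, with $\Omega_1,\Phi_1,\Psi_1,\Lambda_1$ all zero.
\end{itemize}

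\textbf{Inductive step.} Lemma~\ref{lemma: RQ_lower} builds the $k$-th column below the diagonal. Lemma~\ref{lemma: RQ_sub_matrix} then shows that the trailing block stays in $R[k+1:n,k+1:n]+\calH$. Together they give $R_{k-1}\in{\rm SBPSR}_{k-1}(A)\Rightarrow R_k\in{\rm SBPSR}_k(A)$ for $1\le k\le n-1$.

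One point needs checking here. Right-multiplying by $I-\tau_k\bfy_k\bfy_k^\top$ changes only columns $k$ through $n$, so the first $k-1$ columns of $R_k$ equal those of $R_{k-1}$. This justifies keeping $B_{k-1}$ and $V_{k-1}$ on those columns, as the lemma does.

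After $n-1$ steps we have $R_{n-1}=RQ$. Its first $n-1$ columns have lower triangle equal to that of
\[
B_{n-1}+\tril(\tilde U V_{n-1}^\top,-1)+\triu(V_{n-1}\tilde U^\top,1),
\]
and the $(n,n)$ entry lies in the $1\times1$ perturbed block. As in Theorem~\ref{thm:structure_preserve}, absorb that entry into $B_n[n,n]$.

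\textbf{From the lower triangle to the whole matrix.} At this point the whole lower triangle of $RQ$ agrees with $\tril(M)$, where
\[
M:=B_n+\tril(\tilde UV_n^\top,-1)+\triu(V_n\tilde U^\top,1).
\]
Two facts finish the argument:
\begin{itemize}
\item $RQ=Q^\top AQ$ is symmetric, because $A$ is.
\item $M$ is symmetric, because $B_n$ is symmetric banded with bandwidth $\ell$ and the two semiseparable terms are transposes of each other.
\end{itemize}
Two symmetric matrices with the same lower triangle are equal, so $RQ=M$. Since $\tilde U=RU\in\bbR^{n\times r}$, $V_n\in\bbR^{n\times r}$ and $B_n\in\SBM$, this gives $RQ\in\SBPS$.

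\textbf{Expected obstacle: consistency of $B_k$.} In Lemma~\ref{lemma: RQ_lower}, $B_k$ is specified by columns, on and below the diagonal. Its upper part is then determined by symmetry. I will check that the column assignments at different steps never conflict:
\begin{itemize}
\item Step $k$ writes only $B_k[k:n,k]$.
\item Symmetry forces $B_k[k,k:n]$, which lies in row $k$, to the right of the diagonal. Earlier steps never fix these entries, since they set $B[j:n,j]$ only for $j<k$.
\item The support bound $\bft_1+\delta_k\bft_2\in\calS_{\ell+1}^{n-k+1}$ from the lemma gives lower bandwidth $\ell$, and hence by symmetry upper bandwidth $\ell$.
\end{itemize}

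The remaining care point is that symmetry of $RQ$ is used only at the very end, never inside the induction. The induction controls only lower-triangular entries, while the upper triangle of $R_k$ in general has no such structure.
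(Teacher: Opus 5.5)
Your proposal is correct and follows the paper's proof essentially step for step: base case $R_0=R$ with all parameters zero, induction through Lemmas~\ref{lemma: RQ_lower} and~\ref{lemma: RQ_sub_matrix}, absorbing the remaining $(n,n)$ entry into $B_n[n,n]$, and finally the observation that $RQ=Q^\top AQ$ and $B_n+\tril(\tilde UV_n^\top,-1)+\triu(V_n\tilde U^\top,1)$ are both symmetric and share the same lower triangle, hence are equal. Your extra checks are details the paper leaves implicit rather than a different route: that the $k$-th reflector leaves the first $k-1$ columns untouched, and that the column-wise assignments of $B_k$ never conflict with its symmetric extension.
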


\begin{proof}
    It is easy to see that $R_0:=R\in {\rm SBPSR}_{0}(A)$ (with the parameter matrices $\Omega, \Phi, \Psi,$ and $\Lambda$ all $\mathbf{0}$). Applying Lemma \ref{lemma: RQ_lower} and \ref{lemma: RQ_sub_matrix}, it follows by induction that

\[
\tril(R_{n-1}[:,1:n-1]) = \tril \big(B_{n-1} + \tril(\tilde U V_{n-1}^\top,-1) + \triu(V_{n-1} \tilde U^\top,1)\big).
\]
Now define $B_n$ by setting $B_n[n,n]=R_{n-1}[n,n]$ and letting it coincide with $B_{n-1}$ elsewhere, and set $V_n:=V_{n-1}$. Then
\[
\tril(R_{n-1}) = \tril \big(B_{n} + \tril(\tilde U V_n^\top,-1) + \triu(V_n \tilde U^\top,1)\big).
\]  
Since $QR=R_{n-1}$ and both sides are symmetric, it follows that
\[
QR = B_n + \tril(\tilde U V_n^\top,-1) + \triu(V_n \tilde U^\top,1).
\]
\end{proof}

\subsection{Fast RQ Multiplication}

Theorem \ref{thm:structure_rq} leads directly to a fast algorithm for computing the $RQ$ product without explicitly forming the dense orthogonal matrix $Q$.

\begin{algorithm}\caption{Fast RQ computation for Symmetric BPS Matrices}\label{algo:fastrq}
\begin{algorithmic}
\State \textbf{Input:} A symmetric BPS matrix $A$ given by its generators: symmetric banded $B_s$ (bandwidth $\ell$), and $U, V \in \mathbb{R}^{n \times r}$ satisfying $A = B_s + \tril(UV^\top, -1) + \triu(VU^\top, 1)$.
\State \textbf{Output:} The $RQ$ product in structured form: symmetric banded matrix $B_n$, and low-rank generators $\tilde U$ and $V_n$.

\Statex
\State 1. \textbf{Compute fast QR factorization:}
\State \quad Run Algorithm \ref{algo:fastqr} on $A$ to obtain the structured factor matrix $F$ and coefficients $\boldsymbol{\tau}$. Extract the structured representation of the $R$ factor and all Householder vectors $\bfy_k$ (with parameters $\bar{\mathbf{k}}_k$, $\mathbf{b}_k$).
\State \quad Compute and store $\tilde U = R U$ using the structured $R$ in $O(n)$.
\Statex
\State 2. \textbf{Initialize parameters:}
\State \quad Set $\Omega \gets \mathbf{0}_{r \times r}$, $\Phi_s \gets \mathbf{0}_{\min(\ell,n) \times r}$, $\Psi_s \gets \mathbf{0}_{r \times \min(\ell,n)}$, $\Lambda_s \gets \mathbf{0}_{\min(\ell,n) \times \min(\ell,n)}$.
\State \quad Initialize $V_n \gets \mathbf{0}_{n \times r}$.
\Statex
\State 3. \textbf{Forward recursion (apply $Q$ from the right):}
\For{$k = 1$ to $n-1$}
    \State a. Retrieve the Householder parameters $\bar{\mathbf{k}}_{k}$ and $\mathbf{b}_{k}$ for step $k$.
    \State b. Update
    $\Omega \gets$ Eq.~(\ref{eq:rq_submatrix_1}),
    $\Phi_s \gets$ Eq.~(\ref{eq:rq_submatrix_2}),
    $\Psi_s \gets$ Eq.~(\ref{eq:rq_submatrix_3}),
    and $\Lambda_s \gets$ Eq.~(\ref{eq:rq_submatrix_4}).
    \State c. Set 
    $V_n[k,:] \gets $ Eq.(\ref{eq:rq_col_1}) and $B_n[k:\min(k+\ell,n),k] \gets $ Eq.(\ref{eq:rq_col_2}).
\EndFor
\Statex
\State 4. \textbf{Final State}
\State \quad Set $B_n[n,n] \gets R_{n-1}[n,n]$
\Statex
\State 5. \textbf{Return} $B_n$, $\tilde U$, $V_n$.
\end{algorithmic}
\end{algorithm}

\textbf{Complexity analysis.} Step 1, the QR factorization requires $O(n)$ operations. The computation of $\tilde U = RU$ can be performed in $O(nr)$ time due to the structure of $R$. The forward recursion in Step 3 performs a constant amount of work per iteration, as all matrix operations either involve matrices whose dimensions (e.g., $r \times r$, $r \times \ell$, $\ell \times \ell$) are independent of $n$, or involve $U[k:n,:]^\top U[k:n,:]$, which can be evaluated in $O(1)$ time after an $O(n)$ time preprocessing step to build a lookup table. Therefore, Algorithm \ref{algo:fastrq} runs in $O(n)$ time and uses $O(n)$ storage.

\begin{remark}\label{rem:symmetric_tracking}
A subtle but crucial point in Algorithm \ref{algo:fastrq} is that during the forward recursion, we track only the evolving structure of the submatrix $R_{j}[j+1:n, j+1:n]$, even though the entire matrix $R_{j}[:, j+1:n]$ is being modified. More precisely, after applying the first $j$ Householder transformations from the right, the first $j$ columns of $R_{j}$ have reached their final state and will not change in subsequent steps.
However, the first $j$ rows of $R_{j}$ (in columns $j+1:n$) are still subject to modification by later transformations.
Nevertheless, because the final product $RQ$ is known to be symmetric, these pending row entries are not independent: they must eventually equal the corresponding entries in the already-fixed columns.
This allows the algorithm to safely disregard the explicit updating of these rows and focus solely on the lower-right square submatrix, which is the only part whose future evolution is not predetermined.
\end{remark}

\begin{remark}[Perturbation and Shift Invariance]
The structural preservation results extend directly to algebraic shifts applied to the matrix. Given a scalar shift $\mu \in \mathbb{R}$, the shifted matrix $A - \mu I = (B_s - \mu I) + \tril(UV^\top, -1) + \triu(VU^\top, 1)$ remains a symmetric BPS matrix, requiring only a standard diagonal correction to the banded part $B$. The resulting factored product can then be adjusted back to obtain $RQ + \mu I$, which naturally inherits the identical BPS formatting boundaries. Hence, the algebraic closure of the BPS class under these linear translations remains intact and can be computed entirely within $O(n)$ time.
\end{remark}

Figure~\ref{fig:rq_scaling} confirms the $O(n)$ complexity of computing the $RQ$ product for symmetric BPS matrices. The four parameter sets exhibit linear scaling, with all curves scaling parallel to the $O(n)$ reference line, thereby validating the algebraic and complexity analysis presented in this Appendix.

\begin{figure}[htbp]
  \centering
  \begin{minipage}{0.48\textwidth}
    \centering
    \includegraphics[width=\textwidth]{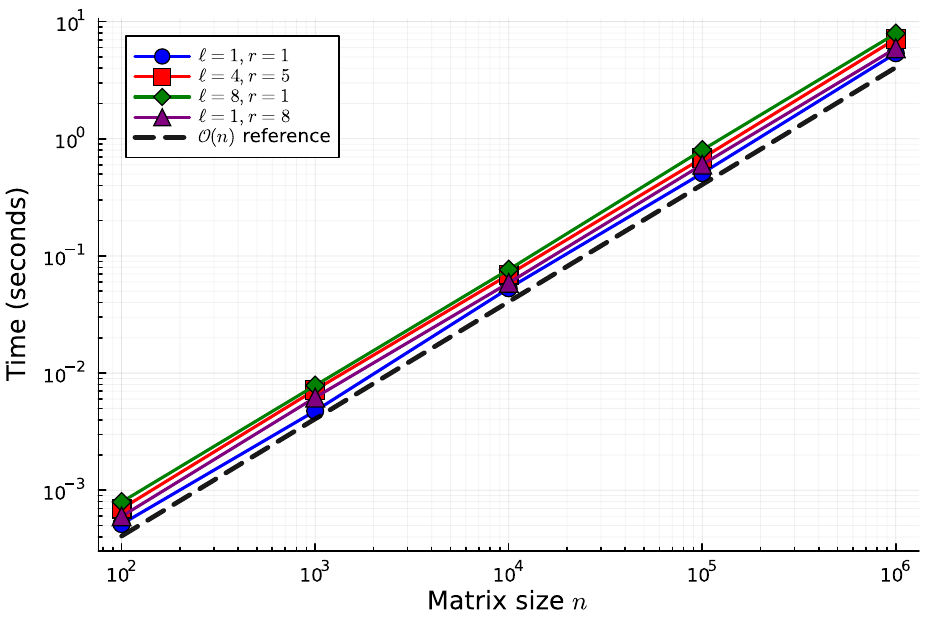}
    \vspace{0.1cm}
    \\ \textbf{(a)} Isolated $RQ$ product loop.
    \label{fig:only_rq}
  \end{minipage}
  \hfill
  \begin{minipage}{0.48\textwidth}
    \centering
    \includegraphics[width=\textwidth]{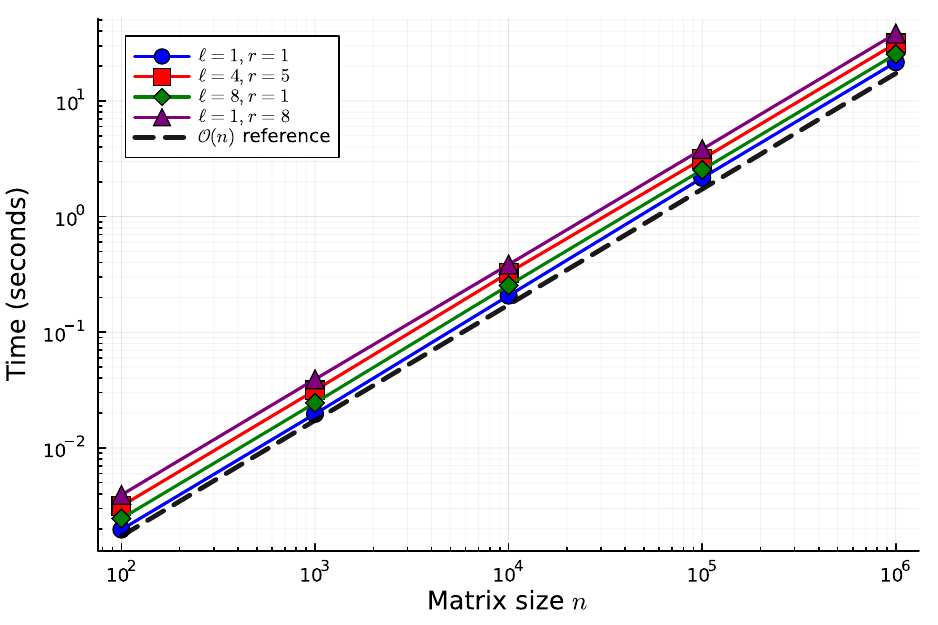}
    \vspace{0.1cm}
    \\ \textbf{(b)} Combined $QR$ and $RQ$.
    \label{fig:qr_plus_rq}
  \end{minipage}
  \vspace{0.3cm}
  \caption{Log-log plots of computation times for symmetric BPS matrices versus $n$ across four distinct parameter sets $(\ell, r)$. The left panel (a) illustrates the isolated $RQ$ computation cost, while the right panel (b) presents the cumulative execution time encompassing both the complete structured $QR$ factorization and the subsequent $RQ$ product. The dashed reference lines possess a slope of 1, indicating strict $O(n)$ linear scaling for both components.}
  \label{fig:rq_scaling}
\end{figure}

\printcredits

\section*{Declaration of Generative AI in the writing process}

During the preparation of this work, the authors used AI-assisted technologies (Gemini) solely for English language editing, grammatical proofreading, and improving text readability. The technology was not used to produce code, execute numerical simulations, perform mathematical derivations, or generate scientific data. After using these tools, the authors reviewed and edited the content as needed and take full responsibility for the content of the publication.

\bibliographystyle{cas-model2-names}

\bibliography{cas-refs}

@article{knook2024quasi,
  title={Quasi-optimal complexity $ hp $-{FEM} for {Poisson} on a rectangle},
  author={Knook, Kars and Olver, Sheehan and Papadopoulos, Ioannis},
  journal={IMA Journal of Numerical Analysis},
  pages={draf102},
  year={2025}
}

@article{olver2013fast,
  title={A fast and well-conditioned spectral method},
  author={Olver, Sheehan and Townsend, Alex},
  journal={SIAM Review},
  volume={55},
  number={3},
  pages={462--489},
  year={2013},
  publisher={SIAM}
}

@misc{OlverNIST,
  title ={{NIST} {D}igital {L}ibrary of {M}athematical {F}unctions},
  howpublished ={http://dlmf.nist.gov/, Release 1.1.4 of 2022-01-15},
  url ={http://dlmf.nist.gov/},
  author={F.~W.~J. Olver and A.~B. {Olde Daalhuis} and D.~W. Lozier and B.~I. Schneider and
                R.~F. Boisvert and C.~W. Clark and B.~R. Miller and B.~V. Saunders and
                H.~S. Cohl and M.~A. McClain}
}

@article{chandrasekaran2003fast,
  title={Fast and stable algorithms for banded plus semiseparable systems of linear equations},
  author={Chandrasekaran, Shiv and Gu, Ming},
  journal={SIAM Journal on Matrix Analysis and Applications},
  volume={25},
  number={2},
  pages={373--384},
  year={2003},
  publisher={SIAM}
}

@article{chandrasekaran2006fast,
  title={A fast {ULV} decomposition solver for hierarchically semiseparable representations},
  author={Chandrasekaran, Shiv and Gu, Ming and Pals, Timothy},
  journal={SIAM Journal on Matrix Analysis and Applications},
  volume={28},
  number={3},
  pages={603--622},
  year={2006},
  publisher={SIAM}
}

@article{xia2010fast,
  title={Fast algorithms for hierarchically semiseparable matrices},
  author={Xia, Jianlin and Chandrasekaran, Shivkumar and Gu, Ming and Li, Xiaoye S},
  journal={Numerical Linear Algebra with Applications},
  volume={17},
  number={6},
  pages={953--976},
  year={2010},
  publisher={Wiley Online Library}
}

@article{vandebril2005implicit,
  title={An implicit {QR} algorithm for symmetric semiseparable matrices},
  author={Vandebril, Raf and Van Barel, Marc and Mastronardi, Nicola},
  journal={Numerical Linear Algebra with Applications},
  volume={12},
  number={7},
  pages={625--658},
  year={2005},
  publisher={Wiley Online Library}
}

@article{van2004two,
  title={Two fast algorithms for solving diagonal-plus-semiseparable linear systems},
  author={Van Camp, Ellen and Mastronardi, Nicola and Van Barel, Marc},
  journal={Journal of Computational and Applied Mathematics},
  volume={164},
  pages={731--747},
  year={2004},
  publisher={Elsevier}
}

@article{delvaux2008qr,
  title={A {QR}-based solver for rank structured matrices},
  author={Delvaux, Steven and Van Barel, Marc},
  journal={SIAM Journal on Matrix Analysis and Applications},
  volume={30},
  number={2},
  pages={464--490},
  year={2008},
  publisher={SIAM}
}

@article{chandrasekaran2007fast,
  title={A fast solver for {HSS} representations via sparse matrices},
  author={Chandrasekaran, Shiv and Dewilde, Patrick and Gu, Ming and Lyons, William and Pals, Timothy},
  journal={SIAM Journal on Matrix Analysis and Applications},
  volume={29},
  number={1},
  pages={67--81},
  year={2007},
  publisher={SIAM}
}

@article{massei2020hm,
  title={hm-toolbox: {MATLAB} software for {HODLR} and {HSS} matrices},
  author={Massei, Stefano and Robol, Leonardo and Kressner, Daniel},
  journal={SIAM Journal on Scientific Computing},
  volume={42},
  number={2},
  pages={C43--C68},
  year={2020},
  publisher={SIAM}
}

@article{vandebril2005note,
  title={A note on the representation and definition of semiseparable matrices},
  author={Vandebril, Raf and Van Barel, Marc and Mastronardi, Nicola},
  journal={Numerical Linear Algebra with Applications},
  volume={12},
  number={8},
  pages={839--858},
  year={2005},
  publisher={Wiley Online Library}
}

@article{delvaux2008givens,
  title={A {Givens-weight} representation for rank structured matrices},
  author={Delvaux, Steven and Van Barel, Marc},
  journal={SIAM Journal on Matrix Analysis and Applications},
  volume={29},
  number={4},
  pages={1147--1170},
  year={2008},
  publisher={SIAM}
}

@article{vandebril2008rational,
  title={Rational {QR}-iteration without inversion},
  author={Vandebril, Raf and Van Barel, Marc and Mastronardi, Nicola},
  journal={Numerische Mathematik},
  volume={110},
  number={4},
  pages={561--575},
  year={2008},
  publisher={Springer}
}

@article{mastronardi2001fast,
  title={Fast and stable two-way algorithm for diagonal plus semi-separable systems of linear equations},
  author={Mastronardi, Nicola and Chandrasekaran, Shivkumar and Van Huffel, Sabine},
  journal={Numerical linear algebra with applications},
  volume={8},
  number={1},
  pages={7--12},
  year={2001},
  publisher={Wiley Online Library}
}

@article{eidelman1997inversion,
  title={Inversion formulas and linear complexity algorithm for diagonal plus semiseparable matrices},
  author={Eidelman, Y and Gohberg, I},
  journal={Computers \& Mathematics with Applications},
  volume={33},
  number={4},
  pages={69--79},
  year={1997},
  publisher={Elsevier}
}

@article{eidelman2005qr,
  title={The {QR} iteration method for {Hermitian} quasiseparable matrices of an arbitrary order},
  author={Eidelman, Yuli and Gohberg, Israel and Olshevsky, Vadim},
  journal={Linear Algebra and its Applications},
  volume={404},
  pages={305--324},
  year={2005},
  publisher={Elsevier}
}

@article{fasino2005rational,
  title={Rational {Krylov} matrices and {QR} steps on {Hermitian} diagonal-plus-semiseparable matrices},
  author={Fasino, Dario},
  journal={Numerical linear algebra with applications},
  volume={12},
  number={8},
  pages={743--754},
  year={2005},
  publisher={Wiley Online Library}
}

@article{delvaux2006rank,
  title={Rank structures preserved by the {QR}-algorithm: the singular case},
  author={Delvaux, Steven and Van Barel, Marc},
  journal={Journal of Computational and Applied Mathematics},
  volume={189},
  number={1-2},
  pages={157--178},
  year={2006},
  publisher={Elsevier}
}

@inproceedings{chandrasekaran2002fast,
  title={Fast stable solver for sequentially semi-separable linear systems of equations},
  author={Chandrasekaran, Shiv and Dewilde, Patrick and Gu, Ming and Pals, T and van der Veen, Alle-Jan},
  booktitle={International Conference on High-Performance Computing},
  pages={545--554},
  year={2002},
  organization={Springer}
}

@article{delvaux2006structures,
  title={Structures preserved by the {QR}-algorithm},
  author={Delvaux, Steven and Van Barel, Marc},
  journal={Journal of Computational and Applied Mathematics},
  volume={187},
  number={1},
  pages={29--40},
  year={2006},
  publisher={Elsevier}
}

@article{iserles2025stable,
  title={Stable spectral methods for time-dependent problems and the preservation of structure},
  author={Iserles, Arieh},
  journal={Foundations of Computational Mathematics},
  volume={25},
  number={2},
  pages={683--723},
  year={2025},
  publisher={Springer}
}

@book{doi:10.1137/1.9780898719604,
author = {Anderson, E. and Bai, Z. and Bischof, C. and Blackford, L. S.  and Demmel, J. and Dongarra, J. and Du Croz, J. and Greenbaum, A. and Hammarling, S. and McKenney, A. and Sorensen, D.},
title = {LAPACK Users' Guide},
publisher = {Society for Industrial and Applied Mathematics},
year = {1999},
edition   = {Third},
}

@article{rozsa1991band,
  title={On band matrices and their inverses},
  author={R{\'o}zsa, P{\'a}l and Bevilacqua, Roberto and Romani, Francesco and Favati, Paola},
  journal={Linear Algebra and Its Applications},
  volume={150},
  pages={287--295},
  year={1991},
  publisher={Elsevier}
}

@software{SemiseparableMatrices2024,
  author       = {Chen, Tao and Olver, Sheehan  and others},
  title        = {{SemiseparableMatrices.jl}},
  year         = {2026},
  url          = {https://github.com/JuliaLinearAlgebra/SemiseparableMatrices.jl}
}

@book{vandebril2008matrix,
  title={Matrix Computations and Semiseparable Matrices: Linear Systems},
  author={Vandebril, Raf and Van Barel, Marc and Mastronardi, Nicola},
  volume={1},
  year={2008},
  publisher={JHU Press}
}

@book{eidelman2014separable,
  title={Separable Type Representations of Matrices and Fast Algorithms},
  author={Eidelman, Yuli and Gohberg, Israel and Haimovici, Iulian},
  volume={1},
  year={2014},
  publisher={Springer}
}

@article{vandebrilmanual,
  title={Manual for {SSPACK}: the semiseparable software package},
  author={Vandebril, Raf and Van Barel, Marc},
  year={2010}
}

@article{colbrook2021computing,
  title={Computing spectral measures of self-adjoint operators},
  author={Colbrook, Matthew and Horning, Andrew and Townsend, Alex},
  journal={SIAM review},
  volume={63},
  number={3},
  pages={489--524},
  year={2021},
  publisher={SIAM}
}

@techreport{schreiber1987storage,
  title={A storage efficient WY representation for products of Householder transformations},
  author={Schreiber, Robert S and Van Loan, Charles},
  year={1987},
  institution={Cornell University}
}

\end{document}